\definecolor{citecol}{RGB}{145, 1, 1}
\theoremstyle{plain}
\newtheorem {lemma}{Lemma}[section] 
\newtheorem {theorem}[lemma]{Theorem}
\newtheorem {thm}[lemma]{Theorem}
\newtheorem {corollary}[lemma]{Corollary}
\newtheorem {cor}[lemma]{Corollary}
\newtheorem {prop}[lemma]{Proposition}
\theoremstyle{definition}
\newtheorem {example}[lemma]{Example}
\newtheorem {remdef}[lemma]{Remark-Definition}
\theoremstyle{definition}
\newtheorem{deff}[lemma]{Definition}{}
\newtheorem{conj}[lemma]{Conjecture}
\newcommand{\gr}{\operatorname{gr}}
\newcommand{\im}{\operatorname{im}}
\begin{document}

\title[Williams' conjecture]{WILLIAMS' CONJECTURE HOLDS FOR GRAPHS OF GELFAND-KIRILLOV DIMENSION THREE}

\author{Tran Quang Do}
\address{Institute of Mathematics, VAST, 18 Hoang Quoc Viet, Cau Giay, Hanoi, Vietnam}
\email{tqdo@math.ac.vn}

\author{Roozbeh Hazrat}\address{
Centre for Research in Mathematics and Data Science\\
Western Sydney University\\
Australia}
\email{r.hazrat@westernsydney.edu.au}

\author{Tran Giang  Nam}
\address{Institute of Mathematics, VAST, 18 Hoang Quoc Viet, Cau Giay, Hanoi, Vietnam}
\email{tgnam@math.ac.vn}



\begin{abstract}
A graph of Gelfand-Kirillov dimension three is a connected finite essential graph such that its Leavitt path algebra has Gelfand-Kirillov dimension three. We provide number-theoretic criteria for graphs of Gelfand-Kirillov dimension three to be strong shift equivalent. We then prove that two graphs of Gelfand-Kirillov dimension three are shift equivalent if and only if they are strongly shift equivalent, if and only if their corresponding Leavitt path algebras are graded Morita equivalent, if and only if their graded $K$-theories, $K^{\gr}_0$, are order-preserving $\mathbb{Z}[x, x^{-1}]$-module isomorphic. As a consequence, we obtain that the Leavitt path algebras of  graphs of Gelfand-Kirillov dimension three are graded Morita equivalent if and only if their graph $C^*$-algebras are equivariant Morita equivalent, and  two graphs $E$ and $F$ of Gelfand-Kirillov dimension three are shift equivalent if and only if the singularity categories $\text{D}_{\text{sg}}(KE/J_E^2)$ and $\text{D}_{\text{sg}}(KF/J_F^2)$ are triangulated equivalent.
\medskip

\textbf{Mathematics Subject Classifications 2020}: 16S88, 37B10, 16D25, 46L35 

\medskip

\textbf{Key words}: Strong shift equivalent; Shift equivalent; Graded Grothendieck group; Talented monoid; Leavitt path algebra; graph $C^*$-algebra, Graded classification Conjecture. 
\end{abstract}


\maketitle

\section{Introduction} \label{introkai}

One of the fundamental objects of study in symbolic dynamics is called a {\it shift of finite type}, which consists of
sequences indexed by $\mathbb{Z}$ of symbols chosen from a finite set, that do not include certain ``forbidden" finite sequences,
equipped with a shift map which creates the dynamical behaviour. The applications abound, from topological quantum
field theory, ergodic theory, and statistical mechanics to coding and information theory \cite{lindmarcus}. An isomorphism
between two shifts of finite type is called a {\it conjugacy}. Up to conjugacy, every shift of finite type arises from an {\it essential graph} - that is, a finite connected directed graph $E$ with neither sources nor sinks \cite{lindmarcus}. The shift space $X_E$ associated to the graph $E$ is given by the set of bi-infinite paths in $E$, and the natural shift of the paths to the left. This is called an {\it edge shift}.

Determining whether two shifts of finite type $X_E$ and $X_F$ are conjugate is in general a difficult problem, because it requires knowledge of all possible bi-infinite paths in $E$ and $F$. In his seminal paper \cite{williams}, Williams introduced the notions of {\it strong shift equivalence} (SSE) and {\it shift equivalence} (SE), which are more tractable. Williams \cite{williams} showed that two subshifts of finite type $X_E$ and $X_F$ are conjugate  if and only if the adjacency matrices of $E$ and $F$  are strong shift equivalent, if and only if $E$ can be obtained from $F$ by a sequence of in-splittings, out-splittings,  in-amalgamations, and out-amalgamations (see Theorem \ref{willimove} below).

Shift equivalence is a weaker equivalence relation than SSE, and is also more computable. Williams originally asserted in \cite{williams} that SE is equivalent to SSE. Although he identified a flaw in the proof of the direction ``SE implies SSE" a year later \cite{willwrong}, and the above assertion is known as the Williams' conjecture (see Conjecture \ref{Wilconj} below). It took 25 years before the Williams' conjecture was disproved, through counterexamples found by Kim and Roush in \cite{kimroush99}. However, identifying  classes of edge shifts for which SE and SSE are equivalent is an open problem. Even in the case of graphs with two vertices, examples are abound \cite[Example 7.3.13]{lindmarcus} (see Example \ref{SE-SSE-Exam} below). Considering how difficult this is, anything new we can say about
strong shift equivalence while merely assuming shift equivalence is bound to be very interesting. 

Very recently, Cordeiro, Gillaspy, Gon\c{c}alves and the second author \cite{CGGH} proved that Williams' conjecture holds for the class of meteor graphs -- That is, an essential graph consisting of two disjoint cycles and the paths connecting these cycles.  The second author and Pacheco \cite{HP2024} showed that Williams' conjecture holds for the class of essential graphs with three vertices, no parallel edges with no trivial hereditary and saturated subsets.

The aim of this article is to investigate Williams' conjecture for the class of graphs of finite Gelfand-Kirillov dimension. It is well-known that the Gelfand-Kirillov dimension is a power tool to investigate infinite dimensional algebras. In \cite{aajz:lpaofgkd}  Alahmadi, Alsulami, Jain and Zelmanov provided that the Gelfand-Kirillov dimension of the Leavitt path algebra $L(E)$ of a finite graph $E$ is finite if and only if $E$ is a graph with disjoint cycles. In this case,  the Gelfand-Kirillov dimension of the Leavitt path algebra $L_K(E)$, with coefficient in a field $K$,  is a natural number. In particular,  the Gelfand-Kirillov dimension of the Leavitt path algebra of an essential graph is odd number. Motivated by this nice result, we say that a {\it graph of finite Gelfand-Kirillov dimension} is an essential graph for which its Leavitt path algebra has finite Gelfand-Kirillov dimension. In this article, we prove that Williams' conjecture holds for the class of graphs of Gelfand-Kirillov dimension three--It contains the class of meteor graphs as a special case. We refer to Remark-Definition \ref{rem-gwGK3} for an explicit description of graphs of Gelfand-Kirillov dimension three. 

Several tools we use are well known, namely, Williams' graph moves and Krieger's dimension theory (see Section 2.4 below). Our main idea is to make a connection from symbolic dynamics, via the theory of Leavitt path algebras, to the notion of {\it talented monoids} introduced by the second author and H. Li \cite{hazli} as follows: If essential graphs $E$ and $F$ are shift equivalent, i.e., their adjacency matrices $A_E$ and $A_F$ are shift equivalent, then their Krieger's dimension groups are isomorphic, $\Delta_{A^t_E}\cong \Delta_{A^t_F}$ by \cite[Theorem 4.2]{krieger} (see Theorem \ref{kriegeror} below). It is well-known that $\Delta_{A^t_E}\cong K^{\gr}_0(L_K(E))$, the latter being the graded Grothendieck group of the Leavitt path algebra $L(E)$ \cite[Lemma 11]{hazd} (see, also
\cite{arapar}). Therefore, we immediately get a $\mathbb{Z}[x, x^{-1}]$-order isomorphism $K^{\gr}_0(L_K(E))\cong K^{\gr}_0(L_K(F))$.
The positive cone of the graded Grothendieck group $K^{\gr}_0(L_K(E))$ may be described explicitly based on $E$, via the so-called talented monoid $T_E$ of $E$. Thereby, from a shift equivalence of graphs we obtain a $\mathbb{Z}$-monoid isomorphism $T_E\cong T_F$. This
isomorphism provides us control over special elements of the talented monoids and consequently on the geometry of the graphs. 
Thus, a careful analysis of the monoid isomorphism and Williams' graph moves allows us to give number-theoretic criteria for graphs of Gelfand-Kirillov dimension three to be strong shift equivalent and use then these criteria to prove that Williams' conjecture holds for these graphs.

It has already noticed by Cuntz and Krieger in 1980's \cite{CunKri80} that the notion of shifts of finite type and their equivalences can be related to invariances of certain graph $C^*$-algebras. This line of research was pursued and substantially developed by Matsumoto and others (see \cite{Mat, ef} and the references there). The Leavitt path algebras which are the discrete version of graph $C^*$-algebras, were introduced in 2005 and the classification of these algebras and their relations to shift of finite type became an active line of research \cite{lpabook, CortHaz}. Being purely algebraic objects, they facilitate connections with other areas of algebra, such as representation theory and even chip-firing games \cite{AH23, HNam}. One of the main conjectures that would relate these topics is the Graded Morita Classification Conjecture: For any finite graphs without sinks $E$ and $F$, their adjacency matrices are shift equivalent if and only if if their Leavitt path algebras are graded Morita equivalent if and only if their graph $C^*$-algebras are equivariant Morita equivalent. Reformulating this in terms of $K$-theory enables us to eliminate the no-sink assumption from the Conjecture (Conjecture \ref{conjehfyhtr} below): for two finite graphs $E$ and $F$, the Leavitt path algebra $L_K(E)$ is graded Morita equivalent to $L_K(F)$ if and only if there is an order-preserving $\mathbb{Z}[x, x^{-1}]$-module isomorphism $K^{\gr}_0(L_K(E))\cong K^{\gr}_0(L_K(F))$. A similar Conjecture can
be written in the analytic setting \cite{CortHaz}: The graph $C^*$-algebras $C^*(E)$ and $C^*(F)$ are equivariant Morita equivalent if and only if $K^{\mathbb{T}}_0(C^*(E))\cong K^{\mathbb{T}}_0(C^*(F))$ as order-preserving $\mathbb{Z}[x, x^{-1}]$-modules. In this paper we show that these conjectures hold for the class of graphs of Gelfand-Kirillov dimension three.

For a field $K$ and a finite graph $E$, the {\it path algebra} $KE$ of $E$ is the $K$-vector space with a basis consisting all (finite) paths in $E$ and the multiplication of paths is just juxtaposition.
There are close relations between Leavitt path algebras and the representation theory of $KE/J^2_E$, where $J_E$ is the two-sided ideal of $KE$ generated by all edges in $E$. For a finite dimensional $K$-algebra $A$, the {\it singularity category} $\text{D}_{\text{sg}}(A)$ of $A$, due to Buchweitz \cite{Buchweitz} and Orlov \cite{Orlov}, defined as the Verdier quotient category of the bounded derived category of finitely generated left $A$-modules modulo the full subcategory consisting of perfect complexes. It was proved by Chen and Yang~\cite{chen}, using the results of Paul Smith~\cite{Smith1} and Hazrat \cite{hazd}, that for essential graphs $E$ and $F$, $\text{D}_{\text{sg}}(KE/J_E^2)$ is triangulated equivalent to $\text{D}_{\text{sg}}(KF/J_F^2)$ if and only if $L_K(E)$ is graded Morita equivalent to $L_K(F)$. Combining this with \cite[Proposition 15 (3)]{hazd}, we obtain that the triangulated equivalence of $\text{D}_{\text{sg}}(KE/J_E^2)$ and $\text{D}_{\text{sg}}(KF/J_F^2)$ implies the shift equivalence of $A_E$ and $A_F$. Conjecture 8.8.2 of \cite{CortHaz} says that the converse of this assertion  also holds. In this paper we prove that the conjecture holds for the class of graphs of Gelfand-Kirillov dimension three.

The paper is organized as follows: In Section \ref{sec2}, we recall the fundamental concepts of $\Gamma$-monoids, the monoids $M_E$, $T_E$ associated with a directed graph $E$, in-splitting, out-splittings, and Krieger's dimension group to enable our careful analysis of the talented monoid $T_E$, which is a $\mathbb{Z}$-monoid. In Section \ref{sec3}, using talented monoids, we show that the class of graphs with disjoint cycles is closed under shift equivalence and shift equivalence preserves the cycle structure of these graphs (Theorem \ref{corcycle1}). Then, by a careful analysis of graphs moves (I) and (O), we provide number-theoretic criteria for graphs of Gelfand-Kirillov dimension three to be strong shift equivalent (Theorems \ref{numtheo-crite-sse1} and \ref{numtheo-crite-sse2}). In Section \ref{sec4}, based on the previous sections, we show that both Williams' Conjecture and The Graded Classification Conjecture hold for the class of graphs of Gelfand-Kirillov dimension three (Theorem \ref{mainthm}). Consequently, we obtain that the Leavitt path algebras of  graphs of Gelfand-Kirillov dimension three are graded Morita equivalent if and only if their graph $C^*$-algebras are equivariant Morita equivalent (Corollary \ref{maintheo-cor1}), as well as show that for two graphs $E$ and $F$ of Gelfand-Kirillov dimension three, their adjacency matrices are shift equivalent if and only if the singularity categories $\text{D}_{\text{sg}}(KE/J_E^2)$ and $\text{D}_{\text{sg}}(KF/J_F^2)$ are triangulated equivalent (Corollary \ref{maintheo-cor2}).

Throughout we write $\mathbb{N}$ for the set of non-negative integers, and under the usual sum, it is the free
monoid generated by a single element.

\section{Preliminaries: Monoids, Graphs and Algebras}\label{sec2}

\subsection{Monoids, \texorpdfstring{\(\mathbb{Z}\)}{ℤ}-monoids and order ideals}

A \emph{semigroup} is a set with an associative binary operation. Subsemigroups of a semigroup are defined in the usual sense: these are the nonempty subsets closed under the operation.

A \emph{monoid} is a semigroup whose operation has an identity element. A {\em submonoid} of a monoid is a subsemigroup that also contains the identity. Throughout this paper, we are most interested in \emph{commutative monoids}, which are those with a commutative operation. In this case, the operation is written additively (with the symbol $+$) and the unit is denoted as zero ($0$). A commutative monoid $M$ is called \emph{conical} if $x+y=0$ in $M$ implies $x=y=0$. The monoid $M$ is called \emph{cancellative} if $x+a=y+a$ in $M$ implies $x=y$.

Given a commutative monoid $M$, we define the \emph{algebraic preorder} on $M$ by setting $x\leq y$ if $y=x+z$ for some $z\in M$. If $M$ is conical and cancellative, then $\leq$ is a partial order. Conversely, if $\leq$ is a partial order then $M$ is conical.

A \emph{$\Gamma$-monoid}, where $\Gamma$ is an abelian group,  consists of a monoid $M$ equipped with a group action of 
$\Gamma$ by monoid homomorphisms. The image of an element $m\in M$ under the action of a group element $\gamma \in \Gamma$ is denoted by $^\gamma m$. A monoid homomorphism $\phi: M\longrightarrow M'$ is called a {\it $\Gamma$-monoid homomorphism} if $\phi(^\gamma m) = $$^\gamma\phi(m)$ for all $m\in M$ and $\gamma\in \Gamma$.


The set of natural numbers is denoted by $\mathbb{N}=\left\{0,1,2,\ldots\right\}$. Under the usual sum, it is the free monoid generated by a single element. One of the $\mathbb Z$-monoids we encounter in this paper is the following. 

\begin{deff}\label{cycmond}	Let $k$ be a positive integer. The monoid $T=\bigoplus_{i=1}^k \mathbb N$, with the action of $\mathbb Z$ defined by ${}^1(a_1,\dots,a_{k-1},a_k)=(a_k,a_1\dots, a_{k-1})$, is called the \emph{$\mathbb Z$-cyclic monoid of rank $k$}.
\end{deff}

Let $M$ be a  $\Gamma$-monoid and $\gamma\in \Gamma$. The $\Gamma$-monoid $M$ is called {\it $\gamma$-periodic} if $^\gamma m = m$ for all $m\in M$. A {\it $\Gamma$-ordered ideal} of $M$ is a submonoid $I$ of $M$ which is closed under the action of $\Gamma$ and it is hereditary in the sense that $x\le y$ and
$y\in I$ imply $x\in y$. A nonzero $\Gamma$-monoid $M$ is called {\it simple} if the only order-ideals of $M$ are $0$ and $M$.

Let $I$ be an order-ideal of $M$. Definite an equivalence relation $\sim_I$ on $M$ as follows: For $a, b\in M$, there exist $x, y\in I$ such that $a + x = b +y$. This is a congruence relation and thus one can form the quotient $\Gamma$-monoid $M/\sim$ which we will denote by $M/I$.

For $\{a_1, \dots a_k\} \subseteq  M$, we denote the  $\Gamma$-order ideal of $M$ generated by the elements $a_i$ by $\langle a_1, \dots, a_k \rangle $. It is easy to see that 
\[\langle a_1, \dots, a_k \rangle=\Big \{ x \in M \mid x \leq \sum_{(i_1, \dots, i_k)\in \Gamma^k} {}^{\gamma_1} a_{1} + \dots + {}^{\gamma_k} a_{k} \Big \}.\]

\subsection{Graphs and associated monoids}

A (directed) graph $E = (E^0, E^1, r, s)$ consists of two disjoint sets $E^0$ and $E^1$, called \emph{vertices} and \emph{edges} respectively, together with two maps $r, s: E^1 \longrightarrow E^0$.  The vertices $s(e)$ and $r(e)$ are referred to as the \emph{source} and the \emph{range} of the edge~$e$, respectively. A graph $E$ is called {\it row-finite} if $|s^{-1}(v)|< \infty$ for all $v\in E^0$. A graph $E$ is called {\it finite} if both sets $E^0$ and $E^1$ are finite.

A {\em sink} in a graph $E$ is a vertex $v \in E^0$ with $|s^{-1}(v)| = 0;$ a {\em source} is a vertex $v \in E^0$ with $|r^{-1} (v)| =0.$  A  finite graph is {\em essential} if it is a graph with neither sources nor sinks.

Let $E$ be a  graph. A (finite) \emph{path} in $E$ is a string
$p=e_1\cdots e_n$ of edges $e_i\in E^1$ such that $r(e_i) = s(e_{i+1})$ for all $i$. The \emph{length} of the path $p = e_1 \cdots e_n$ is $n$, and is denoted by $|p|$. 
The source and range maps on edges are extended to paths as
\[s(e_1\cdots e_n)=s(e_1)\qquad\text{and}\qquad r(e_1\cdots e_n)=r(e_n).\]
Vertices are  regarded as paths of length $0$, with each vertex coinciding with its source and its range. We denote by $\text{Path}(E)$ the set of all paths in $E$.

A vertex $v\in E^0$ is said to \emph{lie} on a path $p$ if $v$ is the source or the range of one of the edges which comprise $p$. The set of vertices that lie on $p$ is denoted by $p^0$.

A path  $p= e_{1} \cdots e_{n}$ of positive length is a \textit{cycle based at} the vertex $v$ if $s(p) = r(p) =v$ and the vertices $s(e_1), s(e_2), \hdots, s(e_n)$ are distinct. An edge $f\in E^1$ is called an \emph{exit} of a cycle $e_1\cdots e_n$ if there is an $1\leq i \leq n$ such that $s(f)=s(e_i)$ and $f\neq e_i$.

A subset $H$ of $E^0$ is called \textit{hereditary} if $r(e)\in H$ implies $s(e)\in H$ for all $e\in E^1$. And $H$ is called \textit{saturated} if whenever $v$ is a regular vertex in $E^0$ with the property that $s(r^{-1}(v)) \subseteq H$, then $v\in H$. For a subset $X\subseteq  E^0$, the {\it hereditary saturated closure of} $X$, denoted by $\overline{X}$, is the smallest hereditary and saturated subset of $E^0$ containing $X$.

\begin{deff}\label{def:graphmonoid}
Let $E$ be a row-finite graph. We define $F_E$ to be the free commutative monoid generated by $E^0$. The \emph{graph monoid} of $E$, denoted $M_E$, is the 
	quotient of $F_E$ by the relation 
	\[v=\sum_{e\in s^{-1}(v)}r(e)\]
	for every $v\in E^0$ that is not a sink.
\end{deff}

The relations defining $M_E$ can be described more concretely as follows: First, define a relation $\to_1$ on $F_E$ as follows: for $\sum_{i=1}^n v_i  \in F_E$,  and any $1\leq j \leq n$, set 

\[\sum_{i=1}^n v_i \to_1 \sum_{i\not = j }^n v_i+  \sum_{e\in s^{-1}(v_j)}r(e).\]
Then $M_E$ is the quotient of $F_E$ by the {congruence} $\to$ generated by $\to_1$.  To be precise, $\to$ is the smallest reflexive, transitive and additive relation on $F_E$ which contains 
$\to_1$. This relation may be regarded as follows: If $x=\sum_i x_i$ is an element of $F_E$, we may ``let a vertex $x_i$ flow'' to construct the element $y_1=\left(\sum_{j\neq i}x_j\right)+\sum_{e\in s^{-1}(x_i)}r(e)$ with $x\to y_1$. Repeating this procedure and ``letting a vertex of $y_1$ flow'', we construct another element $y_2\in F_E$ such that $y_1\to y_2$. In other words, we simply apply the definition of $\to_1$ to vertices in the representation of elements of $F_E$. By the definition of $\to$, every element $y\in F_E$ such that $x\to y$ may be constructed from $x$ by ``letting its vertices flow successively'' in this manner.

The following lemma is essential to the remainder of this paper, as it allows us to translate the relations in the definition of $M_E$ in terms of the simpler relation $\to$ in $F_E$.

\begin{lemma}[{\cite[Lemmas 4.2 and 4.3]{amp}}]\label{confuu}
For every row-finite graph $E$, the following statements hold:
	
$(1)$	If $a,b\in F_E\setminus\left\{0\right\}$, then $a=b$ in $M_E$ if and only if there exists $c\in F_E$ such that $a\to c$ and $b\to c$. (Note that, in this case, $a=b=c$ in $M_E$.)

$(2)$	If $a=a_1+a_2$ and $a\to b$ in $F_E$, then there exist $b_1,b_2\in F_E$ such that $b=b_1+b_2$, $a_1\to b_1$ and $a_2\to b_2$.
\end{lemma}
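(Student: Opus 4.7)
The plan is to treat the two parts separately. Part (2) is a lifting statement showing that a flow derivation in $F_E$ respects any a priori decomposition of its starting element, and I would prove it by induction on the number of $\to_1$ steps comprising $a \to b$. Part (1) is a Church--Rosser style confluence statement, and I would derive it by establishing a strong local confluence (``diamond'') property for $\to_1$ and then extending it to the reflexive-transitive closure.

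For (2), the base case is $a = b$ (zero steps), where I simply take $b_i = a_i$. For the induction I first handle the single-step case $a \to_1 b$: writing $a = a_1 + a_2$ in the free commutative monoid $F_E$, the single non-sink vertex $v$ fired in the transition sits in the multiset underlying $a_1$ or in that of $a_2$; without loss of generality in $a_1$. Firing this occurrence within $a_1$ yields $b_1$ with $a_1 \to_1 b_1$, and setting $b_2 = a_2$ gives $b = b_1 + b_2$ as required. For a derivation $a \to_1 a' \to b$ of length $\geq 2$, I apply the single-step case to $a \to_1 a'$ to split $a' = a_1' + a_2'$, then invoke the inductive hypothesis on the shorter derivation $a' \to b$ to split $b$ compatibly.

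For (1), the ``if'' direction is immediate, since each $\to_1$ step is an equality in $M_E$. For the converse I first prove the strong confluence: whenever $a \to_1 b_1$ by firing an occurrence of $v_j$ in $a$ and $a \to_1 b_2$ by firing an occurrence of $v_k$ in $a$, there exists $c \in F_E$ with $b_1 \to c$ and $b_2 \to c$ in at most one step each. This is transparent: if the two fired occurrences are distinct summands of $a$ then the firings commute, so firing the remaining occurrence in each of $b_1$ and $b_2$ reaches a common $c$; if they are the same occurrence then $b_1 = b_2$. A standard induction on the lengths of two co-initial derivations then upgrades this diamond property to Church--Rosser for the reflexive-transitive closure $\to$. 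Finally, $a = b$ in $M_E$ means that $a$ and $b$ are connected by a zig-zag of $\to_1$ steps, so Church--Rosser supplies a common descendant $c$; the assumption $a, b \neq 0$ together with the observation that $\to_1$ never decreases multiset size guarantees $c \neq 0$.

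The main subtlety is the Church--Rosser upgrade: the rewriting system is not terminating (repeatedly flowing a vertex lying on a cycle can produce arbitrarily large elements), so Newman's lemma is unavailable. It is therefore essential that the confluence at the one-step level holds in the strong ``at most one step in each direction'' form, which is precisely what makes the propagation to arbitrary derivations go through by a clean double induction, without any well-foundedness input.
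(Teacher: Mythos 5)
Your proof is correct. The paper does not actually prove this lemma---it is imported verbatim from Ara--Moreno--Pardo \cite{amp} (Lemmas 4.2 and 4.3)---and your argument, the refinement induction on the number of $\to_1$ steps for (2) and the strong one-step diamond property upgraded to Church--Rosser (deliberately avoiding Newman's lemma, since the system is non-terminating) for (1), is essentially the proof given in that reference.
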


Next we define the \emph{talented monoid} $T_E$ of $E$, which is believed to encode the graded structure of the Leavitt path algebra $L_{K}(E)$ (see Conjecture~\ref{conjehfyhtr}) and, later in the paper, plays the role of a bridge between symbolic dynamics and the theory of Leavitt path algebras.

\begin{deff}[{\cite[Page 436]{hazli}}]\label{talentedmon}
For a row-finite graph $E$, the \emph{talented monoid} of $E$, denoted by $T_E$, is the commutative 
	monoid generated by $\{v(i) \mid v\in E^0, i\in \mathbb Z\}$, subject to
	\[v(i)=\sum_{e\in s^{-1}(v)}r(e)(i+1)\]
	for every $i \in \mathbb Z$ and every $v\in E^{0}$ that is not a sink. The additive group $\mathbb{Z}$ of integers acts on $T_E$ via monoid automorphisms by shifting indices: For each $n,i\in\mathbb{Z}$ and $v\in E^0$, define ${}^n v(i)=v(i+n)$, which extends to an action of $\mathbb{Z}$ on $T_E$. Throughout the paper we denote the elements $v(0)$ in $T_E$ by $v$.
\end{deff}

The talented monoid of a graph can also be seen as the graph monoid of the so-called \emph{covering graph} of $E$, denoted by $\overline{E}$: 
we have   $\overline{E}^0=E^0\times\mathbb{Z}$,  $\overline{E}^1=E^1\times\mathbb{Z}$ and the range and source maps are given by
\[s(e,i)=(s(e),i),\qquad r(e,i)=(r(e),i+1).\] Note that the graph monoid $M_{\overline E}$ has a natural $\mathbb Z$-action by ${}^n (v,i)= (v,i+n)$. The following theorem allows us to use Lemma~\ref{confuu} for the talented monoid $T_E$ by identifying it with $M_{\overline E}$. 

\begin{theorem}[{\cite[Lemma 3.2]{hazli}}]\label{hgfgfgggf}
For every row-finite graph $E$,	the correspondence
	\begin{align*}
	T_{E} &\longrightarrow M_{\overline{E}}\\
	v(i) &\longmapsto (v,i)
	\end{align*}
	induces a $\mathbb Z$-monoid isomorphism.
\end{theorem}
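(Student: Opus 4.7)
The plan is to use the universal property of the quotient monoid on each side. Both $T_E$ and $M_{\overline E}$ are defined as free commutative monoids (on $E^0\times \mathbb Z$ and on $\overline{E}^0=E^0\times\mathbb Z$ respectively) modulo an explicit family of relations, and the map $v(i)\mapsto (v,i)$ sends the generators of one presentation bijectively to the generators of the other. So it is enough to show that the generating relations on the $T_E$ side are respected when we pass to $M_{\overline E}$ (and symmetrically for the inverse assignment), and then to verify that the resulting bijection is $\mathbb Z$-equivariant.

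For the forward direction, extend $v(i)\mapsto (v,i)$ to a monoid homomorphism $\widetilde\phi$ out of the free commutative monoid on $\{v(i)\mid v\in E^0,\,i\in\mathbb Z\}$. To descend $\widetilde\phi$ to a homomorphism $\phi: T_E\to M_{\overline E}$, one checks that for every $v\in E^0$ not a sink and every $i\in\mathbb Z$,
\[
\widetilde\phi\!\left(\sum_{e\in s^{-1}(v)} r(e)(i+1)\right)=\sum_{e\in s^{-1}(v)}(r(e),i+1)=\sum_{(e,i)\in s^{-1}(v,i)} r(e,i),
\]
which equals $(v,i)=\widetilde\phi(v(i))$ by the defining relation of $M_{\overline E}$ at the non-sink vertex $(v,i)$. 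The inverse candidate $\psi:M_{\overline E}\to T_E$ is constructed identically from $(v,i)\mapsto v(i)$, using the same computation read in reverse. Since $\phi$ and $\psi$ agree with mutually inverse bijections on generators, they are inverse monoid homomorphisms.

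Finally, $\mathbb Z$-equivariance is built into the assignment: on $T_E$ the action satisfies ${}^n v(i)=v(i+n)$, and on $M_{\overline E}$ it satisfies ${}^n(v,i)=(v,i+n)$, so $\phi({}^n v(i))=(v,i+n)={}^n\phi(v(i))$, and this extends additively to all of $T_E$. The only subtlety worth flagging, which is the real content of the proof, is the matching of the set of vertices at which the defining relation is imposed on each side: a vertex $v\in E^0$ is a sink iff $s^{-1}(v)=\emptyset$ iff for all (equivalently, any) $i\in\mathbb Z$, $s^{-1}(v,i)=\emptyset$ in $\overline E$, iff $(v,i)$ is a sink in $\overline E$. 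Once this is noted, there is no obstacle; the isomorphism is essentially a restatement of the two presentations in matching language.
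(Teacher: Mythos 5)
Your argument is correct: the theorem is quoted in the paper from \cite[Lemma 3.2]{hazli} without proof, and your direct verification that the two presentations have identical generators and identical defining relations (after matching sinks of $E$ with sinks of $\overline{E}$, and using $s^{-1}(v,i)=\{(e,i)\mid e\in s^{-1}(v)\}$ so that $r(e,i)=(r(e),i+1)$) is exactly the standard argument used in that reference. The $\mathbb Z$-equivariance check is also as expected, so there is nothing to add.
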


We should note that in \cite[Lemma 5.5]{arahazrat} the authors showed that $M_E$ is cancellative if and only if $E$ is acyclic for all $E$. Also, it is not hard to see that $\overline{E}$ is acyclic for all $E$. Therefore, $T_E$ is a cancellative monoid for all $E.$


\subsection{Leavitt path algebras}\label{leviig}
The Leavitt path algebra $L_K(E)$ of a graph $E$ with coefficients in a field $K$ was introduced by Abrams and Aranda Pino in \cite{ap:tlpaoag05}, and independently by Ara, Moreno and Pardo in \cite{amp}. Leavitt path algebras generalize the Leavitt algebras
$L_K(1, n)$ of \cite{leav:tmtoar}, and are intimately related to graph $C^*$-algebras (see \cite{r:ga}). We refer the reader to \cite{a:lpatfd} and \cite{lpabook} for a detailed history and overview of Leavitt path algebras.

\begin{deff}\label{LPAs}
For a row-finite graph $E = (E^0,E^1,s,r)$ and any  field $K$, the \emph{Leavitt path algebra} $L_{K}(E)$ {\it of the graph}~$E$
\emph{with coefficients in}~$K$ is the $K$-algebra generated
by the union of the set $E^0$  and two disjoint copies $E^1$, say $E^1$ and $\{e^*\mid e\in E^1\}$, satisfying the following relations for all $v, w\in E^0$ and $e, f\in E^1$:
	\begin{itemize}
		\item[(1)] $v w = \delta_{v, w} w$;
		\item[(2)] $s(e) e = e = e r(e)$ and $e^*s(e) = e^* = r(e)e^*$;
		\item[(3)] $e^* f = \delta_{e, f} r(e)$;
		\item[(4)] $v= \sum_{e\in s^{-1}(v)}ee^*$ for any  vertex $v$ that is not a sink;
	\end{itemize}
	where $\delta$ is the Kronecker delta.
\end{deff}
It can be shown (\cite[Lemma 1.6]{ap:tlpaoag05}) that $L_K(E)$ is unital if and only if $E^0$ is finite; in this case the identity of $L_K(E)$ is $\sum_{v\in E^0}v$. For any path $p= e_1e_2\cdots e_n$,  the element $e^*_n\cdots e^*_2e^*_1$ of $L_K(E)$ is denoted by $p^*$.   It can be shown (\cite[Lemma 1.7]{ap:tlpaoag05}) that $L_K(E)$ is  spanned as a $K$-vector space by $\{pq^* \mid p, q\in \text{Path}(E), r(p) = r(q)\}$.  Indeed,  $L_K(E)$ is a $\mathbb{Z}$-graded $K$-algebra:  
$L_K(E)= \bigoplus_{n\in \mathbb{Z}}L_K(E)_n$,  where for each $n\in \mathbb{Z}$, the degree $n$ component $L_K(E)_n$ is the set \ $ \text{span}_K \{pq^*\mid p, q\in \text{Path}(E), r(p) = r(q), |p|- |q| = n\}$.

Finding a complete invariant for the classification of Leavitt path algebras is an ongoing endeavor. The Graded Classification Conjecture~(\cite{mathann,hazd}, \cite[\S 7.3.4]{lpabook})  roughly predicts that the graded Grothendieck group $K_0^{\gr}$ classifies
Leavitt path algebras of finite graphs, up to graded isomorphism. The conjecture is closely related to Williams' conjecture (see~\S\ref{dynref}).

In order to state the Graded Classification Conjecture, we first recall the definition of the graded Grothendieck group of a $\Gamma$-graded ring. Given a $\Gamma$-graded ring $A$ with identity and a graded finitely generated projective (right) $A$-module $P$, let $[P]$ denote the class of graded $A$-modules graded isomorphic to $P$. Then the monoid  
\[\mathcal V^{\gr}(A)= \{[P] \mid  P  \text{ is a graded finitely generated projective A-module}\}\]
has a $\Gamma$-module structure defined as follows: for $\gamma \in \Gamma$ and $[P]\in \mathcal V^{\gr}(A)$, $\gamma .[P]=[P(\gamma)]$, where $[P(\gamma)]$ is the $\gamma$-twist of $P$.

The group completion of $\mathcal V^{\gr}(A)$ is called the \emph{graded Grothendieck group} and is denoted by $K^{\gr}_0(A)$.  The $\Gamma$-module structure on $\mathcal V^{\gr}(A)$ induces 
a $\mathbb{Z}[\Gamma]$-module structure on the group $K^{\gr}_0(A)$. In particular, the graded Grothendieck group of a $\mathbb{Z}$-graded ring has a natural $\mathbb{Z}[x,x^{-1}]$-module structure. 

By \cite[Proposition 5.7]{arahazrat}, $T_E$ is $\mathbb{Z}$-monoid isomorphic to the monoid $\mathcal V^{\gr}(L_K(E))$, where $E$ is a row-finite graph and $K$ is an arbitrary field. 

\begin{conj}[{\sc The Graded Classification Conjecture}]\label{conjehfyhtr}
Let $E$ and $F$ be finite graphs, and $K$ a field. Then the following statements are equivalent:
	
$(1)$ The Leavitt path algebras $L_K(E)$ and $L_K(F)$ are graded Morita equivalent;
		
		
$(2)$ There is an order-preserving 
		$\mathbb Z[x,x^{-1}]$-module isomorphism
		$K_0^{\gr}(L(E))\rightarrow K_0^{\gr}(L(F))$;

$(3)$ The talented monoids $T_E$ and $T_F$ are $\mathbb{Z}$-isomorphic.	
\end{conj}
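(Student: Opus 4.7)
The plan is to establish (1) $\Rightarrow$ (2) $\Leftrightarrow$ (3) $\Rightarrow$ (1), with the last implication being by far the most delicate.

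For (1) $\Rightarrow$ (2) I would invoke the general principle that a graded Morita equivalence between two unital $\mathbb{Z}$-graded rings is implemented by a graded bimodule progenerator, inducing an equivalence of the categories of graded finitely generated projective modules that commutes with the grading shift $P \mapsto P(1)$. This produces a $\mathbb Z$-monoid isomorphism $\mathcal V^{\gr}(L_K(E)) \cong \mathcal V^{\gr}(L_K(F))$, and passage to group completions yields the desired order-preserving $\mathbb Z[x,x^{-1}]$-module isomorphism of $K_0^{\gr}$.

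For (2) $\Leftrightarrow$ (3) I would exploit the $\mathbb Z$-monoid isomorphism $T_E \cong \mathcal V^{\gr}(L_K(E))$ recorded after Theorem~\ref{hgfgfgggf}, together with the observation that $T_E \cong M_{\overline E}$ is cancellative because the covering graph $\overline E$ is acyclic. Cancellativity guarantees that the canonical map from $T_E$ into its group completion is injective and identifies $T_E$ with the positive cone of $K_0^{\gr}(L_K(E))$. Consequently, a $\mathbb Z$-monoid isomorphism $T_E \cong T_F$ extends uniquely to an order-preserving $\mathbb Z[x,x^{-1}]$-module isomorphism of $K_0^{\gr}$'s, while, conversely, any order isomorphism of the $K_0^{\gr}$'s restricts to a $\mathbb Z$-monoid isomorphism of the positive cones, and hence of the talented monoids.

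The heart of the conjecture is (3) $\Rightarrow$ (1), and this is where I expect the main obstacle. My strategy would be dynamical. First, translate a $\mathbb Z$-monoid isomorphism $T_E \cong T_F$ into a shift equivalence of the adjacency matrices $A_E$ and $A_F$ via the Krieger dimension group dictionary recalled in the introduction. Second, attempt to manufacture a graded $(L_K(E), L_K(F))$-bimodule progenerator directly out of the shift equivalence data: the two intertwining rectangular nonnegative matrices witnessing the shift equivalence can be encoded as a bipartite graph, and one would try to realize the associated Leavitt-type algebra as a bimodule implementing the Morita equivalence, using the lag of the shift equivalence to control the grading. A potentially cleaner alternative is the groupoid model: the Leavitt path algebra is a Steinberg algebra of the graph groupoid $\mathcal G_E$ equipped with its canonical $\mathbb Z$-cocycle, and graded Morita equivalence corresponds to an equivariant groupoid equivalence, so the problem reduces to upgrading the combinatorial isomorphism $T_E \cong T_F$ to an equivalence of $\mathbb Z$-graded groupoids (an algebraic analogue of the Matsumoto--Matui type results on the $C^*$-side). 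The central difficulty is that, in contrast to the local moves of in/out-splitting and amalgamation that underlie strong shift equivalence, a general shift equivalence carries no immediate graph-theoretic realization, and the failure of Williams' conjecture in general shows that one cannot hope to reduce to SSE; promoting purely K-theoretic data to bimodule or groupoid data therefore requires genuinely new input, which is precisely why the present paper restricts to Gelfand--Kirillov dimension three, where the Williams-style graph moves can be analysed head-on.
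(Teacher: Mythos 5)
The statement you are asked to prove is stated in the paper as a \emph{conjecture}, and the paper gives no proof of it in general; it only establishes it for the restricted class of graphs of Gelfand--Kirillov dimension three (Theorem \ref{mainthm}), via normal forms, an atom analysis of the talented monoid, and a number-theoretic criterion for strong shift equivalence. Measured against that, your write-up correctly disposes of the two implications that are actually known: $(1)\Rightarrow(2)$ is exactly the paper's argument (graded Morita equivalence is implemented by a graded bimodule progenerator, hence induces an order-preserving $\mathbb Z[x,x^{-1}]$-module isomorphism on $K_0^{\gr}$, cf.\ the citation of \cite[Theorem 2.3.7]{hazbk}), and $(2)\Leftrightarrow(3)$ is the identification of $T_E$ with the positive cone $\mathcal V^{\gr}(L_K(E))$ of $K_0^{\gr}(L_K(E))$ together with cancellativity of $T_E$, which is the same route the paper takes via \cite{hazli} and \cite[Proposition 5.7]{arahazrat}.

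The genuine gap is $(3)\Rightarrow(1)$, and your proposal does not close it: you translate $T_E\cong T_F$ into shift equivalence of $A_E$ and $A_F$ (which, note, already requires $E$ and $F$ to be essential, whereas the conjecture is stated for arbitrary finite graphs precisely to avoid that hypothesis), and then you describe two programmes --- building a bimodule progenerator out of the intertwining matrices of a shift equivalence, or upgrading to a graded groupoid equivalence --- while explicitly conceding that carrying either out ``requires genuinely new input.'' That concession is accurate, but it means no proof has been given; the step from shift-equivalence data (or from an order isomorphism of $K_0^{\gr}$) to an actual graded Morita equivalence is exactly the open content of the conjecture, and the counterexamples of Kim and Roush to Williams' conjecture show one cannot hope to bridge it by reducing to strong shift equivalence and graph moves in general. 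So your attempt should be read as a correct account of the known implications plus a research plan for the open one, not as a proof of the stated equivalences; if your goal is a complete argument, it can at present only be achieved under additional hypotheses such as the Gelfand--Kirillov dimension three assumption the paper imposes.
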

  
We refer the reader to \cite{Abrams-Ruiz-Tomforde2023, arapar, guido,guido1,guidowillie, toke,eilers2, vas} for works on the graded classification conjecture.

\subsection{Symbolic Dynamics}\label{dynref}
The notion of shift equivalence for matrices was introduced by Williams in \cite{williams} (see also~\cite[\S7]{lindmarcus}) in an attempt to provide computable machinery for determining the conjugacy between two shifts of finite type. Recall that two square nonnegative integer matrices $A$ and $B$ are called {\it elementary shift equivalent}, and denoted by $A\sim_{ES} B$, if there are nonnegative matrices $R$ and $S$ such that $A=RS$ and $B=SR$. 
Two square nonnegative integer matrices $A$ and $B$ are called {\it strong shift equivalent}, denoted by $A\sim_{SSE}B$, if there is a sequence of finite elementary shift equivalences from $A$ to $B$. The weaker notion of shift equivalence is defined as follows. The nonnegative integer matrices $A$ and $B$ are called {\it shift equivalent}, denoted by $A\sim_{SE} B$, if there exist $l\ge 1$ and nonnegative matrices $R$ and $S$ such that $A^l=RS$ and $B^l=SR$,  and $AR=RB$ and $SA=BS$. It is not hard to see that  strong
shift equivalence implies shift equivalence (see, e.g., \cite[Theorem 7.3.3]{lindmarcus}). The problem of deciding whether the converse is true remained open for a long time and was known as
Williams' conjecture.

The {\em adjacency matrix} $A_E\in M_{E^0}(\mathbb{N})$ of a  graph $E$ provides the link between symbolic dynamics and  graphs.  By definition, $A_E$ is a square matrix with \[ A_E(v,w) = \left| \left( s^{-1}(v) \cap r^{-1}(w)\right)\right| .\]  Conversely,  any $A \in M_n(\mathbb{N})$ can be interpreted as the adjacency matrix on a graph $E$ with $|E^0| = n$;  $E^1$ consists of precisely $A(i,j)$ edges from vertex $i$ to vertex $j$, for all $1 \leq i, j \leq n$. 

Identifying a square nonnegative integer matrix with its associated graph (and the graph with its adjacency matrix), Williams \cite{williams} showed that two matrices $A$ and $B$ are strongly shift equivalent if and only if one can reach from $A$ to be $B$ with certain graph moves. We recall these in and out-splitting graph moves here, as we will employ them throughout the text. 

\subsection*{Move (I): In-splitting}

\begin{deff}[{\cite[Definition 6.3.20]{lpabook}}]\label{def:insplit}
Let $E$ be a graph. For each $v\in E^0$ with $r^{-1}(v)\neq \emptyset$, take a partition $\left\{\mathscr{E}^v_1,\ldots,\mathscr{E}^v_{m(v)}\right\}$ of $r^{-1}(v)$. We form a new graph $F$ as follows.  Set
	\[F^0=\left\{v_i \mid v\in E^0,1\leq i\leq m(v)\right\}\cup\left\{v \mid r^{-1}(v)=\emptyset \right\}\]
	\[F^1=\left\{e_j \mid e\in E^1,1\leq j\leq m(s(e))\right\}\cup\left\{e \mid r^{-1}(s(e))=\emptyset \right\},\]
	and define the source and range maps  as follows: If $r^{-1}(s(e))\neq\emptyset $, and 
	$e\in\mathscr{E}^{r(e)}_i$, then
	\[s(e_j)=s(e)_j,\qquad r(e_j)=r(e)_i, \text{ for all } 1\leq j \leq m(s(e)).\]
	If $r^{-1}(s(e))=\emptyset$, set $s(e)$ as the original source of $e$, and $r(e)=r(e)_i$, where 
	$e\in\mathscr{E}^{r(e)}_i$.
	
The graph $F$ is called an \emph{in-splitting} of $E$, and conversely $E$ is called an \emph{in-amalgamation} of $F$. We say that $F$ is formed by performing \emph{Move (I)} on $E$.
\end{deff}

\subsection*{Move (O): Out-splitting}

The notions dual to those of in-splitting and in-amalgamation are called \emph{out-splitting} and \emph{out-amalgamation}, respectively. Given a graph $E=(E^0,E^1,r, s)$, the \emph{transpose graph} is defined as $E^*=(E^0,E^1,s,r)$.

\begin{deff}[{\cite[Definition 6.3.23]{lpabook}}]\label{def:outsplit}
	A graph $F$ is an \emph{out-splitting} (\emph{out-amalgamation}) of a graph $E$ if $F^*$ is an in-splitting (in-amalgamation) of $E^*$, and we say that $F$ is formed by performing \emph{Move (O)} on $E$.
	
\end{deff}

We emphasize that in this paper, we require the sets $\mathscr E^i_v$ used in in- or out-splitting to be non-empty.


Let $E$ be an essential graph with the discrete topology on $E^1$. Define the topological edge shift $(X_E, \sigma_E)$ by setting:
$$X_E:= \{x=(x_n)_{n\in \mathbb{Z}}\mid x_n\in E^1 \text{ such that } s(x_n) = r(x_{n+1})\}\subseteq (E^1)^{\mathbb{Z}}$$
where $\sigma_E: X_E\longrightarrow X_E$ is the shift map with $\sigma(x)_n = x_{n+1}$ for all $n\in \mathbb{Z}$.\medskip

Two edge shifts $X_E$ and $X_F$ is {\it conjugate}, denoted $X_E\cong X_F$, if there is a homeomorphism $f: X_E\longrightarrow X_F$ such that the following diagram
$$\xymatrix{X_E\ar[d]_{\sigma_E} \ar[r]^f& X_F \ar[d]^{\sigma_F}\\
	X_E \ar[r]^f & X_F}$$
is commutative.

We are now able to present the precise form of Williams' celebrated criteria for the conjugacy of edge shifts  established in \cite{williams}.

\begin{theorem}[{\sc Williams}~\cite{williams}]\label{willimove}
Let $E$ and $F$ be essential graphs, and let $A_E$ and $A_F$ be the adjacency matrices of $E$ and $F$, respectively. Then, the following statements are equivalent:\medskip

$(1)$ $X_E$ is conjugate to $X_F;$\medskip

$(2)$ $A_E\sim_{SSE}A_F$;\medskip

$(3)$ $E$ can be obtained from $F$ by a sequence of in-splittings, out-splittings,  in-amalgamations, and out-amalgamations.
\end{theorem}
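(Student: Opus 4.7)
The plan is to prove the cycle $(3) \Rightarrow (2) \Rightarrow (1) \Rightarrow (3)$, following the classical strategy of Williams~\cite{williams}, as exposited in~\cite[Chapter 7]{lindmarcus}. Since strong shift equivalence is transitive and each of Moves (I) and (O) can be inverted by an amalgamation, it suffices to prove $(3) \Rightarrow (2)$ for a single in-splitting (the out-splitting case then following by transposition) and $(2) \Rightarrow (1)$ for a single elementary shift equivalence.

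For $(3) \Rightarrow (2)$, suppose $F$ is the in-splitting of $E$ along partitions $\{\mathscr{E}^v_1, \ldots, \mathscr{E}^v_{m(v)}\}$ of $r^{-1}(v)$ for each $v$. Define matrices $R \in M_{E^0 \times F^0}(\mathbb{N})$ by $R(u, v_i) = |\mathscr{E}^v_i \cap s^{-1}(u)|$ and $S \in M_{F^0 \times E^0}(\mathbb{N})$ by $S(v_i, w) = A_E(v, w)$. A direct computation gives $RS = A_E$ and $SR = A_F$, establishing $A_E \sim_{ES} A_F$. The out-splitting case follows by applying this to transpose graphs, since transposition interchanges Moves (I) and (O) and preserves elementary shift equivalence. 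Composing the equivalences along the given sequence of moves then yields $A_E \sim_{SSE} A_F$.

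For $(2) \Rightarrow (1)$, assume an elementary shift equivalence $A_E = RS$, $A_F = SR$. The factorization $A_E(u,w) = \sum_v R(u,v) S(v,w)$ means that each edge of $E$ admits a unique expression as the concatenation of an ``$R$-edge'' from $E^0$ to a middle vertex labelled by $F^0$ followed by an ``$S$-edge'' from that middle vertex back to $E^0$. Formalising this produces a bipartite graph $G$ with vertex set $E^0 \sqcup F^0$, together with a homeomorphism $\psi_E \colon X_E \to X_G$ identifying $\sigma_E$ with the square of $\sigma_G$ restricted to bi-infinite walks beginning at an $E^0$-vertex. The dual factorization $A_F = SR$ yields an analogous $\psi_F \colon X_F \to X_G$, now beginning at $F^0$-vertices. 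The composition $\psi_F^{-1} \circ \psi_E$, after adjusting the offset between the two starting conventions, provides the required conjugacy $X_E \cong X_F$.

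The principal obstacle is $(1) \Rightarrow (3)$, namely the Decomposition Theorem of Williams. The plan is: by the Curtis--Hedlund--Lyndon theorem, every conjugacy $\varphi \colon X_E \to X_F$ is realized by a sliding block code with some memory $m \geq 0$ and anticipation $a \geq 0$. We induct on $m+a$. In the base case $m = a = 0$, the conjugacy is a one-block code, and a direct combinatorial argument shows it arises from an in-amalgamation or out-amalgamation. For the inductive step, one partitions $r^{-1}(v)$ (respectively $s^{-1}(v)$) in $F$ according to which $(m+a)$-block of $\varphi^{-1}$ can legally precede (respectively follow) each incoming edge; this partition satisfies the hypotheses of Definition~\ref{def:insplit}, and performing the corresponding splitting produces a graph $F'$ through which $\varphi$ factors as a conjugacy $X_E \to X_{F'}$ with strictly smaller $m+a$. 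The delicate point is to verify that the induced block code really does descend to the split graph and that the memory and anticipation drop as claimed; once this is done, iterating expresses $\varphi$ as a finite composition of splittings and amalgamations, completing the proof.
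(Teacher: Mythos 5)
This theorem carries no proof in the paper --- it is quoted from Williams \cite{williams} and \cite[Chapter 7]{lindmarcus} --- so your proposal is to be measured against the classical argument, which is indeed the route you follow. Two concrete problems remain. First, in $(3)\Rightarrow(2)$ your matrix $S$ is wrong. With $R(u,v_i)=|\mathscr{E}^v_i\cap s^{-1}(u)|$ and $S(v_i,w)=A_E(v,w)$ one gets
\[
(RS)(u,w)=\sum_{v}\Bigl(\sum_i|\mathscr{E}^v_i\cap s^{-1}(u)|\Bigr)A_E(v,w)=\sum_v A_E(u,v)A_E(v,w)=A_E^2(u,w),
\]
not $A_E(u,w)$, and likewise $SR\neq A_F$. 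The correct companion to $R$ is the transposed division matrix $S(v_i,w)=\delta_{v,w}$; then $(RS)(u,w)=\sum_i|\mathscr{E}^w_i\cap s^{-1}(u)|=A_E(u,w)$ and $(SR)(v_i,w_j)=R(v,w_j)=|\mathscr{E}^w_j\cap s^{-1}(v)|=A_F(v_i,w_j)$, which matches Definition~\ref{def:insplit}. The fix is one line, but as written the ``direct computation'' fails. Relatedly, in $(2)\Rightarrow(1)$ the decomposition of an $E$-edge into an $R$-edge followed by an $S$-edge is not ``unique'': the identity $A_E(u,w)=\sum_v R(u,v)S(v,w)$ only guarantees that a bijection between $E$-edges and such pairs can be \emph{chosen}, and the bipartite code depends on that choice; this should be stated.

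Second, and more seriously, $(1)\Rightarrow(3)$ is a plan rather than a proof. The Decomposition Theorem is the entire content of Williams' theorem, and the step you yourself flag as delicate --- that partitioning $r^{-1}(v)$ or $s^{-1}(v)$ according to the blocks of $\varphi^{-1}$ produces a legal splitting through which $\varphi$ factors with strictly smaller memory plus anticipation --- is precisely where all the work lies; it occupies most of \cite[\S 7.1]{lindmarcus}. Until that induction is carried out (including the correct base case: a conjugacy that is $1$-block in both directions is a graph isomorphism, not an amalgamation), the cycle of implications is not closed. Since the theorem is classical and the paper only cites it, the honest options are either to cite \cite[Theorems 7.1.2 and 7.2.7]{lindmarcus} for $(1)\Leftrightarrow(3)$ and $(2)\Rightarrow(1)$, or to write the induction out in full.
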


It is worth mentioning the following example.

\begin{example}[{\cite[Example 7.3.13]{lindmarcus}}]\label{SE-SSE-Exam}
Let $E$ and $E$ be the following graphs:
\begin{center}
$E=\ \ \ \ \ \xymatrix{\bullet^v\ar@(ld,lu)\ar@/^.5pc/[r]^{k\ edges}        &\bullet^w \ar@(ru,rd)\ar@/^.5pc/[l]^{ (k-1)\ edges}}$ \quad\quad  and \quad\quad
	$F=\ \ \ \ \ \xymatrix{\bullet^v\ar@(ld,lu)\ar@/^.5pc/[r]^{k(k-1)\ edges}        &\bullet^w\ar@(ru,rd) \ar@/^.5pc/[l]}$
\end{center}	
We have that the adjacency matrices of $E$ and $F$ are respectively the following matrices: 
\begin{center}
$A_E =\left(
\begin{array}{cccc}
1 &k\\
k-1 &1 \\
\end{array}
\right)$\quad\quad and \quad\quad $A_F =\left(
\begin{array}{cccc}
1 &k(k-1)\\
1 &1 \\
\end{array}
\right).$ 	
\end{center}
Then $A_E \sim_{SE} A_F$ for all $k\ge 1$ and $A_E \sim_{SSE} A_F$ for all $1\le k\le 3$, but it is not known whether $A_E \sim_{SSE} A_F$ when $k\ge 4$.
\end{example}

Considering how difficult this is, anything new we can say about
strong shift equivalence while merely assuming shift equivalence is bound to be interesting. The following conjecture was proposed by Williams \cite{willwrong} in $1974$.

\begin{conj}[{\sc Williams' conjecture \cite{willwrong}}]\label{Wilconj}
$A_E\sim_{SE}A_F\ \Longleftrightarrow\ A_E\sim_{SSE}A_F$ for essential graphs $E$ and $F$.
\end{conj}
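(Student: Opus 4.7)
The final statement is Williams' conjecture in its full generality (Conjecture~\ref{Wilconj}): for arbitrary essential graphs $E,F$, shift equivalence of $A_E$ and $A_F$ should imply their strong shift equivalence. As the paper's own introduction explicitly notes, this conjecture is \emph{false}: Kim and Roush produced explicit counterexamples in \cite{kimroush99}. Consequently no correct proof of the statement as worded can exist, and any honest proposal must be diagnostic --- setting up a candidate approach, then identifying where and why it must fail in the unrestricted setting. I sketch such a proposal below.

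The natural starting point is the bridge from symbolic dynamics through graded $K$-theory to talented monoids. Assuming $A_E\sim_{SE}A_F$, Krieger's dimension-group theorem produces an order-isomorphism $\Delta_{A_E^t}\cong\Delta_{A_F^t}$. Via the identifications $\Delta_{A_E^t}\cong K_0^{\gr}(L_K(E))$ and $T_E\cong\mathcal{V}^{\gr}(L_K(E))$ recalled in Section~\ref{sec2}, this promotes to a $\mathbb{Z}$-monoid isomorphism $\varphi:T_E\to T_F$. By Williams' theorem (Theorem~\ref{willimove}), proving strong shift equivalence is the same as exhibiting a finite sequence of in/out-splittings and amalgamations transforming $E$ into $F$, so the task becomes manufacturing such a sequence from $\varphi$.

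The only plausible general strategy is a canonical-form argument: extract from $T_E$ a distinguished reduced graph $E_0$ by a sequence of move-(I) and move-(O) simplifications controlled by the monoid structure, verify $T_{E_0}\cong T_E$, and prove uniqueness --- that $T_{E_0}\cong T_{F_0}$ forces $E_0\cong F_0$. Feeding $\varphi$ into this machinery would yield the desired chain of graph moves. The main obstacle, and the precise point where the programme \emph{must} fail, is the uniqueness step: in full generality $T_E$ simply does not carry enough combinatorial information to pin down a canonical form up to Williams' moves. The Kim--Roush counterexamples exhibit pairs $(E,F)$ with isomorphic ordered dimension groups, and hence isomorphic talented monoids, but non-conjugate edge shifts; so any canonical-form map $T\mapsto E_0$ has unavoidable ambiguity for graphs outside a tightly controlled class.

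Thus the honest summary of a proof plan for the final statement as worded is that one can carry out the first two steps --- build $\varphi:T_E\to T_F$ and translate the goal into Williams' graph-move language --- but the reconstruction step is obstructed by the Kim--Roush phenomenon, and the conjecture itself is therefore provably unprovable in this generality. The only productive refinement, and the one pursued in the body of this paper, is to single out subclasses of essential graphs (here, those of Gelfand--Kirillov dimension three) on which additional number-theoretic rigidity of $T_E$ restores the uniqueness of the canonical form and salvages the implication $\mathrm{SE}\Rightarrow\mathrm{SSE}$.
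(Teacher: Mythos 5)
Your assessment is correct and matches the paper's own treatment: the statement labelled Conjecture~\ref{Wilconj} is not proved in the paper --- it is recorded precisely as a conjecture that Kim and Roush \cite{kimroush99} disproved in general, so no proof of it as worded can exist. Your sketch of the salvageable strategy (shift equivalence $\Rightarrow$ Krieger dimension group isomorphism $\Rightarrow$ order-preserving isomorphism of $K_0^{\gr}$ $\Rightarrow$ $\mathbb{Z}$-monoid isomorphism of talented monoids $\Rightarrow$ reconstruction of a chain of Williams moves) is exactly the route the paper takes in Theorem~\ref{mainthm} for the restricted class of graphs of Gelfand--Kirillov dimension three, where the normal form of Definition~\ref{norform} and the numerical invariants $N_{(i,j)}^E(c)$ supply the rigidity that fails in general. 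Your diagnosis of where the general argument breaks --- the non-uniqueness of a canonical form recoverable from $T_E$ alone, witnessed by the Kim--Roush examples --- is the correct identification of the obstruction.
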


It took 25 years before a counterexample  was found by Kim and Roush \cite{kimroush99}. More precisely, Kim and Roush showed that there are essential graphs $E$ and $F$ of order $7$ with $A_E\sim_{SE} A_F$ while $A_E\nsim_{SSE} A_F$. However, identifying classes of edge shifts for which shift equivalence and strong shift equivalence are equivalent is an open question.

Although strong shift equivalence characterizes conjugacy of shifts of finite type, there is no general algorithm for deciding whether $X_A$ and $X_B$ are strongly shift equivalent, even for $2 \times 2$ matrices $A,B.$ The weaker notion of shift equivalence  is easier to analyze, as Krieger established.  In~\cite{krieger}, he 
proposed an invariant for classifying the irreducible shifts of finite type up to shift equivalence. Surprisingly, Krieger's dimension group in symbolic dynamics turns out to be expressible as the graded Grothendieck group of a  Leavitt path algebra. 

Let $A$ be a nonnegative integral $n\times n$-matrix. Consider the sequence of free ordered abelian groups $\mathbb Z^n \to \mathbb Z^n \to \cdots$, where the ordering in $\mathbb Z^n$ is defined point-wise (i.e.,  the positive cone is $\mathbb N^n$).  Then  $A$ acts as an order-preserving group homomorphism on this sequence, as follows: 
\[\mathbb Z^n \stackrel{A}{\longrightarrow} \mathbb Z^n \stackrel{A}{\longrightarrow}  \mathbb Z^n \stackrel{A}{\longrightarrow} \cdots.
\]
The direct limit of this system, $\Delta_A:= \varinjlim_{A} \mathbb Z^n$, along with its positive cone, $\Delta^+_A$, and the automorphism which is induced by $A$ on the direct limit, 
$\delta_A:\Delta_A \rightarrow \Delta_A$, is the invariant considered by Krieger, now known as  \emph{Krieger's dimension group}.  Following~\cite{lindmarcus}, we denote this triple by $(\Delta_A, \Delta_A^+,  \delta_A)$. 

The following theorem was proved by Krieger (\cite[Theorem~4.2]{krieger}, 
see also~\cite[\S7.5]{lindmarcus} for a detailed algebraic treatment). 

\begin{theorem}[{\cite[Theorem~4.2]{krieger}}]\label{kriegeror}
	Let $A$ and $B$ be two square nonnegative integer matrices. Then, $A$ and $B$ are shift equivalent if and only if 
	\[(\Delta_A, \Delta_A^+, \delta_A) \cong (\Delta_B, \Delta_B^+, \delta_B).\]
\end{theorem}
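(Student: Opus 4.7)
The plan is to pass between shift equivalence and dimension-group isomorphism via the universal property of the direct limits $\Delta_A=\varinjlim_A \mathbb{Z}^n$. For the \emph{forward direction}, suppose $A\in M_n(\mathbb{N})$ and $B\in M_m(\mathbb{N})$ are shift equivalent with lag $\ell$ via nonnegative matrices $R\in M_{n\times m}(\mathbb{N})$ and $S\in M_{m\times n}(\mathbb{N})$. Viewing $R$ as a group homomorphism $\mathbb{Z}^m\to\mathbb{Z}^n$ by left multiplication, the intertwining identity $AR=RB$ expresses $R$ as a morphism of inductive systems $(\mathbb{Z}^m,B)\to(\mathbb{Z}^n,A)$, and so induces $\bar R:\Delta_B\to\Delta_A$. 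Nonnegativity of $R$ forces $\bar R$ to send $\Delta_B^+$ into $\Delta_A^+$, and the intertwining descends to $\bar R\,\delta_B=\delta_A\,\bar R$. Symmetrically, $S$ produces $\bar S:\Delta_A\to\Delta_B$. The composition identities $RS=A^\ell$ and $SR=B^\ell$ translate on the limits into $\bar R\bar S=\delta_A^\ell$ and $\bar S\bar R=\delta_B^\ell$, so $\bar R$ is an isomorphism with inverse $\delta_B^{-\ell}\bar S$. Because $B\ge 0$ makes $\delta_B$ an order automorphism (both $\delta_B$ and $\delta_B^{-1}$ carry nonnegative representatives to nonnegative representatives), both $\bar R$ and its inverse preserve positivity, yielding the isomorphism of triples.

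For the \emph{backward direction}, suppose $\phi:(\Delta_A,\Delta_A^+,\delta_A)\to(\Delta_B,\Delta_B^+,\delta_B)$ is an isomorphism. The standard basis vectors $e_1,\dots,e_n\in\mathbb{Z}^n$ give positive classes $[e_i,0]\in\Delta_A^+$. Since $\phi$ preserves the positive cone, each $\phi([e_i,0])$ is represented at some stage by a nonnegative vector $s_i\in\mathbb{N}^m$; by applying powers of $B$ we may raise all of these to a common stage $k$ and assemble the $s_i$ as the columns of a matrix $S\in M_{m\times n}(\mathbb{N})$, so that $\phi([v,0])=[Sv,k]$ for every $v\in\mathbb{Z}^n$. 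Applying the dual construction to $\phi^{-1}$ produces $R\in M_{n\times m}(\mathbb{N})$ with $\phi^{-1}([u,0])=[Ru,k']$. The intertwining $\phi\delta_A=\delta_B\phi$ unwinds to $[SAv,k]=[BSv,k]$ in $\Delta_B$ for each basis $v$, which means some power $B^j$ equalizes $B^jSA=B^{j+1}S$ as matrices; absorbing that power into $S$ we arrange $SA=BS$ on the nose, and symmetrically $AR=RB$. Finally, $\phi^{-1}\phi=\mathrm{id}$ and $\phi\phi^{-1}=\mathrm{id}$ unwind to $RS=A^{\ell}$ and $SR=B^{\ell}$ for a common lag $\ell$ obtained after one further absorption of powers of $A$ and $B$.

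The main obstacle is the coherent \emph{bookkeeping of stages} in the backward direction: the matrices $R$ and $S$ extracted from $\phi$ and $\phi^{-1}$ are only defined up to multiplication by a power of $A$ or $B$, and the four identities $AR=RB$, $SA=BS$, $RS=A^\ell$, $SR=B^\ell$ must all hold simultaneously with the \textbf{same} lag~$\ell$. One must select the stages $k,k'$ large enough and replace $R,S$ by $A^pR$, $B^qS$ for carefully chosen $p,q$ so that the four equations line up on the nose; the finiteness of the index set (which lets finitely many limit equalities lift to a single matrix identity) and the fact that positivity is monotone along the system are precisely what make this possible. Once this stage-matching is executed, linearity of the construction, compatibility with $\delta_A,\delta_B$, and nonnegativity of the resulting matrices all follow formally from the ordered module structure of the dimension triple.
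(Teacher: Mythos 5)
The paper does not prove this statement: it is imported verbatim as Krieger's Theorem~4.2, with a pointer to the algebraic treatment in Lind--Marcus, \S 7.5, so there is no in-paper argument to compare against. Your proposal is essentially the standard proof of that classical result, and it is correct. In the forward direction, the intertwinings $AR=RB$ and $SA=BS$ do induce order-preserving maps $\bar R\colon\Delta_B\to\Delta_A$ and $\bar S\colon\Delta_A\to\Delta_B$ commuting with the shift automorphisms, and $RS=A^{\ell}$, $SR=B^{\ell}$ descend to $\bar R\bar S=\delta_A^{\ell}$, $\bar S\bar R=\delta_B^{\ell}$; since $\delta_A,\delta_B$ and their inverses are order automorphisms of the respective limits (positivity of a class is a tail condition, unaffected by shifting the stage), $\bar R$ is an order isomorphism of triples with inverse $\delta_B^{-\ell}\bar S$. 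In the backward direction your extraction of $S$ and $R$ from $\phi$ and $\phi^{-1}$ via nonnegative representatives at a common stage is the right mechanism, and you correctly identify the only delicate point: the four identities must be arranged to hold with a single lag $\ell$, which is achieved by repeatedly replacing $R$ by $A^{p}R$ and $S$ by $B^{q}S$ (these replacements preserve nonnegativity and the intertwining relations, since e.g. $A(A^{p}R)=(A^{p}R)B$) until the finitely many limit equalities lift to genuine matrix identities with matching exponents. One cosmetic caveat: the paper's convention (following Wagoner) has $\delta_A$ acting as multiplication by $x^{-1}$, so depending on orientation your $\bar R\bar S$ may equal $\delta_A^{-\ell}$ rather than $\delta_A^{\ell}$; this sign of the exponent has no bearing on the validity of the argument.
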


Wagoner noted that the induced structure on $\Delta_A$ by the automorphism $\delta_A$ makes $\Delta_A$ a $\mathbb Z[x,x^{-1}]$-module, where the action of $\delta_A$ is  multiplication by $x^{-1}$ (see \cite[Lemma 11]{hazli}).  This fact  was systematically used in~\cite{wago1,wago2} (see also~\cite[\S3]{boyle}).

Let $E$ be a row-finite graph and $K$ a field. Recall that the graded Grothendieck group $K_0^{\gr}(L_K(E))$  has a natural $\mathbb Z[x,x^{-1}]$-module structure.  Theorem~\ref{mmpags} shows that the graded Grothendieck group of the Leavitt path algebra $L_K(E)$  coincides with the Krieger's dimension group associated to $A_E^t$, the transpose of the adjacency matrix $A_E$: 
\[\big(K_0^{\gr}(L_K(E)),(K_0^{\gr}(L_K(E))^+\big ) \cong (\Delta_{A^t_E}, \Delta_{A^t_E}^+).\] 
This will provide a link between the theory of Leavitt path algebras and symbolic dynamics~\cite{arapar,hazd,hazbk}.

\begin{theorem}[{\cite{arapar, hazli}}]\label{mmpags}
Let $E$ be an essential graph with adjacency matrix $A_E$ and $K$ a field. Then, there is a $\mathbb Z[x,x^{-1}]$-module isomorphism 
	$\phi:K^{\gr}_0(L_K(E)) \longrightarrow \Delta_{A^t_E}$ such that $\phi(K^{\gr}_0(L_K(E))^+)=\Delta_{A^t_E}^+$. 
\end{theorem}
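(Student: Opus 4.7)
The plan is to factor the isomorphism through the talented monoid $T_E$. By \cite[Proposition 5.7]{arahazrat}, there is a natural $\mathbb Z$-monoid isomorphism $T_E \cong \mathcal V^{\gr}(L_K(E))$, so the graded Grothendieck group $K^{\gr}_0(L_K(E))$, together with its positive cone and $\mathbb Z[x,x^{-1}]$-action, is nothing other than the group completion of $T_E$ as a $\mathbb Z$-monoid. Hence it is enough to construct an order- and $\mathbb Z$-equivariant isomorphism from the group completion of $T_E$ to $\Delta_{A^t_E}$.

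The key observation driving the construction is that the defining relation
\[
v(i) \;=\; \sum_{w \in E^0} A_E(v,w)\,w(i+1)
\]
of $T_E$ (valid for every non-sink vertex $v$) matches exactly the telescoping identification in the direct system $(\mathbb Z^n, A^t_E)$: writing $\mathbf e_v$ for the standard basis vector attached to $v \in E^0$, one has $A^t_E\,\mathbf e_v = \sum_w A_E(v,w)\,\mathbf e_w$, so $[\mathbf e_v; i] = [A^t_E\,\mathbf e_v; i+1]$ in $\Delta_{A^t_E}$. Because $E$ is essential, every vertex is a non-sink, so $\delta_{A^t_E}$ extends to an automorphism of $\Delta_{A^t_E}$ which can be re-indexed by all of $\mathbb Z$. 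I would then define
\[
\phi: T_E \longrightarrow \Delta^+_{A^t_E}, \qquad \phi\bigl(v(i)\bigr) \;=\; [\mathbf e_v; i],
\]
extended additively on generators. The relation-matching above gives well-definedness, and ${}^1 v(i) = v(i+1)$ is sent to $[\mathbf e_v; i+1]$, which is $x \cdot [\mathbf e_v; i]$ under Wagoner's $\mathbb Z[x,x^{-1}]$-structure on $\Delta_{A^t_E}$. Thus $\phi$ is $\mathbb Z$-equivariant, and it extends to a $\mathbb Z[x,x^{-1}]$-module homomorphism on group completions.

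Surjectivity onto $\Delta^+_{A^t_E}$ is immediate, since every positive element is represented at some level $k$ by a vector $\sum_v n_v \mathbf e_v$ with $n_v \in \mathbb N$, and this is $\phi\bigl(\sum_v n_v\, v(k)\bigr)$; together with $\mathbb Z$-equivariance this yields surjectivity onto all of $\Delta_{A^t_E}$. Order preservation then follows formally, because $T_E$ generates the positive cone of its Grothendieck group by cancellativity of $T_E$ and $\phi$ maps $T_E$ onto $\Delta^+_{A^t_E}$.

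The main obstacle is injectivity: one must verify that if two elements of $T_E$ have equal images in $\Delta^+_{A^t_E}$, they already coincide in $T_E$. My approach here is to pass to the covering graph $\overline E$, using Theorem~\ref{hgfgfgggf} to identify $T_E$ with $M_{\overline E}$, so that by the confluence Lemma~\ref{confuu}(1) equality in $T_E$ is detected by the existence of a common flow in $F_{\overline E}$. Comparing this with the direct-limit description, where $[\mathbf x; k] = [\mathbf y; j]$ in $\Delta_{A^t_E}$ precisely when $(A^t_E)^{N-k}\mathbf x = (A^t_E)^{N-j}\mathbf y$ for large $N$, one sees that both equivalences are generated by repeated application of the rewrite $v(i) \mapsto \sum_w A_E(v,w)\,w(i+1)$, namely by multiplying by $A^t_E$ and shifting levels. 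Making this translation between vertex-flow moves in $\overline E$ and powers of $A^t_E$ in the direct system explicit is the technical heart of the proof; once it is in place, $\phi$ restricts on $T_E$ to a $\mathbb Z$-monoid isomorphism onto $\Delta^+_{A^t_E}$ and the theorem follows.
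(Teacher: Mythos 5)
Your proposal is essentially correct, but note that the paper does not prove Theorem~\ref{mmpags} at all: it is quoted from \cite{arapar} and \cite{hazli}, so there is no in-paper proof to compare against. What you have written is, in substance, the standard argument from those references: identify $K_0^{\gr}(L_K(E))$ with the group completion of $T_E$ via $\mathcal V^{\gr}(L_K(E))\cong T_E$, send $v(i)$ to the class of $\mathbf e_v$ at level $i$ of the direct system $(\mathbb Z^{|E^0|},A_E^t)$, and check that the defining relation $v(i)=\sum_w A_E(v,w)\,w(i+1)$ is exactly the telescoping identification $[\mathbf e_v;i]=[A_E^t\mathbf e_v;i+1]$. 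Your computation $A_E^t\mathbf e_v=\sum_w A_E(v,w)\mathbf e_w$ is the correct reason the transpose appears, and your reduction of injectivity to Lemma~\ref{confuu} via $T_E\cong M_{\overline E}$ is the right mechanism.

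Two points deserve attention. First, the step you flag as ``the technical heart'' (translating vertex-flow moves in $\overline E$ into powers of $A_E^t$) is genuinely the only nontrivial verification, and you leave it as a sketch; to close it you need the observation that, because $E$ is essential (no sinks), any element of $F_{\overline E}$ supported on levels $\le N$ can be flowed completely to level $N$, where it becomes the vector obtained by applying the appropriate powers of $A_E^t$, so that confluence at a common level is exactly equality of $(A_E^t)^{N-k}\mathbf x$ and $(A_E^t)^{N-j}\mathbf y$. Once that is said, injectivity of $\phi|_{T_E}$ follows, and the passage to group completions is injective because $\Delta_{A_E^t}^+$ is cancellative (being the positive cone of a group), as you implicitly use. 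Second, be careful with the sign convention on the $\mathbb Z[x,x^{-1}]$-action: the paper fixes $\delta_{A}$ to act as multiplication by $x^{-1}$, so your assertion that $[\mathbf e_v;i+1]=x\cdot[\mathbf e_v;i]$ is consistent with that convention, but an author using the opposite convention would need to precompose with $x\mapsto x^{-1}$; this does not affect the truth of the theorem, only the bookkeeping. Neither issue is a gap in the mathematics, only in the level of detail.
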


It is easy to see that two matrices $A$ and $B$ are shift equivalent if and only if so are $A^t$ and $B^t$. Combining this with Theorems~\ref{kriegeror} and \ref{mmpags}, we have the following corollary, which will be used to prove our main result (Theorem \ref{mainthm}).

\begin{corollary}[{\cite[Corollary 12]{hazli}}]\label{h99}
Let $E$ and $F$ be essential graphs, and $A_E$ and $A_F$ their adjacency matrices, respectively. Let $K$ be an arbitrary field. Then
$A_E$ is shift equivalent to $A_F$ if and only if $K_0^{\gr}(L_K(E)) \cong K_0^{\gr}(L_K(F))$ via an order-preserving $\mathbb Z[x,x^{-1}]$-module isomorphism.
\end{corollary}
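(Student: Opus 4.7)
The plan is to assemble the three prior results — the transposition symmetry of shift equivalence, Krieger's dimension-group theorem, and the identification of $K_0^{\gr}(L_K(E))$ with $\Delta_{A_E^t}$ — into a single chain of equivalences. The argument is formal once we check that the three steps fit together compatibly on the level of the $\mathbb{Z}[x,x^{-1}]$-module structure.

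First I would verify that shift equivalence of nonnegative integer matrices is preserved under transposition. If witnesses $(R,S)$ and a lag $l\ge 1$ give $A^l=RS$, $B^l=SR$, $AR=RB$, and $SA=BS$, then transposing all four identities yields $(A^t)^l=S^tR^t$, $(B^t)^l=R^tS^t$, $S^tB^t=A^tS^t$, and $B^tR^t=R^tA^t$, so $A^t\sim_{SE}B^t$ with witnesses $(S^t,R^t)$ at the same lag. In particular, $A_E\sim_{SE}A_F$ if and only if $A_E^t\sim_{SE}A_F^t$.

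Next I would reinterpret Krieger's triple $(\Delta_A,\Delta_A^+,\delta_A)$ as an ordered $\mathbb{Z}[x,x^{-1}]$-module. Following Wagoner's observation quoted right after Theorem~\ref{kriegeror}, the $\mathbb{Z}[x,x^{-1}]$-module structure on $\Delta_A$ is given by letting $x^{-1}$ act as the automorphism $\delta_A$ induced by $A$ on the direct limit $\varinjlim_A\mathbb{Z}^n$. Consequently, a triple isomorphism $(\Delta_A,\Delta_A^+,\delta_A)\cong(\Delta_B,\Delta_B^+,\delta_B)$ is precisely the same datum as an order-preserving $\mathbb{Z}[x,x^{-1}]$-module isomorphism $\Delta_A\cong\Delta_B$ (group isomorphism plus intertwining with multiplication by $x^{-1}$ plus preservation of the positive cone). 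Applied to $A=A_E^t$ and $B=A_F^t$, Theorem~\ref{kriegeror} thus becomes: $A_E^t\sim_{SE}A_F^t$ if and only if $\Delta_{A_E^t}\cong\Delta_{A_F^t}$ as ordered $\mathbb{Z}[x,x^{-1}]$-modules.

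Finally, Theorem~\ref{mmpags} supplies ordered $\mathbb{Z}[x,x^{-1}]$-module isomorphisms $K_0^{\gr}(L_K(E))\cong\Delta_{A_E^t}$ and $K_0^{\gr}(L_K(F))\cong\Delta_{A_F^t}$, so the latter condition is equivalent to $K_0^{\gr}(L_K(E))\cong K_0^{\gr}(L_K(F))$ as ordered $\mathbb{Z}[x,x^{-1}]$-modules. Chaining the three equivalences yields the corollary. There is no genuine obstacle here: this is a bookkeeping statement, and the only point that deserves care is making sure the $\delta_A$-action in Krieger's triple is correctly identified with the $x^{-1}$-action coming from the $\mathbb{Z}$-grading, which is exactly the content of Wagoner's remark. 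The argument works verbatim in both directions, giving the desired ``if and only if''.
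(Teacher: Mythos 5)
Your proof is correct and follows exactly the route the paper takes: the paper derives this corollary by observing that shift equivalence is preserved under transposition and then combining Theorem~\ref{kriegeror} (Krieger) with Theorem~\ref{mmpags} (the identification $K_0^{\gr}(L_K(E))\cong\Delta_{A_E^t}$ as ordered $\mathbb{Z}[x,x^{-1}]$-modules). Your explicit verification of the transposed witnesses and of the equivalence between Krieger's triple isomorphism and an ordered $\mathbb{Z}[x,x^{-1}]$-module isomorphism is a correct elaboration of the details the paper leaves implicit.
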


\section{The dynamics of graphs of Gelfand-Kirillov dimension 3} \label{sec3}
In this section, we show that the class of graphs with disjoint cycles is closed under shift equivalence and shift equivalence preserves the cycle structure of these graphs (Theorem \ref{corcycle1}). Also, we provide number-theoretic criteria for graphs of Gelfand-Kirillov dimension three to be strong shift equivalent (Theorems \ref{numtheo-crite-sse1} and \ref{numtheo-crite-sse2}).\medskip


Let $E$ be a row-finite graph. We define a  preorder $\le$ on $E^0$ given by:
\begin{center}
$w\le v$ in the case there exists a path $p\in \text{Path}(E)$ such that $s(p) = v$ and $r(p) = w$.	
\end{center}
(We will sometimes equivalently write $v\ge w$ in this situation.) 
For any cycle $c$ in $E$ and any vertex $w\in E^0$, we write $w \le c$ in the case there is a path $p\in \text{Path}(E)$ such that $r(p) =w$ and $s(p) = v$ for some $v\in c^0$.

We denote by $\mathcal{C}_E$ the set of all distinct cycles in $E$.  We define a  preorder $\le$ on $\mathcal{C}_E$ given by:
\begin{center}
	$c\le c'$ in the case $w \le c'$ for some $v\in c^0$
\end{center}
for all $c, c'\in \mathcal{C}_E$.

The graph $E$ is called a {\it graph with disjoint cycles} if every vertex in $E$ is the base of at most one cycle. We note that $(\mathcal{C}_E, \le)$ is a partially ordered set for every graph with disjoint cycles $E$.

\begin{prop}\label{corcycle}
Let $E$ be a finite graph with disjoint cycles. Then there is a one-to-one  order preserving correspondence between cycles in the graph $E$ and $\mathbb Z$-order ideals $I$ of the talented monoid $T_E$ with unique sub-ideal $J$ such that $I/J$ is simple $n$-periodic, where $n$ is the length of the corresponding cycle. 	
\end{prop}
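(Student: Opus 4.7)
The plan is to make the correspondence explicit by sending each cycle $c$ of length $n$ to a pair $(I_c, J_c)$ of ideals, then to verify the characterizing properties. I would first invoke the standard lattice correspondence (a consequence of Lemma \ref{confuu} together with Theorem \ref{hgfgfgggf}) between $\mathbb{Z}$-order ideals of $T_E$ and hereditary saturated subsets of $E^0$: the ideal $I$ corresponds to $H_I := \{v \in E^0 \mid v \in I\}$, and an ideal is determined by its generating hereditary saturated set. For a cycle $c$ with vertices $c^0 = \{v_1, \ldots, v_n\}$, I would define
\[
I_c := \langle v_1, \ldots, v_n \rangle, \qquad J_c := \langle r(f) \mid f \in E^1,\ s(f)\in c^0,\ f \text{ not a cycle edge of } c \rangle.
\]

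Next I would verify that $I_c/J_c$ is the $\mathbb{Z}$-cyclic monoid of rank $n$ from Definition \ref{cycmond}, and in particular simple with minimal period $n$. For each vertex $v_i$ on $c$, the defining relation in $T_E$ reads $v_i(k) = v_{i+1}(k+1) + \sum_{f\text{ exit at }v_i} r(f)(k+1)$; since the exit-range terms lie in $J_c$, modulo $J_c$ this becomes $v_i(k) = v_{i+1}(k+1)$, which are exactly the relations defining $\bigoplus_{i=1}^n \mathbb{N}$ with cyclic shift. Simplicity follows because any nonzero element, together with its $n$ rotations under the $\mathbb{Z}$-action and the downward-closure axiom, generates the entire monoid; the minimal period is $n$ since the shift on the first coordinate has order $n$.

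For uniqueness of $J_c$, I would assume $J \subsetneq I_c$ with $I_c/J$ simple $n$-periodic and argue $J = J_c$ in two steps. First, $J \supseteq J_c$: if some vertex $u$ strictly below $c$ were absent from the hereditary saturated set $H'$ corresponding to $J$, then the disjoint-cycle hypothesis ensures no descendant of $u$ can reach $c^0$, and saturation cannot pull any $v_i \in c^0$ into $\overline{H' \cup \{u\}}$ because the cycle edge from $v_i$ goes to $v_{i+1} \notin \overline{H' \cup \{u\}}$. Consequently the image of $u$ generates a proper nontrivial $\mathbb{Z}$-order ideal of $I_c/J$, contradicting simplicity. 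Second, $J/J_c$ is a proper sub-ideal of the simple $I_c/J_c$, so $J/J_c = 0$ and $J = J_c$. Order preservation, $c \leq c'$ iff $I_c \subseteq I_{c'}$, is immediate because $c \leq c'$ is equivalent to $c^0 \subseteq \overline{(c')^0}$ (saturation cannot insert a cycle vertex into a hereditary saturated set that does not already contain its orbit), and by the lattice correspondence this is equivalent to $I_c \subseteq I_{c'}$.

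The main obstacle will be the surjectivity of $c \mapsto I_c$: given an abstract $I$ with a unique proper sub-ideal $J$ for which $I/J$ is simple $n$-periodic, I must recover a cycle. The plan is to identify $I/J$ with the talented monoid of the quotient graph on $H_I \setminus H_J$ and show that under the disjoint-cycle hypothesis the only way to obtain a simple $n$-periodic talented monoid is to have $H_I \setminus H_J$ equal to the vertex set of a single cycle of length $n$ all of whose exits land in $H_J$, so $I = I_c$. The delicate part is to rule out hereditary differences inflated by saturation (vertices above a cycle with all outgoing edges directed into $H_J$) that might a priori yield other simple $n$-periodic quotients; this requires a careful case analysis propagating through trees feeding into $c$ and any cycles that lie strictly below $c$.
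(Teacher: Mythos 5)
Your forward construction is essentially the paper's: the ideal $I_c=\langle v_1,\dots,v_n\rangle$ coincides with the paper's $\langle z\rangle$ for $z\in c^0$, and your $J_c$ generated by the exit ranges coincides (under the disjoint-cycle hypothesis, which forces exit ranges off $c$) with the paper's sub-ideal generated by the set of vertices strictly below $c$. The identification of $I_c/J_c$ with the $\mathbb Z$-cyclic monoid of rank $n$, the two-step uniqueness argument for $J_c$ (first $J\supseteq J_c$ by simplicity, then $J/J_c=0$ by simplicity of $I_c/J_c$), and the order-preservation argument are all at the same level of detail as the paper's and are sound.

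The genuine gap is the surjectivity of $c\mapsto I_c$, which you explicitly leave as a ``plan'' whose ``delicate part'' you acknowledge but do not execute. This is the substantive half of the statement: without it you have only an injective order embedding of $\mathcal C_E$ into the prescribed class of ideals, not a bijection. The case analysis you are worried about (quotients $H_I\setminus H_J$ that are comets with trees feeding into the cycle, or differences inflated by saturation) does not in fact need to be done by hand; the paper closes it with two specific inputs you would have to supply or cite. First, by \cite[Corollary 3.12]{alfi}, $T_G$ is simple and $n$-periodic if and only if $G$ has a unique cycle, of length $n$, to which every vertex connects; applied to $I/J\cong T_{H_I/H_J}$ this pins down the quotient graph without further argument. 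Second, every hereditary saturated subset $H$ is the hereditary saturated closure of the union of the vertex sets of its cycles and sinks, and for each maximal cycle $d$ of $H$ the set $K_d$ obtained by closing up all the other cycles and sinks yields a sub-ideal with simple periodic quotient; uniqueness of $J$ therefore forces $H_I$ to have a single maximal cycle $c$, whence $H_I=\overline{c^0}$ and $I=\langle z\rangle=I_c$. Until you incorporate these (or an equivalent argument showing that an arbitrary admissible $I$ is generated by a single cycle vertex), the proof is incomplete.
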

\begin{proof}
Let $c$ be a cycle in $E$ and $z\in c^0$. We show that the $\mathbb Z$-order ideal $\langle z \rangle$ of $T_E$ generated by $z$ has a unique sub-ideal with the quotient being simple $|C|$-periodic monoid. 

We first note that $\langle z \rangle = \{ x \in T_E \mid x \leq \sum_{i\in \mathbb Z} {}^{i} z \}= \langle \overline{\{z\}}\rangle $. Consider the sub-ideal $I_c$ of  $\langle z \rangle$, generated by the set 
\[S=\big \{v\in E^0 \mid c \ge v \text{ and } v\not \in c^0 \big \}.\] (Note that $S$ could be an empty set as well.)
Since for any $v\in S$, we have $z\geq v$ (i.e., there is a path from $z$ to $v$), and so $v\in \langle z \rangle$. It follows that $I_c \subseteq \langle z \rangle$. We show that $I_c$ is the unique $\mathbb Z$-sub-ideal of $\langle z \rangle$ such that the quotient is simple and periodic. We have  $I_c \not = \langle z \rangle$. For if $z\in I_c$, then since $I_c = \{x\in T_E\mid x \le \sum_{i\in \mathbb Z, v\in S} {}^i v\}$, it follows that for some $t\in T_E$, \[z+t=\sum_{i\in \mathbb Z, v\in S} {}^i v.\] Since $z\in c^0$, any transformation of $z$ produces a vertex on the cycle $c$. Now Lemma~\ref{confuu} gives that both sides of the equality transform to sum $l$ of some  vertices.  Combing these, it shows that there is a path from $v$ to the cycle $c$, which means that  $E$ is not a graph with disjoint cycles, a contradiction. Therefore, $I_c$ is a proper subset of $\langle z \rangle$. Next we consider the monoid $\langle z \rangle / I_c$. Since $E$ is a graph with disjoint cycles and $z\in c^0$, the transformation 
	\begin{equation}\label{noalgdhd}
	z\rightarrow_1 w(1)+l,
	\end{equation}
	gives a vertex $w\in c^0$ and $l$ which is a sum of vertices from $S$ (with suitable shifts). Repeating this $n=|C|$ times, we get that $z= z(n)+l'$, where $l'\in I_c$. This shows $[z]={}^n[z]$ in the quotient monoid $\langle z \rangle / I_c$.  Next we show that any element of $\langle z \rangle / I_c$ can be represented as a sum $\sum_{i\in \mathbb Z} z(i)$. Let $x\in \langle z \rangle$. Thus 
	\[x+t=\sum_{i\in \mathbb Z} {}^i z,\] for some $t\in T_E$. By Lemma~\ref{confuu} (1), $x+t \rightarrow l$ and $\sum_{i\in \mathbb Z} {}^i z \rightarrow l$. By Lemma~\ref{confuu} (2), we can write $l=l_1+l_2$, where $x\rightarrow l_1$. A similar argument as in (\ref{noalgdhd}), gives that $l_1= \sum_{j\in \mathbb Z} {}^j z + l'$, where $l'\in I_c$. Thus $[x]=\sum_{j\in \mathbb Z, v\in S} [z(j)]$ in the quotient monoid. Since each of $z(j)$ is $n$-periodic in $\langle z \rangle / I_c$, we see that $x$ is $n$-periodic. Since $[z]$ generates the monoid $\langle z \rangle / I_c$,  the quotient is simple as well. 
	
	Next we show that $I_c$ is the unique ideal with this property in $\langle z \rangle$. Suppose $J\subset \langle z \rangle$ is a proper sub-ideal such that $\langle z \rangle / J$ is also simple and $n$-periodic. Since $z\not \in J$, $[z]$ generates the $\mathbb Z$-monoid $\langle z \rangle / J$. Let $v\in S$ and consider the element $[v] \in \langle z \rangle / J$. If $[v]=0$ then $v\in J$. Otherwise, since $[z]$ is a generator, in $T_E$ we have 
	\[v+j_1=\sum_{i\in \mathbb Z} z(i) +j_2,\] for some $j_1 , j_2 \in J$. Once again a similar argument with Lemma~\ref{confuu}, shows that either there is a path from $v$ to $c$ (a contradiction) or $z$ appears in some  sum of elements of $J$, i.e, $z\in J$ again a contradiction. Thus for any $v\in S$, $[v]=0$, i.e., 
	$S\subseteq J$ and since $I_c$ is a proper maximal ideal, we have $J=I_c$. 
	
	Conversely, suppose $I\subseteq T_E$ is an $\mathbb Z$-order ideal  with unique sub-ideal $J$ such that $I/J$ is simple $n$-periodic. Then $I$ determines a hereditary saturated subset $H\subseteq E^0$ and $J$ a hereditary saturated subset $K\subseteq H$. We have $I\cong T_H$ and $J\cong T_K$ and $I/J\cong T_H/T_K\cong T_{H/K}$. Since $I/J$ is simple $n$-periodic, then $H/K$ is a comet graph with a unique cycle $c$ of length $n$. Since $J$ is unique, without ambiguity we assign this cycle $c$ to $I$. 
	
	Next we show these assignments give a one-to-one correspondence between the prescribed ideals in $T_E$ and cycles in $E$. 
	
	Starting from a cycle $c$, and choosing a vertex $z\in c^0$, we obtain the ideal $\langle z \rangle$ which by has a unique ideal $I_c$ with simple $|c|$-periodic quotient $\langle z \rangle/I_c$. From the construction of the unique ideal $I_c$ above, the quotient $\langle z \rangle/I_c$ determines $c$. For the converse, first note that if $H$ is a hereditary and saturated subset of $E$, then $$H=\overline{\Big \{\bigcup c^0\mid c \text{ a cycle or a sink in } H\Big \}}.$$ For any maximal cycle $d$ in $H$, let $$K_d = \overline{\Big\{\bigcup c^0\mid c \text{ a cycle or a sink in } H, c\not = d\Big\}}.$$ We then have $d\not \subseteq K_d$ and $H/K_d$ is a comet graph with $d$ as a unique cycle. Let $I$ be an ideal of $T_E$ such that there is a unique ideal $J$ such that $I/J$ is periodic. By previous argument, we see that the hereditary saturated subset $H$ correspond to $I$ has only one maximal cycle $c$. Choosing a vertex $z\in c^0$, clearly $I=\langle z \rangle$. This gives the converse of the argument.

	Finally it is immediate from the construction that the above correspondences are order preserving, thus finishing the proof.
\end{proof}

Let $M$ be a $\Gamma$-monoid. We say that $M$ has a {\it composition series} if there exists a chain of $\Gamma$-order ideals
\[0 = I_0 \subset I_1\subset I_2\subset \cdots \subset I_n = M,\] such that $I_{i+1}/I_i$, $0\le i\le n-1$, are simple $\Gamma$-monoids.
We say a composition series is of the {\it cyclic type} if all the simple quotients  $I_{i+1}/I_i$ are cyclic. In particular, in \cite[Lemma 4.1]{alfi} the authors showed that for every finite graph $E$, $T_E$ has a cyclic composition series if and only if $E$ is a graph with disjoint cycles and no sinks.

The following theorem provides that shift equivalence preserves the cycle structure of graphs with disjoint cycles.

\begin{thm}\label{corcycle1}
Let $E$ and $F$ be finite essential graphs such that $A_E\sim_{SE} A_F$. Then, $E$ is a graph with disjoint cycles if and only if so is $F$.  In this case, there exists an order-preserving bijection $\phi: \mathcal{C}_E\longrightarrow\mathcal{C}_F$ such that $|c| = |\phi(c)|$ for all $c\in \mathcal{C}_E$.
\end{thm}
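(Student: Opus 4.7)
The plan is to reduce the entire theorem to the $\mathbb Z$-monoid setting, where Proposition~\ref{corcycle} has already done the heavy lifting. First, I would extract from the hypothesis a $\mathbb Z$-monoid isomorphism $T_E\cong T_F$: by Corollary~\ref{h99}, the shift equivalence $A_E\sim_{SE} A_F$ yields an order-preserving $\mathbb Z[x,x^{-1}]$-module isomorphism $K_0^{\gr}(L_K(E))\cong K_0^{\gr}(L_K(F))$, and restricting to the positive cones together with the identification $T_E\cong \mathcal V^{\gr}(L_K(E))$ from \cite[Proposition 5.7]{arahazrat} (and likewise for $F$) produces the desired $\mathbb Z$-monoid isomorphism $T_E\cong T_F$.

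For the first assertion, I would invoke \cite[Lemma 4.1]{alfi}: for an essential graph $E$, the graph $E$ has disjoint cycles if and only if $T_E$ admits a cyclic composition series. Since the existence of a cyclic composition series is manifestly invariant under $\mathbb Z$-monoid isomorphism, this property passes from $T_E$ to $T_F$ and back, showing that $E$ is a graph with disjoint cycles if and only if $F$ is.

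For the second assertion, assume both graphs have disjoint cycles. Proposition~\ref{corcycle} then provides order-preserving bijections between $\mathcal C_E$ (resp.\ $\mathcal C_F$) and the family $\mathcal I_E$ (resp.\ $\mathcal I_F$) of those $\mathbb Z$-order ideals $I$ of $T_E$ (resp.\ $T_F$) which admit a unique sub-ideal $J$ with $I/J$ simple and $n$-periodic, the parameter $n$ recovering the length of the corresponding cycle. The isomorphism $T_E\cong T_F$ induces an inclusion-preserving bijection on lattices of $\mathbb Z$-order ideals; since ``simple'' and ``$n$-periodic'' are intrinsic $\mathbb Z$-monoid notions, this bijection carries $\mathcal I_E$ bijectively onto $\mathcal I_F$ while preserving the period. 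Composing with the two instances of Proposition~\ref{corcycle} yields the desired order-preserving bijection $\phi:\mathcal C_E\to \mathcal C_F$ with $|c|=|\phi(c)|$ for every $c\in \mathcal C_E$. The main conceptual work is already packaged in Proposition~\ref{corcycle} and Corollary~\ref{h99}; the only point requiring care is to confirm that each distinguishing feature of the ``cycle ideals'' (being a $\mathbb Z$-order ideal, uniqueness of the sub-ideal $J$, simplicity and $n$-periodicity of the quotient) depends only on the $\mathbb Z$-monoid structure, so that the isomorphism transports them faithfully.
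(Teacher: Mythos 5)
Your proposal is correct and follows essentially the same route as the paper's proof: Corollary~\ref{h99} to obtain the $\mathbb Z$-monoid isomorphism $T_E\cong T_F$, \cite[Lemma 4.1]{alfi} (applicable since the graphs are essential, hence sink-free) for the first assertion, and Proposition~\ref{corcycle} to transport the cycle structure via the intrinsic ideal-theoretic characterization. The only cosmetic difference is that you spell out the transport of the ``cycle ideals'' under the isomorphism, where the paper additionally cites \cite[Corollary 3.12]{alfi}; the underlying argument is the same.
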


\begin{proof}
Since $A_E\sim_{SE} A_F$ and by Corollary \ref{h99}, there is a $\mathbb Z$-isomorphism $T_E\cong T_F$. By \cite[Lemma 4.1]{alfi}, $T_E$ has a cyclic composition series if and only if $E$ is a graph with disjoint cycles. Then, since $T_E\cong T_F$ as $\mathbb{Z}$-monoids, $E$ is a graph with disjoint cycles if and only if so is $F$. The last statement immediately follows from Proposition~\ref{corcycle} and the fact that for any finite graph $G$, $T_G$ is simple and $n$-periodic if and only if $G$ is a graph having only one cycle $c$ of length $n$ such that all vertices are connected to some vertex in $c^0$ (by \cite[Corollary 3.12]{alfi}), thus finishing the proof.
\end{proof}


Given a field $K$ and a finitely generated $K$-algebra $A$. The \textit{Gelfand-Kirillov dimension} of $A$ ($\text{GKdim(A)}$ for short) is defined to be
$$\text{GKdim}(A) :=\limsup\limits_{n\rightarrow \infty}\log_{n}(\dim(V^{n})),$$
where $V$ is a finite dimensional subspace of $A$ that generates $A$ as an algebra over $K$. This definition is independent of the choice of $V$. 

In \cite{aajz:lpaofgkd} Alahmadi, Alsulami, Jain and Zelmanov  determined the Gelfand-Kirillov dimension of Leavitt path algebras of finite graphs. We should mention this result here. To do so, we need to recall useful notions of graph theory.

Let $E$ be a finite graph. For two cycles $c$ and $c'$ in $E$, we write $c\Rightarrow c'$ if there exists a path that starts in $c$ and ends in $c'$. A sequence of cycles $c_1, \cdots , c_k$ is a chain of length $k$ if $c_1\Rightarrow \cdots \Rightarrow c_k$. We say that such a chain has an {\it exit} if the cycle $c_k$ has an exit. Let $d_1$ be the maximal length of a chain of cycles in $E$, and let $d_2$ be the maximal length of a chain of cycles with an exit in $E$. For every field $K$, by \cite[Theorem 5]{aajz:lpaofgkd}, the Gelfand-Kirillov dimension of $L_K(E)$ is finite if and only if $E$ is a graph with disjoint cycles. In this case,  $\text{GKdim}(L_K(E)) =\max\{2d_1-1, 2d_2\}$. In particular, if, in addition, $E$ is essential, then $\text{GKdim}(L_K(E))$ is an odd number.	Moreover, the Gelfand-Kirillov dimension of $L_K(E)$ does not depend on the base field $K$. Motivated by these observations, we have the following definition.

\begin{deff}\label{graphwithfGK}
A {\it graph of finite Gelfand-Kirillov dimension} is a connected finite essential graph such that its Leavitt path algebra has finite Gelfand-Kirillov dimension.
\end{deff}

We note that a graph of Gelfand-Kirillov dimension one is a unique cycle. The following remark provides us with a complete description of graphs of Gelfand-Kirillov dimension three.

\begin{remdef}\label{rem-gwGK3}
A finite graph $E$ is a graph of Gelfand-Kirillov dimension $3$ if and only if $E$ is a connected essential graph consisting of $m+n$ disjoint cycles $C_1^E,\ldots , C_m^E$, $D_1^E,\ldots, D_n^E$ and the paths connecting from $C_i^E$ to $D_j^E$ for all $1\le i\le m$ and $1\le j\le n$. 

$$\xymatrixrowsep{0.4pc}\xymatrixcolsep{0.4pc}\xymatrix{&&\bullet\ar@/^0.5pc/[rd]&&&&&\bullet\ar@/^0.5pc/[rd]\\
	C^E_1&\bullet \ar@/^0.5pc/[ru]&&\bullet\ar@/^0.5pc/@{-->}[ld]\ar[rrr]\ar@/^.2pc/[rrd]&&&\bullet\ar@/^0.5pc/[ru]&&\bullet\ar@/^0.5pc/@{-->}[ld]&D^E_1\\
	&\ar@{..}[dd]&\bullet\ar@/^0.5pc/[lu]\ar[rrr]&&&\bullet\ar@/^.2pc/[rd]\ar@{-->}[rr]&&\bullet\ar@/^0.5pc/[lu]&\ar@{..}[dd]\\	&&&&\bullet\ar[ru]&&\bullet\ar@/^0.2pc/@{-->}[rd]&\\
	&&\bullet\ar@/^0.5pc/[rd]\ar@/_0.2pc/[urr]\ar@/^.5pc/[rrrd]&&&&&\bullet\ar@/^0.5pc/[rd]&\\
	C^E_m&\bullet\ar@/^0.5pc/[ru]&&\bullet\ar@/^0.5pc/@{-->}[ld]\ar[rr]&&\bullet\ar@/_.7pc/@{-->}[rrd]\ar@{-->}[r]&\bullet\ar@/^0.5pc/[ru]&&\bullet\ar@/^0.5pc/@{-->}[ld]&	D^E_n\\
	&&\bullet\ar@/^0.5pc/[lu]&&&&&\bullet\ar@/^0.5pc/[lu]}$$
\end{remdef}

For every graph of Gelfand-Kirillov dimension $3$,  we call $C^E_i$'s the {\it source cycles}, and $D^E_j$'s the {\it sink cycles}. 

A path $p=e_1\cdots e_t$ in $E$ is called a {\it trail} if $s(p)\in (C^E_i)^0$ for some $1\le i\le m$,  $r(p)\in (D^E_j)^0$ for some $1\le j\le n$, and $r(e_k)\notin \bigcup^n_{j=1}\bigcup^m_{i=1} (C^E_i)^0 \cup (D^E_j)^0$ for all $1\le k\le t$.

For every trail $p$, its {\it interior} is the set $p^0\setminus (\bigcup^n_{j=1}\bigcup^m_{i=1} (C^E_i)^0 \cup (D^E_j)^0)$. A vertex $v$ in $E$ is called an {\it interior vertex} if $v$ is in the interior of some trail. 

\begin{deff}\label{norform}
A graph $E$ of Gelfand-Kirillov dimension three is in \textit{normal form} if all trails of $E$ consist of exactly one edge, and for each source cycle $C^E$, all trails of $E$ starting from $C^E$ have the same source.
$$\xymatrixrowsep{0.4pc}\xymatrixcolsep{0.4pc}\xymatrix{&&\bullet\ar@/^0.5pc/[rd]&&&&&\bullet\ar@/^0.5pc/[rd]\\
	C^E_1&\bullet\ar@/^0.5pc/[ru]&&\bullet\ar@/^0.5pc/@{-->}[ld]\ar[rrr]\ar[rrrrd]\ar@/^.2pc/[rrrrddd]&&&\bullet\ar@/^0.5pc/[ru]&&\bullet\ar@/^0.5pc/@{-->}[ld]&D^E_1\\
	&\ar@{..}[dd]&\bullet\ar@/^0.5pc/[lu]&&&&&\bullet\ar@/^0.5pc/[lu]&\ar@{..}[dd]\\ &&&&&&\\
	&&\bullet\ar@/^0.5pc/[rd]\ar[]&&&&&\bullet\ar@/^0.5pc/[rd]&\\
	C^E_m&\bullet\ar@/^0.5pc/[ru]&&\bullet\ar@/^0.5pc/@{-->}[ld]\ar[rrrruuu]\ar[rrrrd]&&&\bullet\ar@/^0.5pc/[ru]&&\bullet\ar@/^0.5pc/@{-->}[ld]&D^E_n\\
	&&\bullet\ar@/^0.5pc/[lu]&&&&&\bullet\ar@/^0.5pc/[lu]}$$
\end{deff}

The following proposition gives that every graph of Gelfand-Kirillov dimension three can be transformed into a graph in normal form.

\begin{prop}\label{norform-prop}
Every graph of Gelfand-Kirillov dimension three can be transformed into a graph of Gelfand-Kirillov dimension three in normal form by a sequence of in-splittings, out-splittings,  in-amalgamations, and out-amalgamations.
\end{prop}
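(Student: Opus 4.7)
The plan is to achieve the normal form in two successive phases. Phase 1 removes all interior vertices, so that every trail has length one; Phase 2, performed inside the resulting graph, consolidates, within every source cycle, the starting vertices of the trails leaving it to a single vertex.

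For Phase 1, I would induct on the number of interior vertices. Since the cycles of $E$ are disjoint and no interior vertex lies on any cycle, the subgraph induced by the interior vertices is a finite DAG; consequently, some interior vertex $x$ has all of its out-edges terminating at sink-cycle vertices. Out-splitting $x$ with a sufficiently fine partition of $s^{-1}(x)$ reduces to the situation where every split copy of $x$ has exactly one out-edge, landing at a single sink-cycle vertex $w$. Writing $w'$ for the predecessor of $w$ in its sink cycle, the split copy of $x$ and $w'$ then have identical out-edge profiles (a single edge, to $w$), so an in-amalgamation of the two is legitimate by Definition \ref{def:insplit}. This merge absorbs the interior vertex into the sink cycle while preserving the length of that sink cycle, since $w'$ is only replaced by a new vertex playing exactly the same role in the cycle. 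Iterating exhausts every interior vertex and produces a graph in which every trail is a single edge.

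For Phase 2 I would work one source cycle at a time. Given a source cycle $C = v_1 \to v_2 \to \cdots \to v_k \to v_1$ in the post-Phase-1 graph, and a trail edge $e\colon v_i \to w$ with $i\neq 1$, I out-split at $v_i$ so as to isolate $e$ from the remaining out-edges of $v_i$. This produces an intermediate vertex $v_i^e$ through which the trail now runs as $v_{i-1}\to v_i^e \to w$, a length-two trail. Applying the Phase 1 procedure to $v_i^e$ contracts it back to a length-one trail, now starting from $v_{i-1}$. Iterating this shift $i-1$ times brings the starting vertex of the trail around the cycle to $v_1$; performing the same procedure for every trail that does not already start at $v_1$ establishes the second normal-form condition. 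The length-one property of the other trails is preserved throughout because each shift is a purely local operation involving only one source-cycle vertex and one sink-cycle vertex.

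The principal technical obstacle is bookkeeping. One must check that every proposed in-amalgamation is well-defined, which amounts to a bijective matching of the parallel out-edges of the two vertices being amalgamated---arranged, when required, by first applying a fine enough out-splitting so that each split copy of $x$ has multiplicity one on the common target. One must also check that each move keeps us inside the class of graphs of Gelfand-Kirillov dimension three: no new sinks or sources appear, the cycles remain disjoint, and the cycle structure is preserved (which is consistent with Theorem \ref{corcycle1}). Both are local verifications from Definitions \ref{def:insplit} and \ref{def:outsplit}. An induction on the pair consisting of the number of interior vertices, together with the summed cyclic distance from the designated vertex $v_1$ of the trail starts over all source cycles, then closes the argument.
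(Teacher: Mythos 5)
Your proposal is correct, and it runs on the same two engines as the paper's proof: out-splittings that isolate a single trail edge leaving a vertex, and in-amalgamations at a sink cycle that absorb the trail's penultimate vertex into the cycle while sliding the trail's range back one step along it. The packaging, however, is genuinely different. The paper works in three passes: it first regularizes all interior vertices to in- and out-degree one (out-splitting the multi-source vertices in increasing order of the distance $d(\cdot)$, then in-splitting the multi-range vertices in increasing order of $\ell(\cdot)$) so that distinct trails have disjoint interiors; it then aligns the sources of the trails leaving each source cycle by repeated out-splittings, which \emph{lengthen} the trails; and only at the very end does it shorten every trail to length one by in-amalgamations at the sink cycles. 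You instead eliminate interior vertices immediately, by induction on their number, absorbing a maximal vertex of the interior DAG into the sink cycles via a fine out-splitting followed by in-amalgamations with the predecessor $w'$ of the target $w$; this makes the paper's in-splitting regularization pass unnecessary, since your absorption works for vertices of arbitrary in-degree (the duplicated in-edges are simply carried into the amalgamated cycle vertex). Your Phase 2 is then the paper's source-alignment step with the shortening interleaved after each one-step shift rather than postponed. Both routes are valid; yours has more transparent termination measures (number of interior vertices, then summed cyclic distance of the trail sources to the designated vertex), at the cost of verifying the amalgamation hypothesis at each absorption --- including the degenerate case where the sink cycle is a loop, so that $w'=w$ --- which you correctly flag and which does go through because sink cycles of a graph of Gelfand-Kirillov dimension three have no exits, so $w'$ always has a unique out-edge, to $w$. (Both your argument and the paper's also quietly use that interior vertices cannot emit edges back into source cycles; this follows from the structure recorded in Remark-Definition \ref{rem-gwGK3}.)
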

\begin{proof}
Let $E$ be a graph of Gelfand-Kirillov dimension three, and $v$ an interior vertex in $E$.	We define the following two distances:
\[d(v) = \max\{|p|\mid p \text{ is a trail in $E$ starting at $v$ and ending at some sink cycle}\}\] and 
\[\ell(v) = \max\{|p|\mid p \text{ is a trail in $E$ starting at some source cycle and ending at $v$}\}.\]	

We first transform $E$ into another graph of Gelfand-Kirillov dimension three where the interiors of distinct trails do not intersect. Indeed, assume there is some vertex u in the interior of a trail which is the source of more than one edge. Assuming the prior case, let us temporarily call such a vertex a {\it multi-source vertex}. Performing a sequence of out-splitting at these multi-source vertices $v$ via a partition $\{\mathscr{E}^v_1, \hdots, \mathscr{E}^v_k\}$ of $s^{-1}(v) = \{e_1, \hdots, e_k\}$ where $\mathscr{E}^v_i = \{e_i\}$, we obtain new vertices $v_1, v_2, \ldots, v_k$ which are the sources of only one edge each. The multi-source vertices in this new graph are the same ones as in the original graph, except for $v,$ plus all vertices in $s(r^{-1}(v))$ are now multi-source vertices. We note that $d(x) > d(v)$ for all $x\in s(r^{-1}(v))$. Thus, performing out-splittings at all multi-sources, starting arbitrarily at the ones with the smallest $d$-distance and moving to the ones with higher $d$-distance, gets rid of all of them. In this manner, we may transform our original graph into one in which all the vertices in the interior of the trails are only sources of one edge each. 

$$\xymatrixrowsep{0.9pc}\xymatrixcolsep{0.9pc} \xymatrix{\bullet_{u_1}\ar@/^.5pc/[dr]&&\bullet\ar@/^.5pc/[rr]&&\bullet\ar@/^1.3pc/[rr]&&\bullet\ar@/^0.6pc/[dl]&&&&&\bullet_{u_1}\ar@/_.6pc/[ddrr]\ar@/^.5pc/[drr]&&&\bullet\ar@/^0.3
	pc/[rr]&&\bullet\ar@/^1.3pc/[rr]&&\bullet\ar@/^0.6pc/[dl]\\
	&\bullet_{v}\ar@/^.4pc/[ur]\ar@/^1pc/[drr]&&&&\bullet\ar@/^0.6pc/[ul]&&\ar[rrr]^{\text{out-split at $v$}}&&&&&&\bullet_{v_1}\ar@/^.4pc/[ur]&&&&\bullet\ar@/^0.6pc/[ul]\\
	\bullet_{u_2}\ar@/_.4pc/[ur]&&&\bullet\ar@/_.8pc/[r]&\bullet\ar@/_.8pc/[l]&&&&&&&\bullet_{u_2}\ar@/_.5pc/[rr]\ar@/_.5pc/[urr]&&\bullet_{v_2}\ar@/^.5pc/[rr]&&\bullet\ar@/_.8pc/[r]&\bullet\ar@/_.8pc/[l]\\
	\bullet_{u_3}\ar@/_.6pc/[uur]&&&&&&&&&&&\bullet_{u_3}\ar@/_.5pc/[uurr]\ar@/_.5pc/[urr]}$$

\begin{figure}[h]
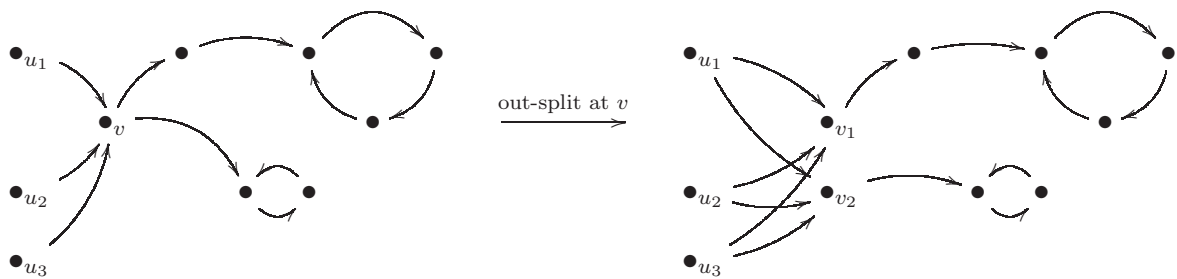
\caption{out-splitting at $v$ makes new multi-source vertices $u_1, u_2, u_3$  instead of $v$ which $d(u_1),d(u_2),d(u_3) > d(v)$.}
\end{figure}

By the same argument, assume there is some vertex $v$ in the interior of a trail which is the range of more than one edge. Assuming the prior case, let us temporarily call such a vertex a {\it multi-range vertex}. Performing a sequence of in-splittings at these multi-range vertices $v$ via a partition $\{\mathscr{E}^v_1, \hdots, \mathscr{E}^v_h\}$ of $r^{-1}(v) = \{e_1, \hdots, e_h\}$ where $\mathscr{E}^v_i = \{e_i\}$, we obtain new vertices $v_1, v_2 ,\ldots, v_h$ which are the range of only one edge each. The multi-range vertices in this new graph are the same ones as in the original graph, except for $u,$ plus all vertices in $r(s^{-1}(v))$ are now  multi-range vertices. We also note that $\ell(x) > \ell(v)$ for all $x\in r(s^{-1}(v))$. Thus, performing in-splittings at all multi-ranges, starting arbitrarily at the ones with the smallest $\ell$-distance and moving to the ones with higher $\ell$-distance, gets rid of all of them. In this manner, we may transform our original graph into one in which all vertices in the interior of the trails are only ranges and sources of one edge each.

$$\xymatrixrowsep{0.9pc}\xymatrixcolsep{0.9pc} \xymatrix{\bullet\ar@/^.5pc/[dr]&&\bullet^{w_1}\ar@/^.5pc/[rr]&&\bullet\ar@/^1.3pc/[rr]&&\bullet\ar@/^0.6pc/[dl]&&&&\bullet\ar[rr]&&\bullet_{v_1}\ar@/_.6pc/[ddrr]\ar@/^.5pc/[drr]&&&\bullet\ar@/^1.3pc/[rr]&&\bullet\ar@/^0.6pc/[dl]\\
	&\bullet_{v}\ar@/^.4pc/[ur]\ar@/^0.5pc/[drr]&&&&\bullet\ar@/^0.6pc/[ul]&&\ar[rrr]^{\text{in-split at $v$}}&&&&&&&\bullet_{w_1}\ar@/^.4pc/[ur]&&\bullet\ar@/^0.6pc/[ul]\\
	\bullet\ar@/_.4pc/[ur]&&&\bullet_{w_2}\ar@/^.5pc/[dr]&&&&&&&\bullet\ar[rr]&&\bullet_{v_2}\ar@/_.5pc/[rr]\ar@/_.3pc/[urr]&&\bullet_{w_2}\ar@/^.3pc/[dr]&&&\\
	\bullet\ar@/_.6pc/[uur]&&&&\bullet\ar@/_.8pc/[rr]&&\bullet\ar@/_.8pc/[ll]&&&&\bullet\ar[rr]&&\bullet_{v_3}\ar@/_.5pc/[uurr]\ar@/_.5pc/[urr]&&&\bullet\ar@/_.8pc/[rr]&&\bullet\ar@/_.8pc/[ll]}$$

\begin{figure}[h]\caption{out-splitting at $v$ makes  new multi-range vertices $w_1, w_2$  instead of $v$ which $\ell(w_1), \ell(w_2) > \ell(v)$.}
\end{figure} 

Now, let $C$ be an arbitrary source cycle in $E$ and $v$ a vertex on $C$ which is the source of a trail $p = e_1\cdots e_t$. Performing the out-splitting at $v$ with a partition $ \{\mathscr{E}^v_1, \mathscr{E}^v_2\} $ of $s^{-1}(v)$, where  $\mathscr{E}^v_1 = s^{-1}(v)\setminus \{e_1\}$ and  $\mathscr{E}^v_2 = \{e_1\} .$ Doing this split $v$ to $v_1,v_2$ , where $v_1$ will replace $v$ as a vertex in $C$ and lengthen $p$ and move  $p$ source back one step along $C$. By doing this repeatedly, we can arrange so that all the trails starting from $C$ are sourced at the same vertex.

$\xymatrixrowsep{0.8pc}\xymatrixcolsep{1pc}\xymatrix{&&&&\ar@{..}[r]&&&&&&&&&&&&\ar@{..}[r]&\\\ar@/_2.1pc/@{..}[dddd]&&\bullet\ar@/_.45pc/[ll]\ar@{-->}[urr]&&&&&&&&&&\ar@/_2.1pc/@{..}[dddd]&&\bullet\ar@/_.45pc/[ll]\ar@{-->}[urr]\\&&&&&\ar@{..}[r]&&&&&&&&&&&&\ar@{..}[r]&\\&&&\bullet_v\ar@/_.5pc/[uul]\ar[drr]\ar@{-->}[urr]&&&&\ar[rrr]^{\text{ out-split at $v$ }}&&&&&&&&\bullet_{v_1}\ar@/_.45pc/[uul]\ar@{-->}[urr]&&\bullet_{v_2}\ar[dr]\\&&&&&\ar@{..}[r]&&&&&&&&&&&&&\ar@{..}[r]&\\\ar@/_.45pc/[rr]&&\bullet\ar@/_.5pc/[uur]\ar@{-->}[drr]&&&&&&&&&&\ar@/_.45pc/[rr]&&\bullet\ar@/_.45pc/[uur]\ar[uurrr]\ar@{-->}[drr]&&&&\\&&&&\ar@{..}[r]&&&&&&&&&&&&\ar@{..}[r]&}$
\begin{figure}[h]
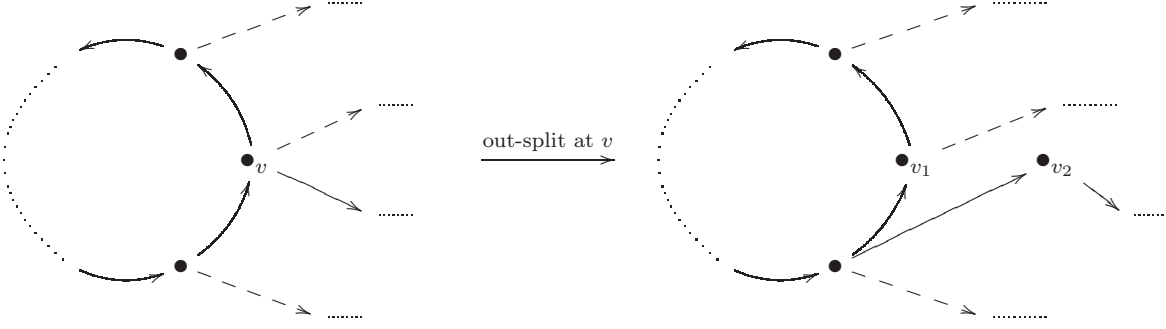
\caption{Out-splitting at a vertex on a source cycle. The dashed arrows represent trails that may or may not exist, but nevertheless are not modified by the graph move.}
\end{figure} 

Finally, let $D$ be an arbitrary sink cycle in $E$ and $v$ a vertex in $D$ which is the range of a trail $p = e_1\cdots e_t$ of length more than one. Performing the in-amalgamation at $v$ with a partition $ \{\mathscr{E}^v_1, \mathscr{E}^v_2\} $ of $s^{-1}(v)$, where  $\mathscr{E}^v_1 = r^{-1}(v)\setminus \{e_t\}$ and  $\mathscr{E}^v_2 = \{e_t\}$, we can shorten the trail $p$ at the cost of moving their ranges back along $D$. 
$$\xymatrixrowsep{0.5pc}\xymatrixcolsep{0.5pc}\xymatrix{...\ar@/_0.8pc/[drr]&&&&&&\bullet\ar@/_0.8pc/[ddll]&&&&&&&&...\ar@/_0.8pc/[drr]&&&&&&\bullet\ar@/_0.8pc/[ddll]&&\\&&\bullet\ar@/^0.8pc/[drr]&&&&&&&&&&&&&&\bullet\ar@/^1.3pc/[ddr]&&&&&&\\&&&&\bullet_{v}\ar@/_0.8pc/[ddrr]&&&&...\ar@/_0.8pc/@{-->}[uull]&\ar@{->}^{\text{in-splitting at $v$ }}[rrrr]&&&&&&&&&\bullet_{v_1}\ar@/_0.8pc/[ddrr]&&&&...\ar@/_0.8pc/@{-->}[uull]\\&&&&&&&&&&&&&&&&&\bullet^{v_2}\ar@/_1.5pc/[drrr]&&&&&\\&&&&&&\bullet\ar@/_0.8pc/@{-->}[uurr]&&&&&&&&&&&&&&\bullet\ar@/_0.8pc/@{-->}[uurr]&&}$$

By doing this repeatedly, we have a graph whose trails all have length $1$, thus finishing the proof.
\end{proof}

As a corollary of Proposition \ref{norform-prop}, we obtain the following useful result.

\begin{cor}\label{norform-cor}
 For every graph $E$ of Gelfand-Kirillov dimension three, there is a graph $F$ of Gelfand-Kirillov dimension three in normal form such that  $T_E\cong T_F$ as $\mathbb{Z}$-monoids.
\end{cor}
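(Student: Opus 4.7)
The proposal is to derive this as an immediate corollary of Proposition \ref{norform-prop} combined with the machinery relating Williams' moves, shift equivalence, and the talented monoid that has just been assembled in Section \ref{sec2}.

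First I would apply Proposition \ref{norform-prop} to the graph $E$ to produce a graph $F$ of Gelfand-Kirillov dimension three that is in normal form and is obtained from $E$ by a finite sequence of in-splittings, out-splittings, in-amalgamations, and out-amalgamations. Then, by the equivalence $(2)\Leftrightarrow(3)$ in Theorem \ref{willimove} (Williams), the adjacency matrices satisfy $A_E \sim_{SSE} A_F$. Since strong shift equivalence implies shift equivalence (as recorded in the discussion following the definition in Section \ref{dynref}), we have $A_E \sim_{SE} A_F$.

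Next I would invoke Corollary \ref{h99}, which translates the shift equivalence $A_E \sim_{SE} A_F$ into an order-preserving $\mathbb{Z}[x,x^{-1}]$-module isomorphism $K_0^{\gr}(L_K(E)) \cong K_0^{\gr}(L_K(F))$. By the identification $T_E \cong \mathcal{V}^{\gr}(L_K(E))$ noted before Conjecture \ref{conjehfyhtr} (via \cite[Proposition 5.7]{arahazrat}), the talented monoid $T_E$ coincides with the positive cone of the graded Grothendieck group, and the $\mathbb{Z}[x,x^{-1}]$-action on $K_0^{\gr}(L_K(E))$ restricts to the $\mathbb{Z}$-action on $T_E$ by shift of degree. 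Restricting the order-preserving $\mathbb{Z}[x,x^{-1}]$-module isomorphism to positive cones therefore yields a $\mathbb{Z}$-monoid isomorphism $T_E \cong T_F$.

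There is essentially no obstacle here beyond bookkeeping: the heavy lifting is done in Proposition \ref{norform-prop}, and the passage from Williams' moves to a talented-monoid isomorphism is exactly the bridge (graph moves $\Rightarrow$ SSE $\Rightarrow$ SE $\Rightarrow$ $K_0^{\gr}$-isomorphism $\Rightarrow$ $T$-isomorphism) that the preliminaries were set up to provide. The only point worth being careful about is to observe that the $\mathbb{Z}$-action matches under the identifications, so that the resulting monoid isomorphism is genuinely $\mathbb{Z}$-equivariant; this is immediate from Theorem \ref{mmpags} and the definitions.
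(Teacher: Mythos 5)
Your proposal is correct, but it takes a genuinely different route from the paper. The paper's proof is shorter: after invoking Proposition \ref{norform-prop}, it cites \cite[Theorems 4.4 and 4.7]{Luiz} directly, which state that for finite graphs without sinks, in-splittings, out-splittings, and their amalgamations preserve the talented monoid up to $\mathbb{Z}$-isomorphism; no passage through shift equivalence or $K$-theory is needed. Your route instead runs the full bridge moves $\Rightarrow$ SSE (Theorem \ref{willimove}) $\Rightarrow$ SE $\Rightarrow$ order-preserving $\mathbb{Z}[x,x^{-1}]$-isomorphism of $K_0^{\gr}$ (Corollary \ref{h99}) $\Rightarrow$ $\mathbb{Z}$-monoid isomorphism $T_E\cong T_F$ by restriction to positive cones. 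This works: $F$ is essential (being of Gelfand-Kirillov dimension three), so Theorem \ref{willimove} and Corollary \ref{h99} apply, and the final restriction step is exactly the argument the paper itself uses for $(2)\Leftrightarrow(3)$ in Theorem \ref{mainthm}. The one hypothesis you leave implicit is that recovering $T_E$ from the positive cone of the group completion requires the natural map $\mathcal V^{\gr}(L_K(E))\to K_0^{\gr}(L_K(E))$ to be injective, i.e.\ that $T_E$ is cancellative; the paper records this (via the acyclicity of the covering graph $\overline{E}$ and \cite[Lemma 5.5]{arahazrat}), so the gap is only cosmetic. What the paper's approach buys is economy and independence from the $K$-theoretic machinery; what yours buys is that it uses only results already quoted in the preliminaries, without appealing to the external invariance theorems of \cite{Luiz}.
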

\begin{proof}
By Proposition \ref{norform-prop}, $E$ can be transformed into a graph $F$ of Gelfand-Kirillov dimension three in normal form by a sequence of in-splittings, out-splittings,  in-amalgamations, and out-amalgamations. Since graphs of Gelfand-Kirillov dimension three are finite and have no sinks, \cite[Theorems 4.4 and 4.7]{Luiz} shows that in-splittings, out-splittings,  in-amalgamations, and out-amalgamations preserve the talented monoids. From these observations, we immediately get that  $T_E\cong T_F$ as $\mathbb{Z}$-monoids, thus finishing our proof.
\end{proof}	

The following proposition plays an important role in proving the main result of this section.

\begin{prop}\label{prop-functor}
Let $E$ be a graph of Gelfand-Kirillov dimension three with all source cycles $C_1^E, \ldots, C_m^E$ and all sink cycles $D_1^E,\ldots, D_n^E$. For any pair $(C^E_i, D^E_j)$, let $\textnormal{Trail}_E(C^E_i, D^E_j)$ be the set of all trails of $E$ connecting $C^E_i$ to $D^E_j$. For any $\alpha \in \textnormal{Trail}_E(C^E_i, D^E_j)$ with the source has index $a_i$ on $C^E_i$ and the range has index $b_j$ on $D^E_j$, let $f_E(\alpha) = b_j - (a_i+ |\alpha|) \pmod{(p^E_i, q^E_j)}$, where $p^E_i = |C^E_i|$ and $q^E_j = |D^E_j|$. Let $G$ be the graph obtained from $E$ using the out-splitting, or in-splitting, or their inverses. Then, the following statements hold:

$(1)$ $G$ is a graph of Gelfand-Kirillov dimension three such that there exists an order-preserving bijection $\phi: \mathcal{C}_E\longrightarrow\mathcal{C}_G$ such that $|c| = |\phi(c)|$ for all $c\in \mathcal{C}_E$;

$(2)$ For any pair $(C^E_i, D^E_j)$, there exists a bijection $$\Theta_{E, G}: \textnormal{Trail}_E(C^E_i, D^E_j)\longrightarrow \textnormal{Trail}_G(\phi(C^E_i), \phi(D^E_j))$$
such that $f_E(\alpha) = f_G(\Theta_{E, G}(\alpha))$ for all $\alpha\in \textnormal{Trail}_E(C^E_i, D^E_j)$.
\end{prop}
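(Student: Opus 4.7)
Part (1) follows directly from Theorem \ref{corcycle1}. Indeed, each of the four moves in question is a single step of strong shift equivalence, and hence of shift equivalence, so Theorem \ref{corcycle1} produces the order-preserving, length-preserving bijection $\phi:\mathcal{C}_E\to\mathcal{C}_G$. The splitting moves preserve connectedness, and since we require every partition block to be non-empty they also preserve the essential property. By \cite[Theorem 5]{aajz:lpaofgkd}, the Gelfand--Kirillov dimension of the Leavitt path algebra of a graph with disjoint cycles is $\max\{2d_1-1,2d_2\}$, where $d_1$ is the longest chain of cycles and $d_2$ the longest chain of cycles ending in a cycle with an exit. Both quantities are transported by $\phi$, because $\phi$ respects the cycle poset $(\mathcal{C}_E,\le)$ and because a cycle has an exit in $E$ if and only if the corresponding cycle in $G$ does (exits of cycles are precisely sources of trail edges, a property that is intrinsic to the cycle within the graph and is manifestly preserved by the moves). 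Hence $G$ is again of Gelfand--Kirillov dimension three.

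For part (2), by taking inverses of bijections when necessary, it suffices to treat a single out-splitting or in-splitting performed at a vertex $v\in E^0$. The guiding accounting principle is that every trail $\alpha\in\textnormal{Trail}_E(C_i^E,D_j^E)$ will fall into one of two types under $\Theta_{E,G}$: either (A) $\Theta_{E,G}(\alpha)$ has the same source, the same range and the same length as $\alpha$, so that $f_E(\alpha)=f_G(\Theta_{E,G}(\alpha))$ holds trivially; or (B) exactly one extra edge is added at one end of $\alpha$, while the corresponding endpoint is shifted by one step backward along $C_i^E$ (if the edge is prepended) or forward along $D_j^E$ (if the edge is appended). In case (B) the unit increase of $|\alpha|$ cancels against the unit decrease of $a_i$ or the unit increase of $b_j$ in the expression $b_j-(a_i+|\alpha|)\pmod{(p_i^E,q_j^E)}$, so $f$ is preserved.

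The case analysis runs on the position of $v$. When $v$ lies on a source cycle $C_i^E$, the only non-trivial move is an out-splitting, since $|r^{-1}(v)|=1$ forces in-splittings to be trivial; taking the partition block $\mathscr{E}_1^v\subseteq s^{-1}(v)$ to contain the unique cycle edge, the vertex $v_1$ inherits the cycle position and the index $a_i$, while each remaining $v_k$ becomes an interior vertex reached from the predecessor $u$ of $v$ on $C_i^E$ (at index $a_i-1$) by a duplicated copy $g_k$ of the predecessor's cycle edge. Trails with first edge in $\mathscr{E}_1^v$ are of type (A) and trails $\alpha=f\beta$ with $f\in\mathscr{E}_k^v$, $k\ge 2$, map to $g_k f\beta$ of type (B), with the identity $b_j-((a_i-1)+(|\alpha|+1))=b_j-(a_i+|\alpha|)$ confirming $f$-preservation. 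The case $v\in(D_j^E)^0$ is dual: only in-splittings act non-trivially, and blocks of $r^{-1}(v)$ not containing the unique outgoing cycle edge give rise to appended copies $e_k$ of that edge, ranged at the successor of $v$. When $v$ is an interior vertex, every splitting simply relabels internal segments of affected trails, leaving source, range and length intact (type (A)). Trails disjoint from $v$ are unchanged in all cases, and all remaining configurations (in-splitting at a source-cycle vertex, out-splitting at a sink-cycle vertex, or any move at a vertex on no trail) are vacuous. The main technical obstacle is purely bookkeeping: one must fix the convention that the new on-cycle vertex inherits the index of $v$, so that the indexing of the rest of the cycle and of all trails not touching $v$ is unaffected; with this convention in place the $\pm 1$ shifts in length cancel uniformly with the $\pm 1$ shifts in cycle index, and $f_E(\alpha)=f_G(\Theta_{E,G}(\alpha))$ is forced on every trail.
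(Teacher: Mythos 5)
Your proposal is correct and follows essentially the same route as the paper: part (1) via Theorem \ref{corcycle1} applied to the (strong) shift equivalence coming from the single graph move, and part (2) via the same case analysis on the location of the split vertex relative to the trail, with the identical $\pm1$ cancellation $b_j-((a_i-1)+(|\alpha|+1))=b_j-(a_i+|\alpha|)$ (resp.\ $(b_j+1)-(a_i+(|\alpha|+1))$) showing $f$ is preserved. If anything, your treatment of part (1) is slightly more careful than the paper's, which cites Theorem \ref{corcycle1} as if it directly yields Gelfand--Kirillov dimension three, whereas you supply the missing observation that the order- and length-preserving bijection of the cycle posets transports $d_1$ and $d_2$.
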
	
\begin{proof}
(1) By Theorem \ref{willimove}, $A_E\sim_{SSE} A_G$, where $A_E$ and $A_G$ are, respectively,  the adjacency matrices of $E$ and $G$, and so $A_E\sim_{SE} A_G$. By Theorem \ref{corcycle1}, $G$ is a graph of Gelfand-Kirillov dimension three such that there exists an order-preserving bijection $\phi: \mathcal{C}_E\longrightarrow\mathcal{C}_G$ such that $|c| = |\phi(c)|$ for all $c\in \mathcal{C}_E$, where $\mathcal{C}_E$ is the partially ordered set of all disjoint cycles in $E$.

(2) Let $\alpha$ be an arbitrary element of $\textnormal{Trail}_E(C^E_i, D^E_j)$ with the source has index $a_i$ on $C^E_i$ and the range has index $b_j$ on $D^E_j$, $v$ an arbitrary vertex in $E$, and $G$ a graph obtained from $E$ by using one of Moves (I) and (O) and their inverses at $v$. If $v\notin \alpha^0$, then $\alpha$ does not change, and we define $\Theta_{E, G}(\alpha) = \alpha\in \textnormal{Trail}_G(\phi(C^E_i), \phi(D^E_j))$. In this case, it is obvious that  $f_E(\alpha) = f_G(\Theta_{E, G}(\alpha))$. Consider the following cases.

{\it Case} $1$: $v\in \alpha^0\setminus\{s(\alpha), r(\alpha)\}$ and $G$ is obtained from $E$ using the out-splitting  at $v$ with a partition $ \{\mathscr{E}^v_1,\ldots, \mathscr{E}^v_t\} $ of $s^{-1}(v)$. We then have $G^0 = E^0\setminus\{v\} \cup \{v^1, v^2, \ldots, v^t\}$ and $G^1 = E^1\setminus r^{-1}(v) \cup \{e^1, \ldots, e^t\mid e\in r^{-1}(v)\}$. Write $\alpha = \alpha_1eg\alpha_2$, where $\alpha_1, \alpha_2\in E^*$, $e, g\in E^1$, $r(\alpha_1)= s(e)$, $r(e) = s(g) =v$ and $r(g) = s(\alpha_2)$. Let $k$ ($1\le k\le t$) be a unique integer such that $g\in \mathscr{E}^v_k$. We define $$\Theta_{E, G}(\alpha) = \alpha_1e^kg\alpha_2\in \textnormal{Trail}_G(\phi(C^E_i), \phi(D^E_j)).$$ In this case, we note that $\phi(C^E_i) = C^E_i$, $\phi(D^E_j) = D^E_j$ and $|\Theta_{E, G}(\alpha)| = |\alpha_1e^kg\alpha_2|=|\alpha|$, and so
it is obvious that  $f_E(\alpha) = f_G(\Theta_{E, G}(\alpha))$. 

{\it Case} $2$: $v= s(\alpha)$ and $G$ is obtained from $E$ using the out-splitting  at $v$ with a partition $ \{\mathscr{E}^v_1,\ldots, \mathscr{E}^v_t\} $ of $s^{-1}(v)$. Then, there is a unique edge $e\in E^1$ such that $r(e) = v$ (we mention that $e$ lies on cycle $C^E_i$), and so 
$G^0 = E^0\setminus\{v\} \cup \{v^1, v^2, \ldots, v^t\}$ and $G^1 = E^1\setminus \{e\} \cup \{e^1, \ldots, e^t\}$. Write $\alpha = g\beta$, where $g\in E^1$, $\beta\in E^*$, $s(g) = v$ and $r(g) = s(\beta).$ Let $l$ ($1\le l\le t$) be a unique integer such that $g\in \mathscr{E}^v_l$. We define $$\Theta_{E, G}(\alpha) = e^lg\beta\in \textnormal{Trail}_G(\phi(C^E_i), \phi(D^E_j)).$$
Let $f$ be the edge on $C^E_i$ with $s(f) = v$, and let $h$ ($1\le h\le t$) be a unique integer such that $f \in \mathscr{E}^v_h$. We have that $\phi(D^E_j) = D^E_j$, and $\phi(C^E_i)$ is the cycle in $G$ obtained  from $C^E_i$ by replacing vertex $v$ and edge $e$ by $v^h$ and $e^h$, respectively. We have that $|\Theta_{E, G}(\alpha)| = |e^lg\beta| = |\alpha| +1$, $r_G(\Theta_{E, G}(\alpha))=r_G(e^lg\beta) = r_E(\alpha)$ and $s_G(\Theta_{E, G}(\alpha)) = s_G(e^lg\beta)$ is  the vertex with index $a_i-1 \pmod{p^E_i}$, and so 
\begin{align*}
f_G(\Theta_{E, G}(\alpha))&=b_j - (a_i -1 + |\alpha| +1) \pmod{(p^E_i, q^E_j)}\\
&= b_j - (a_i + |\alpha|) \pmod{(p^E_i, q^E_j)}\\
& = f_E(\alpha).
\end{align*}
$$\xymatrixrowsep{0.5pc}\xymatrixcolsep{0.5pc}\xymatrix{&&\bullet\ar@/_0.8pc/@{-->}[ddll]&&&&&&...&&&&&&&&\bullet\ar@/_0.8pc/@{-->}[ddll]&&&&&&...&&&&&&\\&&&&&&\bullet\ar@/_0.7pc/[urr]&&&&&&&&&&&&&&\bullet\ar@/_0.7pc/[urr]&&&&&&&&\\...\ar@/_0.8pc/@{-->}[ddrr]&&&& \bullet^v\ar@/_0.8pc/[uull]^f\ar@/^0.8pc/[urr]^g&&&&&\ar@{<->}[rrrr]&&&&&...\ar@/_0.8pc/@{-->}[ddrr]&&&& \bullet^{v^h}\ar@/_0.8pc/[uull]^f&&&&&\\&&&&&&&&&&&&&&&&&&&\bullet^{v^l}\ar@/^1.4pc/[uur]^g&&&&&&&&&\\&&\bullet^u\ar@/_0.8pc/[uurr]^e&&&&&&&&&&&&&&\bullet^u\ar@/_0.8pc/[uurr]^{e^h}\ar@/_1.5pc/[urrr]^{e^l}&&&&&&&&&&&&}$$
\begin{figure}[h]
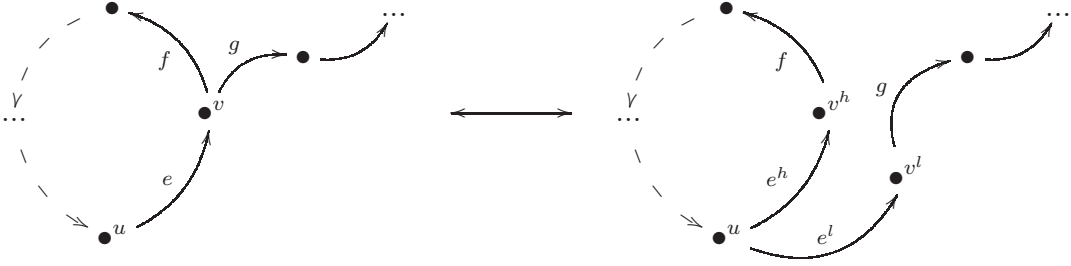
\caption{Out-splitting at $v$ or out-amalgamation.}
\end{figure} 

Inversely, performing out-amalgamations on the trail's source which will move the trail's source to vertex with index  $a_i +1 \pmod{p^E_i}$ and shorten the trail by $1$.	In this case, we have
\begin{align*}
f_G(\Theta_{E, G}(\alpha))&=b_j - (a_i + 1 + |\alpha| -1) \pmod{(p^E_i, q^E_j)}\\
&= b_j - (a_i + |\alpha|) \pmod{(p^E_i, q^E_j)}\\
& = f_E(\alpha).
\end{align*} 

{\it Case} $3$: $v\in \alpha^0\setminus\{s(\alpha), r(\alpha)\}$ and $G$ is obtained from $E$ using the in-splitting  at $v$ with a partition $ \{\mathscr{E}^v_1,\ldots, \mathscr{E}^v_t\} $ of $r^{-1}(v)$.  We then have $G^0 = E^0\setminus\{v\} \cup \{v^1, v^2, \ldots, v^t\}$ and $G^1 = E^1\setminus s^{-1}(v) \cup \{g^1, \ldots, g^t\mid g\in s^{-1}(v)\}$. Write $\alpha = \alpha_1eg\alpha_2$, where $\alpha_1, \alpha_2\in E^*$, $e, g\in E^1$, $r(\alpha_1)= s(e)$, $r(e) = s(g) =v$ and $r(g) = s(\alpha_2)$. Let $k$ ($1\le k\le t$) be a unique integer such that $e\in \mathscr{E}^v_k$. We define $$\Theta_{E, G}(\alpha) = \alpha_1eg^k\alpha_2\in \textnormal{Trail}_G(\phi(C^E_i), \phi(D^E_j)).$$ In this case, we note that $\phi(C^E_i) = C^E_i$, $\phi(D^E_j) = D^E_j$ and $|\Theta_{E, G}(\alpha)| = |\alpha_1eg^k\alpha_2|=|\alpha|$, and so
it is obvious that  $f_E(\alpha) = f_G(\Theta_{E, G}(\alpha))$. 

{\it Case} $4$: $v= r(\alpha)$ and $G$ is obtained from $E$ using the in-splitting  at $v$ with a partition $ \{\mathscr{E}^v_1,\ldots, \mathscr{E}^v_t\} $ of $r^{-1}(v)$. Then, there is a unique edge $g\in E^1$ such that $s(g) = v$ (we mention that $g$ lies on cycle $D^E_j$), and so 
$G^0 = E^0\setminus\{v\} \cup \{v^1, v^2, \ldots, v^t\}$ and $G^1 = E^1\setminus \{g\} \cup \{g^1, \ldots, g^t\}$. Write $\alpha = \beta e$, where $e\in E^1$, $\beta\in E^*$, $r(e) = v$ and $s(e) = r(\beta).$ Let $l$ ($1\le l\le t$) be a unique integer such that $e\in \mathscr{E}^v_l$. We define $$\Theta_{E, G}(\alpha) = \beta e g^l\in \textnormal{Trail}_G(\phi(C^E_i), \phi(D^E_j)).$$
Let $f$ be the edge on $D^E_j$ with $r(f) = v$, and let $h$ ($1\le h\le t$) be a unique integer such that $f \in \mathscr{E}^v_h$. We have that $\phi(C^E_i) = C^E_i$, and $\phi(D^E_j)$ is the cycle in $G$ obtained  from $D^E_j$ by replacing vertex $v$ and edge $g$ by $v^h$ and $g^h$, respectively. We have that $|\Theta_{E, G}(\alpha)| = |\beta eg^l| = |\alpha| +1$, $s_G(\Theta_{E, G}(\alpha))=s_G(\beta eg^l) = s_E(\alpha)$ and $r_G(\Theta_{E, G}(\alpha)) = r_G(\beta e g^l)$ is  the vertex with index $b_j +1 \pmod{q^E_j}$, and so 
\begin{align*}
f_G(\Theta_{E, G}(\alpha))&=b_j + 1- (a_i + |\alpha| +1) \pmod{(p^E_i, q^E_j)}\\
&= b_j - (a_i + |\alpha|) \pmod{(p^E_i, q^E_j)}\\
& = f_E(\alpha).
\end{align*}
$$\xymatrixrowsep{0.5pc}\xymatrixcolsep{0.5pc}\xymatrix{...\ar@/_0.8pc/[drr]&&&&&&\bullet\ar@/_0.8pc/[ddll]_f&&&&&&&&...\ar@/_0.8pc/[drr]&&&&&&\bullet\ar@/_0.8pc/[ddll]_f&&\\&&\bullet\ar@/^0.8pc/[drr]^e&&&&&&&&&&&&&&\bullet\ar@/^1.3pc/[ddr]^e&&&&&&\\&&&&\bullet_{v}\ar@/_0.8pc/[ddrr]_g&&&&...\ar@/_0.8pc/@{-->}[uull]&\ar@{<->}[rrrr]&&&&&&&&&\bullet_{v^h}\ar@/_0.8pc/[ddrr]^{g^h}&&&&...\ar@/_0.8pc/@{-->}[uull]\\&&&&&&&&&&&&&&&&&\bullet^{v^l}\ar@/_1.5pc/[drrr]_{g^l}&&&&&\\&&&&&&\bullet^u\ar@/_0.8pc/@{-->}[uurr]&&&&&&&&&&&&&&\bullet^u\ar@/_0.8pc/@{-->}[uurr]&&}$$
\begin{figure}[h]
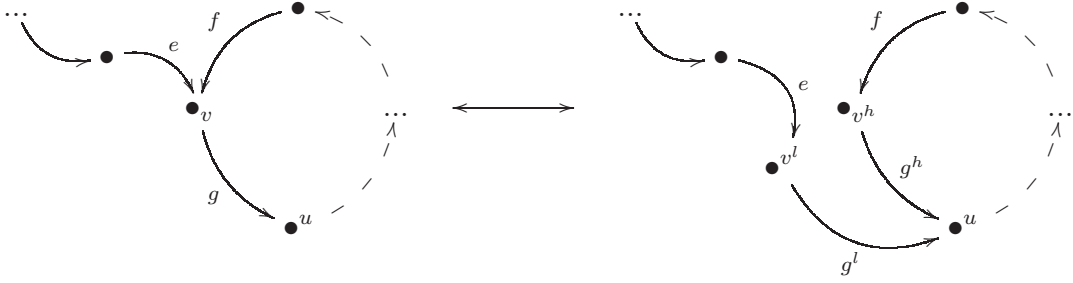
\caption{In-splitting at $v$ or in-amalgamation.}
\end{figure} 

Inversely, performing in-amalgamations on the trail's range which will move the trail's range to vertex with index  $b_i -1 \pmod{q^E_j}$ and shorten the trail by $1$.	In this case, we have
\begin{align*}
f_G(\Theta_{E, G}(\alpha))&=b_j -1 - (a_i + |\alpha| -1) \pmod{(p^E_i, q^E_j)}\\
&= b_j - (a_i + |\alpha|) \pmod{(p^E_i, q^E_j)}\\
& = f_E(\alpha),
\end{align*} 
thus finishing the proof.
\end{proof}	

It is worth mentioning the following fact which is useful to prove the main result of this section.

\begin{prop}\label{prop-lengthen}
Let $E$ be a graph of Gelfand-Kirillov dimension three, $C$ a source cycle of $E$ with $p =|C|$, $D$ a sink cycle of $E$ with $q= |D|$, and let $d:=\operatorname{gcd}(p, q)$. Then, in-splittings, out-splittings, in-amalgamations and out-amalgamations may be used to lengthen or shorten any trail of $E$ from $C$ to $D$ by $d$ edges without changing the source and range of the trail.	Consequently, for every trail $\alpha$ of $E$ from $C$ to $D$ such that $r(\alpha)$ has index $b$ $(1\le b\le q)$, we may shift $r(\alpha)$ to the vertex on $D$ with index $(b - d) \pmod{q}$ without changing the length and source of $\alpha$. 
\end{prop}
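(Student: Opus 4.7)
The plan is to build a small set of ``macro moves'' from the single-move analysis of Proposition~\ref{prop-functor}, and then combine them via Bezout's identity.

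First I would construct two length-only macros that fix both endpoints of the trail. By iterating Case~2 of Proposition~\ref{prop-functor} (out-splitting at the source of the trail) $p$ times in succession, at each step the source shifts one step backward along $C$ and the length grows by one; after $p$ iterations the source has traversed $C$ and returned to the original vertex, while the trail has been lengthened by $p$ and the range is untouched. Call this composite $S_+$, and denote by $S_-$ its inverse, built from $p$ out-amalgamations. Dually, iterating Case~4 of Proposition~\ref{prop-functor} $q$ times at the range gives macros $T_\pm$ which alter the length by $\pm q$ while fixing both source and range.

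Using Bezout's identity, choose integers $a,b \in \mathbb Z$ with $ap + bq = d$. Applying $S_+$ a total of $a$ times (or $S_-$ a total of $-a$ times if $a<0$) followed by $T_+$ a total of $b$ times (or $T_-$ a total of $-b$ times if $b<0$) produces a net length change of $ap+bq=d$, while the source and range remain fixed throughout the sequence. Reversing all signs delivers the shortening by $d$ variant, which establishes the first assertion.

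For the consequence, I would first perform $d$ successive in-amalgamations at the range of $\alpha$ (after, if necessary, applying $S_+$ once to guarantee that the trail has length at least $d$, which is possible since $d \le p$): by the inverse of Case~4, this shifts the range from index $b$ to index $(b - d) \pmod q$ and shortens the trail by $d$, leaving the source unchanged. Applying the length-only macro from the first part to then lengthen the trail by $d$ restores the original length while keeping both the source and the new range fixed. The net effect is the desired shift of $r(\alpha)$ by $-d$ modulo $q$, with length and source preserved.

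The main obstacle is the bookkeeping needed to verify that Proposition~\ref{prop-functor} applies at every intermediate stage, so that the macros compose as claimed. This is taken care of by part~(1) of that proposition, which ensures that each graph in the sequence is again of Gelfand--Kirillov dimension three with an order-preserving correspondence on cycles that preserves cycle lengths, and by the bijection $\Theta_{E,G}$ of part~(2), which allows the trail together with its source-index, range-index and length to be tracked consistently through the entire sequence of moves.
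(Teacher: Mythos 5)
Your proposal is correct and follows essentially the same route as the paper: both arguments use Bézout's identity to combine the ``lengthen by $p$ while returning the source to its original position'' move (iterated out-splittings at the source, Case~2 of Proposition~\ref{prop-functor}) with the analogous length-$q$ move at the range (Case~4), yielding a net length change of $d$ with both endpoints fixed, and then obtain the range-shift by pairing $d$ in-amalgamations at $D$ with a compensating lengthening by $d$. The only differences are cosmetic (the paper writes the Bézout combination as $\widetilde{p}p-\widetilde{q}q=d$ and performs the lengthening before the $d$ in-amalgamations, whereas you allow general signs and reverse the order), and your extra care about keeping the trail long enough to amalgamate is a sensible refinement of the same argument.
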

\begin{proof}	
Let $\alpha$ be a trail of $E$ from $C$ to $D$. By Bézout's Theorem, there exist positive integers $\widetilde{p}$ and $\widetilde{q}$ such that $\widetilde{p} p-\widetilde{q} q = d$. The move of lengthening or shortening a trail by $d$ edges can thus be attained using in- and out-splittings and amalgamations in the following manner:

(i) Lengthen the trail by $\widetilde{p}p$ edges, by out-splitting at $C$;

(ii) Shorten the trail by $\widetilde{q}q$ edges, by out-amalgamation at $D$.\\ 
Performing the above procedure will not change the source and range of the trail $\alpha$, but repeatedly lengthen or shorten the trail by $d$ edges without changing the source and range of the trail. 

Assume that $r(\alpha)$ is a vertex on $D$ with index $b$ ($1\le b\le q$). We first lengthen the trail $\alpha$ by $d$ edges without changing the source and range of the trail via the above procedure. Then, using in-amalgamation at the cycle $D$ $d$ times repeatedly, we shorten the obtained trail by $d$ edges and shift its range to the vertex on $D$ with index $(b - d) \pmod{q}$, thus finishing the proof.
\end{proof}

Let $E$ be a graph of Gelfand-Kirillov dimension three in normal form with all source cycles  $C_1^E,\ldots , C_m^E$ and all sink cycles $D_1^E,\ldots, D_n^E$. Let $p_i^E := |C^E_i|$ for all $1\le i\le m$ and $q_j^E := |D^E_j|$ for all $1\le j\le n$. We denote
$$ \mathrm{d}_{(i,j)}^E=\operatorname{gcd}\left(p_i^{E}, q_j^{E}\right) \text{ for all } i = 1,2,\ldots, m \text{ and }
j = 1,2,\ldots, n.$$
Let us call the triple $(E, \{C^E_i\}^m_{i=1}, \{D^E_j\}^n_{j=1})$ a {\it pointed graph of Genlfand-Kirillov dimension three}.
For each natural number $c$, we denote by $N_{(i,j)}^E(c)$ be the number of trails of $E$ starting at  $v_i^E$  and ending at a vertex with index $l$ in $D_j^E$ such that $l \equiv c \mbox{ (mod $d_{(i,j)}^E$)}$.

We define a relation on the class of graph of Gelfand-Kirillov dimension three in normal form. For pointed graphs of Gelfand-Kirillov dimension three $(E, \{C^E_i\}^m_{i=1}, \{D^E_j\}^n_{j=1})$ and $(F, \{C^F_i\}^m_{i=1}, \{D^F_j\}^n_{j=1})$, we write $E \approx F$ if there exist $m + n$ integers $a_1, \ldots, a_m,$ $b_1, \ldots, b_n$
such that: \[
\left\{
\begin{array}{ll}
p_i^E = p_i^F,\ q_j^E = q_j^F &\text{ for all } 1 \le i \le m \text{ and } 1 \le j \le n \\ 
N_{(i,j)}^E(c) = N_{(i,j)}^F(c + a_i + b_j)& \text{ for all } c \in \mathbb{N} , 1 \le i \le m ,1 \le j \le  n.
\end{array}  \right
.\]

The following theorem provides us with a number-theoretic criterion for graphs of Gelfand-Kirillov dimension three in normal form to be strong shift equivalent.

\begin{thm}\label{numtheo-crite-sse1}
Let $E$ and $F$ be graphs of Gelfand-Kirillov dimension three in normal form, and let $A_E$ and $A_F$ be the adjacency matrices of $E$ and $F$, respectively.  Then,  $A_E\sim_{SSE} A_F$ if and only if $E \approx F$. 
\end{thm}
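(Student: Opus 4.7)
The plan is to prove both directions by exploiting the functoriality of trails under the Williams moves. The backward implication will be handled by induction on the sequence of moves, reducing to tracking a single invariant; the forward direction requires an explicit construction of a sequence of Williams moves from the combinatorial data.

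For the direction $(\Rightarrow)$, suppose $A_E \sim_{SSE} A_F$. By Theorem \ref{willimove}, one can pass from $E$ to $F$ by a finite sequence of in/out-splittings and their inverses. It suffices to show that the $\approx$ relation is preserved under a single such move followed by renormalization via Proposition \ref{norform-prop}. Proposition \ref{prop-functor} supplies a length-preserving bijection $\phi$ of cycles together with a bijection $\Theta_{E,G}$ of trails preserving the invariant $f_E(\alpha) \equiv b - (a + |\alpha|) \pmod{d^E_{(i,j)}}$. In normal form, where every trail $\alpha$ has length $1$ and starts at the distinguished vertex $v^E_i$ of some index $a^E_i$ on $C^E_i$, the numbers $N^E_{(i,j)}(c)$ are completely determined by the multiset of $f$-values once one fixes indexing conventions on each cycle. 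Because the normalization step itself consists of Williams moves, hence preserves $f$, the multisets of $f$-values for $E$ and $F$ agree pair-by-pair under the induced cycle bijection, and the residual discrepancy between index conventions on $C^E_i$ vs $C^F_i$ (resp.\ $D^E_j$ vs $D^F_j$) is absorbed by integers $a_i$ (resp.\ $b_j$), yielding $E \approx F$.

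For the direction $(\Leftarrow)$, assume $E \approx F$ with witnessing shifts $a_1,\dots,a_m, b_1,\dots,b_n$. The strategy is to transform $E$ into a graph isomorphic to $F$. After re-indexing the cycles of $F$ via the shifts (so that $v^F_i$ plays the role of $v^E_i$), the hypothesis $N^E_{(i,j)}(c) = N^F_{(i,j)}(c + a_i + b_j)$ produces a bijection between the trails of $E$ and the trails of $F$ that respects source cycles, sink cycles, and range residues modulo $d^E_{(i,j)} = d^F_{(i,j)}$. For each matched pair $(\alpha_E, \alpha_F)$, the difference between the range index of $\alpha_E$ and that of $\alpha_F$ is a multiple of $d^E_{(i,j)}$, an adjustment exactly of the type afforded by Proposition \ref{prop-lengthen}: one lengthens by $\widetilde{p}p$ via out-splittings at $C^E_i$ and shortens by $\widetilde{q}q$ via in-amalgamations at $D^E_j$ to shift the range by $d^E_{(i,j)}$ without altering the source. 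Iterating over all trails produces $F$ up to isomorphism, which in turn implies $A_E \sim_{SSE} A_F$.

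The principal obstacle is the bookkeeping in $(\Leftarrow)$: the moves underlying Proposition \ref{prop-lengthen} act on whole cycles and can simultaneously perturb every trail incident to $C^E_i$ or $D^E_j$. To keep previously matched trails in place while adjusting a new one, I expect to either (a) precede each global move with an out-splitting that isolates the particular edge to be modified onto its own copy of the cycle vertex, so that subsequent amalgamations localize the effect to a single trail; or (b) set up an induction on a nonnegative integer discrepancy such as $\sum_{i,j} \sum_\ell |x_\ell^E - x_{\sigma(\ell)}^F|$ and show that at each stage some legal adjustment strictly reduces the discrepancy while not re-introducing previously eliminated errors. Verifying that such a sequencing is always possible---and that it respects the normal-form constraint throughout---is the technical heart of the argument.
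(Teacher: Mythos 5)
Your overall strategy coincides with the paper's: the forward direction rests on the move-invariant $f$ and the trail bijections of Proposition \ref{prop-functor}, and the backward direction on normalizing range indices with Proposition \ref{prop-lengthen} until the two graphs become isomorphic. The forward half is essentially complete as you describe it: chaining the bijections $\Theta$ along the sequence of moves taking $E$ to a normal-form graph isomorphic to $F$, and unwinding $f_E(\alpha)\equiv b-(a(i)+|\alpha|)$ with $|\alpha|=1$, yields $N^E_{(i,j)}(c)=N^F_{(i,j)}(c+a_i+b_j)$, with the shift splitting into a term depending only on $i$ and one depending only on $j$ exactly because the only residual freedom is the choice of distinguished source vertex on each $C_i$ and of labelling on each $D_j$. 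This is the paper's argument.

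The backward direction is where you have a genuine gap, and you have correctly located it. Your option (a) is in fact how the paper closes it: in Proposition \ref{prop-lengthen} (and already in the proof of Proposition \ref{norform-prop}) every out-splitting at a vertex $v$ of a source cycle is taken with the partition $\{s^{-1}(v)\setminus\{e\},\{e\}\}$, where $e$ is the first edge of the single trail being adjusted, and dually the in-amalgamations at the sink cycle only ever absorb the last edge of that same trail; with these partitions the targeted trail is the only one whose source, range or length changes, so the adjustments for distinct trails commute and the normal form can be restored after each round. A second point needs repair: ``re-indexing the cycles of $F$ via the shifts'' does not by itself absorb the $a_i$, because $N_{(i,j)}$ records only residues of \emph{range} indices and is completely blind to a relabelling of $C^F_i$. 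To absorb $a_i$ one must genuinely move the common source vertex $a_i$ steps along $C_i$ by Williams moves (out-split at the source, then in-amalgamate at each sink cycle to restore trail length one), which shifts every $N_{(i,j)}(\cdot)$, $1\le j\le n$, by the same amount; this is precisely the role of the intermediate graphs $G$ and $H$ in the paper's proof. With these two points supplied, your argument matches the published one.
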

\begin{proof}
($\Longrightarrow$)	Assume that $A_E\sim_{SSE} A_F$. By Theorem \ref{willimove}, there exists a graph $E'$ of Gelfand-Kirillov dimension three in normal form transformed from $E$ using a sequence of in-splittings, out-splittings,  in-amalgamations, and out-amalgamations such that $E'$ is isomorphic to  graph $F$. By Proposition \ref{prop-functor}(1), the pointed graphs of $E$ and $E'$ are, respectively, $(E, \{C^E_i\}^m_{i=1}, \{D^E_j\}^n_{j=1})$ and $(E', \{C^{E'}_i\}^m_{i=1}, \{D^{E'}_j\}^n_{j=1})$ with $p_i := |C^E_i| = |C^{E'}_i|$ for all $1\le i\le m$ and $q_j := |D^E_j| = |D^{E'}_j|$ for all $1\le j\le n$. 

Let $(i, j)\in \{1, \ldots, m\}\times \{1, \ldots, n\}$ be an arbitrary pair. By Proposition \ref{prop-functor}(2), there exists a bijection $$\Theta_{E, E'}: \textnormal{Trail}_E(C^E_i, D^E_j)\longrightarrow \textnormal{Trail}_{E'}(C^{E'}_i, D^{E'}_j)$$
such that $f_E(\alpha) = f_{E'}(\Theta_{E, E'}(\alpha))$ for all $\alpha\in \textnormal{Trail}_E(C^E_i, D^E_j)$. Suppose all edges of $E$ starting from $C^E_i$ have the same source $v^E_{a(i)}$ with index $a(i)$ ($1\le a(i)\le p_i$) and all edges of $E'$ starting from $C^{E'}_i$ have the same source $v^{E'}_{a'(i)}$ with index $a'(i)$ ($1\le a'(i)\le p_i$). We claim that 
\begin{equation}
N_{(i,j)}^E(c) = N_{(i,j)}^{E'} (c+a(i) - a'(i) )\ \text{ for all }  c \in \mathbb{Z}. 
\end{equation}
Indeed,  let $d_{(i,j)} = \gcd(p_i,q_j)$ and $c \in \mathbb{Z}.$ 
For every trail $\alpha\in \textnormal{Trail}_E(C^E_i, D^E_j)$, we have $f_E(\alpha) = f_{E'}(\Theta_{E, E'}(\alpha))$. Also, $r_E(\alpha)$ is a vertex on $D^E_j$ with index $c(E) \equiv f_E(\alpha) + a(i) + 1 \pmod{d_{i,j}}$ and $r_{E'}(\Theta_{E, E'}(\alpha))$ is a vertex on $D^{E'}_j$ with index $c(E') \equiv  f_{E'}(\Theta_{E, E'}(\alpha)) + a'(i) + 1 \pmod{d_{i,j}}$.  These observations show that $c(E) - c(E') \equiv a(i)- a'(i) \pmod{d_{i,j}}$. This implies that
\[
N_{(i,j)}^E (c) =N_{(i,j)}^{E'} (c + a(i) - a'(i)) \text{ for all } c \in \mathbb{Z},\] proving the claim.

Since $E'$ is isomorphic to $F$, we may assume that $v_{x}^{E'}$ is transformed to $v_{x+c(i)}^{F} $ for all $v_{x}^{E'} \in (C_i^{E'})^0 $  and $w_{x}^{E'} $ is transformed to $w_{x+d(j)}^{F}$   for all $w_{x}^{E'} \in (D_{j}^{E'})^0$. Therefore, we have \[
N_{(i,j)}^{E'}  (c) = N_{(i,j)}^F (c+ d(j)) \text{ for all } c \in \mathbb{Z},\] 
and so  \[
N_{(i,j)}^E (c) = N_{(i,j)}^F(c+a(i) - a'(i) + d(j))  \text{ for all }  c\in \mathbb{Z},\] thus showing that $E \approx F$.

($\Longleftarrow$) Suppose $(E, \{C^E_i\}^m_{i=1}, \{D^E_j\}^n_{j=1})$ and $(F, \{C^F_i\}^m_{i=1}, \{D^F_j\}^n_{j=1})$ are pointed graphs of Genlfand-Kirillov dimension three such that $E \approx F$.
Then, since $E \approx F$, there exist $m+n$ intergers $a_1,a_2,\ldots ,a_m , b_1,b_2 , \ldots ,b_n$ such that $N_{i,j}^E(c) = N_{i,j}^F(c+a_i+b_j)$ for all $c\in \mathbb{N}$, $1 \le i \le m$ and $1 \le j \le n.$ Let $p_i := p^E_i = p^F_i$ for all $1\le i \le m$ and $q_j := q^E_j = q^F_j$ for all $1\le j \le n$.
We define a graph $G = (G^0, G^1, r_G, s_G)$ as follows: $G^0 = E^0$, $G^1 = E^1$, $s_G = s_E$, and for each $1\le j\le n$, for each edge $e$ which $r_E(e)$ is a vertex with index $k$ in $D^E_j$, $r_G(e)$ is defined to be the vertex with index $(k + b_j) (\text{mod } q_j)$ in $D^E_j$, and $r_G(e) = r_E(e)$ for all other edges $e$. Then, $G$ is obviously a graph of Gelfand-Kirillov dimension in normal form and $N_{i,j}^E(c) = N_{i,j}^{G} (c+ b_j)$ for all $c\in \mathbb{N}$, $1 \le i \le m$ and $1 \le j \le n$, and so $N_{i,j}^{G} (c) = N_{i,j}^F(c+a_i)$ for all $c\in \mathbb{N}$, $1 \le i \le m$ and $1 \le j \le n.$ 

For each $1\le i\le m$, let $v_{x_i}$ be the vertex with index $x_i$ in $C^G_i$ which is the source of all trails of $G$ starting from $C^G_i$. Let $H$ be a graph of Gelfand-Kirillov dimension three in normal form obtained from $G$ by using Moves (I), (O) and their inverses to transform each vertex $v_{x_i}$ to the vertex with index
$(x_i - a_i) \pmod{p_i}$ in $C^D_i$. Then, by formula (2) cited above, we have $N_{i,j}^{H} (c + a_i ) = N_{i,j}^{G} (c)$ for all $c\in \mathbb{N}$, $1\le i \le m$ and $1\le j \le n$, and so $N_{i,j}^{H} (c)= N_{i,j}^F(c)$ for all $c\in \mathbb{N}$, $1\le i \le m$ and $1\le j \le n$. Therefore, without loss of generality, we can assume that  $a_i = b_j = 0$ for all $1\le i \le m$ and $1\le j \le n$, that means, we have $$N_{i,j}^{E} (c)= N_{i,j}^F(c)$$ for all $c\in \mathbb{N}$, $1\le i \le m$ and $1\le j \le n$.

Now, let $\alpha$ be an edge from $C^E_i$ to $D^E_j$ such that $r(\alpha)$  is the vertex on $D^E_j$ with index $t$ $(1\le t\le q_j)$. By Proposition \ref{prop-lengthen}, we may shift $r(\alpha)$ to the vertex on $D^E_j$ with index $r$, where $r$ is the remainder of $t$ when divide $d_{(i,j)} := d^E_{(i,j)} =d^F_{(i,j)}$, without changing the source and length of $\alpha$. Performing this procedure for all trails of $E$ and $F$. Then, we obtain that for every $1 \le i \le m, 1 \le j \le n$ and $1 \le c \le d_{(i,j)}$, there are $N^E_{(i,j)}(c)$ edges of $E$ starting from $C_{i}^E$ and ending at the vertex on $D^E_j$ with index $c$, and there are $N^F_{(i,j)}(c)$ edges of $F$ starting from $C_{i}^F$ and ending at the vertex on $D^F_j$ with index $c$.

For each $1\le i\le m$, let $a^E(i)$ be the index of the vertex on $C^E_i$ which is the source of all trails of $E$ starting from $C^E_i$, and $a^F(i)$ the index of the vertex on $C^F_i$ which is the source of all trails of $F$ starting from $C^F_i$. We define the map $\lambda: E\longrightarrow F$ by setting: for every $1\le j\le n$ and for every vertex $w^E_{b}$ on $D^E_j$ with index $b$ ($1\le b\le q_j$), $\lambda(w^E_{b})$ is defined as the vertex on $D^F_j$ with index $b$. For every $1\le i\le m$ and for every vertex $v^E_{c}$ on $C^E_i$ with index $c$ ($1\le c\le p_i$), $\lambda(v^E_{c})$ is defined as the vertex on $C^F_i$ with index $(c + x(i)) \pmod{p_i}$, where $x(i) = (a^E(i) - a^F(i)) \pmod{p_i}$. Then, since $N_{i,j}^{E} (c)= N_{i,j}^F(c)$ for all $c\in \mathbb{N}$, $1\le i \le m$ and $1\le j \le n$, we immediately get that $\lambda$ is a graph isomorphism. This implies that 
we can transform $E$ and $F$ into a same graph using using in-splittings, out-splittings,  in-amalgamations and out-amalgamations. By Theorem \ref{willimove}, we have $A_E \sim_{SSE} A_F$, this finishing the proof.	
\end{proof}	

It is worth mentioning the following example.

\begin{example} Let $E$ and $F$ be the following two graphs of Gelfand-Kirillov dimension three in normal form.

$$\xymatrixrowsep{1pc}\xymatrixcolsep{1pc}\xymatrix{&1\ar@/^1pc/[dd]\ar@/^0.6pc/[rrrr]\ar@/_0.6pc/[rrrr]\ar@/^0.6pc/[rrrrddd]&&&&1\ar@/^1pc/[dd]&&&1\ar@/^1pc/[dd]\ar@/^0.6pc/[rrrr]\ar@/_0.6pc/[rrrr]\ar[rrrrddddd]&&&&1\ar@/^1pc/[dd]\\&C^E_1&&&&D^E_1&&&C^F_1&&&&D^F_1&\\ E= &2\ar@/^1pc/[uu]&&&&2\ar@/^1pc/[uu]&&F=&2\ar@/^1pc/[uu]&&&&2\ar@/^1pc/[uu]\\&1\ar@/^1pc/[dd]\ar@/^0.6pc/[rrrru]\ar@/^0.6pc/[rrrrdd]&&&&1\ar@/^1pc/[dd]&&&1\ar@/^1pc/[dd]\ar@/^0.6pc/[rrrruuu]\ar@/^0.6pc/[rrrr]&&&&1\ar@/^1pc/[dd]\\&C^E_2&&&&D^E_2&&&C^F_2&&&&D^F_2&\\&2\ar@/^1pc/[uu]&&&&2\ar@/^1pc/[uu]&&&2\ar@/^1pc/[uu]&&&&2\ar@/^1pc/[uu]}$$	
We then have $p^E_i = p^F_i =2$ for all $1\le i\le 2$ and $q^E_j = q^F_j =2$ for all $1\le j\le 2$, and so $d^E_{(i, j)} = 2 = d^E_{(i, j)}$ for all $i, j$. Also, we have $N_{(1,1)}^E(c) = N_{(1,1)}^F(c)$, $N_{(1,2)}^E(c) = N_{(1,2)}^F(c +1)$, $N_{(2,1)}^E(c) = N_{(2,1)}^F(c +1)$ and $N_{(2,2)}^E(c) = N_{(2,2)}^F(c +1)$ for all $c\in \mathbb{N}$.

Assume that $A_E\sim_{SSE} A_F$, that means, $E$ can be  transformed into $F$ by a sequence of in-splittings, out-splittings,  in-amalgamations, and out-amalgamations. By Theorem \ref{numtheo-crite-sse1}, there exist integers $a_1$, $a_2$, $b_1$ and $b_2$ such that $N_{(i,j)}^E(c) = N_{(i,i)}^F(c + a_i + b_j)$ for all $c\in \mathbb{N}$ and for all $1\le i, j\le 2$. This implies that

$$\begin{cases}
a_1 + b_1\equiv 0 \mbox{ (mod 2)}\\
a_1 + b_2\equiv 1 \mbox{ (mod 2)}\\
a_2 + b_1\equiv 1 \mbox{ (mod 2)}\\
a_2 + b_2\equiv 1 \mbox{ (mod 2)},
\end{cases}$$
and so we have $b_2 - b_1\equiv 1 \mbox{ (mod 2)}$ and $b_2 - b_1\equiv 0 \mbox{ (mod 2)}$, a contradiction. Thus, $A_E$ is not strong shift equivalent to $A_F$.
\end{example}

Using Theorems \ref{willimove} and \ref{numtheo-crite-sse1}, we give a number-theoretic criterion for graphs of Gelfand-Kirillov dimension three to be strong shift equivalent.

\begin{thm}\label{numtheo-crite-sse2}
Let $E$ and $F$ be two  graphs of Gelfand-Kirillov dimension three, and let $E'$ and $F'$ be graphs in normal form obtained from $E$ and $F$ using in-splittings, out-splittings,  in-amalgamations and out-amalgamations, respectively. Then $A_E\sim_{SSE} A_F$ if and only if $E' \approx F'$.
\end{thm}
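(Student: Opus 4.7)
The plan is to reduce the statement directly to Theorem \ref{numtheo-crite-sse1} via Williams' Theorem \ref{willimove}. The point is that the relation $\sim_{SSE}$ on adjacency matrices is an equivalence relation (reflexivity and symmetry of elementary shift equivalence are built into the definition, transitivity follows by concatenating the sequences of elementary shift equivalences), and by Theorem \ref{willimove} it coincides with the relation generated by in-splittings, out-splittings, in-amalgamations and out-amalgamations on graphs.

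First I would invoke Proposition \ref{norform-prop}: since $E$ and $F$ are graphs of Gelfand-Kirillov dimension three, they can be transformed into normal forms $E'$ and $F'$ respectively by finite sequences of Williams moves. By Theorem \ref{willimove}, this gives $A_E \sim_{SSE} A_{E'}$ and $A_F \sim_{SSE} A_{F'}$. Using transitivity and symmetry of $\sim_{SSE}$, we obtain the equivalence
\[
A_E \sim_{SSE} A_F \quad\Longleftrightarrow\quad A_{E'} \sim_{SSE} A_{F'}.
\]
Since both $E'$ and $F'$ are graphs of Gelfand-Kirillov dimension three in normal form, Theorem \ref{numtheo-crite-sse1} applies and yields $A_{E'}\sim_{SSE}A_{F'}$ if and only if $E'\approx F'$. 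Combining the two equivalences completes the proof.

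There is really no obstacle here beyond checking that the statement is well-posed: one should observe that the conclusion does not depend on the particular choice of normal forms $E'$ and $F'$. Indeed, if $E''$ is another normal form obtained from $E$ by Williams moves, then $A_{E'} \sim_{SSE} A_{E''}$ (via $A_E$), so Theorem \ref{numtheo-crite-sse1} gives $E' \approx E''$; thus the validity of the condition $E'\approx F'$ is independent of the chosen normal forms. With that remark in place, the proof is just a three-line assembly of Proposition \ref{norform-prop}, Theorem \ref{willimove}, and Theorem \ref{numtheo-crite-sse1}.
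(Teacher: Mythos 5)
Your proof is correct and takes essentially the same route as the paper's: both use Theorem \ref{willimove} to get $A_E\sim_{SSE}A_{E'}$ and $A_F\sim_{SSE}A_{F'}$, reduce via transitivity to the normal forms, and then apply Theorem \ref{numtheo-crite-sse1}. Your extra remark that the condition $E'\approx F'$ is independent of the chosen normal forms is a worthwhile observation the paper omits.
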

\begin{proof}
By Theorem \ref{willimove}, we always have $A_E\sim_{SSE} A_{F}$ and 	$A_F\sim_{SSE} A_{F'}$, and so  $A_{E}\sim_{SSE} A_{F}$ if and only if $A_{E'}\sim_{SSE} A_{F'}$. By Theorem \ref{numtheo-crite-sse1}, we obtain that $A_{E'}\sim_{SSE} A_{F'}$ if and only if $E' \approx F'$. From these observations, we immediately get that $A_E\sim_{SSE} A_F$ if and only if $E' \approx F'$, thus finishing the proof.
\end{proof}	

\section{Williams' Conjecture for graphs of Gelfand-Kirillov dimension $3$} \label{sec4}
In this section, based on the previous sections, we show that both Williams' Conjecture and The Graded Classification Conjecture hold for the class of graphs of Gelfand-Kirillov dimension three (Theorem \ref{mainthm}). Consequently, we obtain that the Leavitt path algebras of  graphs of Gelfand-Kirillov dimension three are graded Morita equivalent if and only if their graph $C^*$-algebras are equivariant Morita equivalent (Corollary \ref{maintheo-cor1}), as well as show that for two finite graphs $E$ and $F$ of Gelfand-Kirillov dimension three, their adjacency matrices are shift equivalent if and only if the singularity categories $\text{D}_{\text{sg}}(KE/J_E^2)$ and $\text{D}_{\text{sg}}(KF/J_F^2)$ are triangulated equivalent (Corollary \ref{maintheo-cor2}).\medskip

We begin this section by determining all elements of the talented monoid of a graph of Gelfand-Kirillov dimension three in normal form.

\begin{lemma}\label{lem-uni-rep}
Let $E$ be a graph of Gelfand-Kirillov dimension three in normal form  with all source cycles  $C_1^E,\ldots , C_m^E$ and all sink cycles $D_1^E,\ldots, D_n^E$. For each $1\le i\le m$, let $v_i$ be the vertex on $C_i^E$ which is the source of all trails of $E$ starting from $C_i^E$. For each $1\le j\le m$, let $w_j$ be an arbitrary vertex on $D_j^E$. Then for any element $x \in T_{E}$, there exist $c_1,c_2,\ldots ,c_m \in \mathbb{Z}$ such that $x$ may be uniquely written as
\[
\sum \limits_{i=1}^{m} \left( \sum \limits_{0 \le k <p_i}^{} a_{k,i} v_i(c_i - k) \right) + \sum \limits_{j=1}^{n} \left( \sum \limits_{0 \le k <q_j}^{} b_{k,j} w_j(k) \right)
\]  
	where $p_i= \left|C_i^{E}\right|$, $q_j=\left|D_{j}^{E}\right|$ and $a_{k,i}, b_{k,j} \in \mathbb{N}$. 
\end{lemma}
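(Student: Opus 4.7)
The plan is to prove existence constructively by reducing via the defining relations of $T_E$, and to prove uniqueness by projecting onto a quotient monoid with an explicit direct-sum structure.

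For existence, I would first note that in a normal-form graph every vertex lies on some $C_i^E$ or $D_j^E$, so any $x\in T_E$ is a finite $\mathbb N$-sum of generators $u(c)$. For $u\in(C_i^E)^0\setminus\{v_i\}$, the vertex $u$ emits a single edge, so the defining relation yields $u(c)=u'(c+1)$; iterating around $C_i^E$ gives $u(c)=v_i(c+d)$ with $d$ the distance from $u$ to $v_i$. Analogously, any generator at $u\in (D_j^E)^0\setminus\{w_j\}$ reduces to a shift of $w_j$. Since $D_j^E$ is a sink cycle with no exits, going around it yields $w_j(l)=w_j(l+q_j)$, so $w_j$ is $q_j$-periodic and all of its shifts may be moved into $\{0,\ldots,q_j-1\}$. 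Finally, traversing $C_i^E$ once starting from $v_i$ gives the key identity
\[
v_i(c)=v_i(c+p_i)+\sum_{\substack{e\in s^{-1}(v_i)\\ r(e)\notin (C_i^E)^0}} r(e)(c+1),
\]
which pushes a $v_i$-occurrence up by $p_i$ at the cost of a sink-cycle term (which we then re-normalize by periodicity). Applying this repeatedly at the smallest $v_i$-shift strictly decreases its multiplicity there while leaving the maximum shift unchanged (since $\min+p_i\le\max$ whenever the span is at least $p_i$); after finitely many iterations all $v_i$-shifts lie in a window $[c_i-p_i+1,c_i]$ of width less than $p_i$, giving the desired form.

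For uniqueness, I would use the cancellativity of $T_E$ (noted after Theorem \ref{hgfgfgggf}) together with the standard ideal/quotient correspondence for hereditary saturated subsets. Set $H=\bigcup_j (D_j^E)^0$ and let $I_H\subseteq T_E$ be the $\mathbb Z$-order ideal it generates. The quotient graph $E/H$ is the disjoint union $\bigsqcup_i C_i^E$, so $T_E/I_H\cong \bigoplus_{i=1}^m T_{C_i^E}\cong \bigoplus_{i=1}^m\bigoplus_{k=0}^{p_i-1}\mathbb N$ by the cyclic description of $T_{C_i^E}$ recalled in Proposition~\ref{corcycle}. Under this isomorphism, the images of $v_i(c_i-k)$ for $0\le k<p_i$ are precisely the $p_i$ free generators of the $i$-th summand, so the image of $x$ in $T_E/I_H$ pins down the $a_{k,i}$'s uniquely. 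Cancelling the common $v_i$-part from any two representations with the same $c_i$'s then reduces the remaining equation to one inside $I_H\cong \bigoplus_j\bigoplus_{k=0}^{q_j-1}\mathbb N$, whose freeness forces $b_{k,j}=b_{k,j}'$.

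The main obstacle is the bookkeeping in the existence step: push-ups on $v_i$ release sink-cycle material that must be re-normalized by $q_j$-periodicity, and one has to rule out a loop in the interleaving. The cleanest termination argument is that push-ups preserve the maximum $v_i$-shift while the minimum weakly increases (strictly once the multiplicity at the current minimum is exhausted), and that the sink-cycle reductions do not create new $v_i$-shifts, so the two reduction procedures are effectively independent and terminate after finitely many steps. A secondary, routine point is the compatibility of the isomorphism $T_E/I_H\cong T_{E/H}$ with the $\mathbb Z$-action, which is standard for talented monoids of row-finite graphs and is needed to guarantee that $[v_i(c_i-k)]$ lands on the expected generator of the cyclic summand.
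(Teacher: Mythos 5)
Your proposal is correct and follows essentially the same route as the paper: existence by rewriting all source-cycle generators as shifts of $v_i$ and pushing them into a window of width $p_i$ via $v_i(c)=v_i(c+p_i)+(\text{sink-cycle terms})$, normalizing sink-cycle generators by $q_j$-periodicity; uniqueness by passing to the quotient by the order ideal generated by the sink cycles (which is $\bigoplus_i\mathbb N^{p_i}$, pinning down the $a_{k,i}$), then cancelling and using freeness of $\bigoplus_j\mathbb N^{q_j}$ for the $b_{k,j}$. The only cosmetic difference is your iterative min/max termination argument for the push-ups, where the paper simply takes $c_i=\max\{a+\ell^v_i+p_i\}$ and chooses the shift landing in $(c_i-p_i,c_i]$ directly.
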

\begin{proof}
Since $T_E$ is a monoid generated by $ \{ v(a), w(b) \mid v \in (C^E_i)^0\ (1 \le i\le m), w\in (D^E_j)^0\ (1\le j \le n), a, b \in \mathbb{Z}\}$,   we can write $x$ in the form:
\begin{equation} x = \sum \limits_{i=1}^{m} \left( \sum \limits_{v \in (C^E_i)^0}^{}  \left( \sum \limits_{a \in \mathbb{Z}}^{} c_{v,a} v(a) \right) \right) +\sum \limits_{j=1}^{n} \left( \sum \limits_{w \in (D^E_j)^0}^{}  \left( \sum \limits_{b \in \mathbb{Z}}^{} d_{w,b} w(b) \right) \right) 
\end{equation}
where $c_{v,a},  d_{w,b}\in \mathbb{N}$, and $c_{v, a} = 0$ and $ d_{w,b} = 0$ for  cofinitely many indices $a$ and $b$, respectively. Let $S_E$ be the submonoid of $T_E$ generated by the set $\{w(k)\mid w\in \bigcup^n_{j=1}(D^E_j)^0 \text{ and } k\in \mathbb{Z}\}$. For each $1\le i \le m$ and for each $v\in (C^E_i)^0$, we have $v(a) = v_i(a + \ell^v_i)$ for all $a\in \mathbb{Z}$, where $\ell^v_i$ is the length of the path in $E$ starting at $v$ and ending at $v_i$. We also have $v_i(a) = v_i(a + p_i) + x(a)$, where $x(a)\in S_E$, for all $a\in \mathbb{Z}$. Let $$c_i:= \max\{a + \ell^v_i + p_i\mid v\in (C^E_i)^0 \text{ and } a\in \mathbb{Z} \text{ with } c_{v,a}\neq 0\}.$$ Let $v$ be a vertex on $(C^E_i)^0$ with $c_{v, a}\neq 0$ and $t$ an integer such that $a + \ell^v_i + tp_i\le c_i$ and $a + \ell^v_i + (t+1)p_i > c_i$. We then have $c_i -p_i < a + \ell^v_i + tp_i\le c_i$ and 
$v(a) = v_i(a + \ell^v_i + tp_i) + x$ for some $x\in S_E$. From these observations and Equation (3), we obtain that

\begin{equation}
x= \sum \limits_{i=1}^{m} \left( \sum \limits_{0 \le k <p_i}^{} a_{k,i} v_i(c_i - k) \right) + y
\end{equation}
for some $y\in S_E$.
	
We note that for each $1\le j\le n$, for each $w\in (D^E_j)^0$ and for every $b\in \mathbb{Z}$, we have $w(b) = w_j(b + \ell^w_j)=  w_j(b + \ell^w_j + tq_j)$ for all $t\in \mathbb{Z}$, where $\ell^w_j$ is the length of the path in $E$ starting at $w$ and ending at $w_j$. Let $t := - \lfloor\frac{b + \ell^w_j}{q_j}\rfloor$. We then have $0\le b + \ell^w_j + tq_j < q_j$ and $w(b) = w_j(b + \ell^w_j + tq_j)$. From this note and Equation (4), we immediately get that 
\[
x=\sum \limits_{i=1}^{m} \left( \sum \limits_{0 \le k <p_i}^{} a_{k,i} v_i(c_i - k) \right) + \sum \limits_{j=1}^{n} \left( \sum \limits_{0 \le k <q_j}^{} b_{k,j} w_j(k) \right),
\]  
as desired. Now we show that this presentation is unique. Assume that we can also write
$$x=\sum \limits_{i=1}^{m} \left( \sum \limits_{0 \le k <p_i}^{} a'_{k,i} v_i(c_i - k) \right) + \sum \limits_{j=1}^{n} \left( \sum \limits_{0 \le k <q_j}^{} b'_{k,j} w_j(k) \right).$$
	
We claim that $a_{k,i}=a_{k,i}^{\prime}$ for all $0 \le i \le m, \ 0 \le k \le p_i-1 $ and $b_{k, j}=b'_{k, j}$ for all $1 \le j \le n, 0 \le k\le q_j-1$. Indeed, consider the $\mathbb{Z}$-order ideal $\langle  w_1, \ldots, w_n\rangle$ of $T_E$ generated by 
$w_1\ldots, w_n$. We note that it is not hard to see that $\langle  w_1, \ldots, w_n\rangle = S_E$. Since $v_i(t) \ge v_i(t+ p_i)$ for all $t\in \mathbb{Z}$ and for all $1\le i\le m$, we have $v_i(t)\notin \langle  w_1, \ldots, w_n\rangle$ for all $t\in \mathbb{Z}$. By \cite[Lemma 3.5]{alfi}, the natural inclusion $T_{\sqcup^m_{i=1} C^E_i} \hookrightarrow T_{E}$ descends to an isomorphism $T_E/\langle  w_1, \ldots, w_n\rangle \cong T_{\sqcup^m_{i=1} C^E_i}$. By Lemma \ref{confuu}, we have $T_{\sqcup^m_{i=1} C^E_i}\cong \bigoplus^m_{i=1}T_{C^E_i}$ as $\mathbb{Z}$-monoids and $T_{C^E_i}\cong \mathbb{N}^{p_i}$ for all $1\le i\le m$. The images of the presentations of $x$ in this quotient monoid are
$$\sum \limits_{i=1}^{m} \left( \sum \limits_{0 \le k <p_i}^{} a_{k,i} v_i(c_i - k) \right) = \sum \limits_{i=1}^{m} \left( \sum \limits_{0 \le k <p_i}^{} a'_{k,i} v_i(c_i - k) \right).$$
Since $T_{\sqcup^m_{i=1} C^E_i}$ is a $\mathbb Z$-cyclic monoid of rank $\sum^m_{i=1}p_i$, it follows that $a_{k,i}=a'_{k,i}$ for all $0 \le i \le m, \ 0 \le k \le p_i-1 $. Since $T_{E}$ is cancellative, we can remove these portions from the representation of $x$ and obtain that
	\begin{equation}
	\sum \limits_{j=1}^{n} \left( \sum \limits_{0 \le k <q_j}^{} b'_{k,j} w_j(k) \right) =\sum \limits_{j=1}^{n} \left( \sum \limits_{0 \le k <q_j}^{} b_{k,j} w_j(k) \right).
	\end{equation}
Since the cycle $D_{j}^{E}$ containing $w_j$ has no exits for all $1\le j\le n$, the set $\bigcup^n_{j=1}(D^E_j)^0 $ form a hereditary saturated subset of $E^0$. Consider the monoid $T_{\sqcup^n_{j=1} D_{j}^{E}}$. Using Lemma \ref{confuu}, and the fact that $\bigcup^n_{j=1}(D^E_j)^0 $ is hereditary and saturated, it is not hard to see that the canonical map $T_{\sqcup^n_{j=1} D_{j}^{E}} \rightarrow T_{E}$ is injective (see, also, \cite[Lemma 3.5]{alfi}). Therefore, the equality (5) also holds in $T_{\sqcup^n_{j=1} D_{j}^{E}}$. However, the monoid $T_{\sqcup^n_{j=1} D_{j}^{E}}\cong \bigoplus^n_{j=1}\mathbb{N}^{q_j}$ is a $\mathbb Z$-cyclic monoid of rank $\sum^n_{j=1}q_j$, and so it follows that $b_{k,j}=b'_{k,j}$ for all $1 \le j \le n$ and $0 \le k\le q_j-1$, thus finishing the proof.
\end{proof}

Following \cite[Subsection 2.2, p. 325]{alfi}, a nonzero element $x$ in a commutative monoid $M$ is called an {\it atom} if $x = y +z$ then $y= 0$ or $z =0$.

The following lemma describes all atoms of the talented monoid of a graph of Gelfand-Kirillov dimension three. 
	
\begin{lemma}\label{lem-atom}
Let $E$ be a graph of Gelfand-Kirillov dimension three in normal form  with all source cycles  $C_1^E,\ldots , C_m^E$ and all sink cycles $D_1^E,\ldots, D_n^E$. For each $1\le j\le m$, let $q_j = |D^E_j|$ and let $w_j$ be an arbitrary vertex on $D_j^E$. Let $x$ be a nonzero element in $T_E$. Then, $x$ is an atom if and only if  there exist two integers $j$ and $k $ such that $1\le j \le n,\ 0 \le k \le q_{j} - 1$ and  $x= w_j(k)$. \end{lemma}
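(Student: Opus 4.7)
The plan rests on the unique representation of elements of $T_E$ provided by Lemma \ref{lem-uni-rep}. For the $(\Longleftarrow)$ direction, I would first note that since the sink cycle $D^E_j$ has no exits, the set $(D^E_j)^0$ is hereditary and saturated in $E^0$. Consequently, as exploited in the proof of Lemma \ref{lem-uni-rep}, the canonical map $T_{D^E_j} \hookrightarrow T_E$ is an injective $\mathbb{Z}$-monoid homomorphism whose image is a $\mathbb{Z}$-order ideal of $T_E$. Since $D^E_j$ is a single cycle of length $q_j$, the monoid $T_{D^E_j}$ is the $\mathbb{Z}$-cyclic monoid of rank $q_j$ (Definition \ref{cycmond}), i.e. isomorphic to $\bigoplus_{k=0}^{q_j-1}\mathbb{N}$, whose atoms are precisely the standard basis vectors $w_j(0),\dots,w_j(q_j-1)$. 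Any decomposition $w_j(k)=y+z$ in $T_E$ forces $y,z$ into this order ideal by hereditarity, so the decomposition actually lives in $T_{D^E_j}$; there the atomicity of the basis element gives $y=0$ or $z=0$.

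For the $(\Longrightarrow)$ direction, let $x$ be a nonzero atom and expand it in its unique representation
\[
x = \sum_{i=1}^{m}\sum_{0\le k<p_i} a_{k,i}\,v_i(c_i-k) \;+\; \sum_{j=1}^{n}\sum_{0\le k<q_j} b_{k,j}\,w_j(k).
\]
If the total multiplicity $\sum a_{k,i}+\sum b_{k,j}$ exceeds one, I can peel off a single generator $t$ (of the form $v_i(c_i-k)$ or $w_j(k)$), write $x=t+x'$, and appeal to the uniqueness clause of Lemma \ref{lem-uni-rep} to conclude $x'\neq 0$, contradicting atomicity. Hence the representation of $x$ consists of a single generator with coefficient $1$, so $x$ is either some $v_i(c_i-k)$ or some $w_j(k)$. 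To rule out the first case, I use the normal form together with connectedness of $E$: the vertex $v_i$ has both a cycle edge to the next vertex $v'_i$ of $C^E_i$ and at least one trail-edge $e$ to some sink-cycle vertex $r(e)$. The defining relation of the talented monoid then gives
\[
v_i(c_i-k) \;=\; v'_i(c_i-k+1) \;+\; \sum_{e\in s^{-1}(v_i),\, r(e)\notin (C^E_i)^0} r(e)(c_i-k+1),
\]
a decomposition into two nonzero summands; so such an $x$ is not an atom. This forces $x=w_j(k)$ with $0\le k<q_j$, finishing the proof.

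The main subtlety is in the forward direction, where the ``peeling off a generator'' step must be justified via the uniqueness in Lemma \ref{lem-uni-rep}: without it one cannot rule out that the leftover expression is zero in $T_E$ for some non-obvious reason. Once uniqueness is invoked, the remaining work is purely mechanical — unpacking the Cuntz--Krieger relation at $v_i$ and using that connectedness of a graph of Gelfand--Kirillov dimension three guarantees at least one trail emanating from each source cycle.
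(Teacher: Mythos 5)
Your proof is correct and follows essentially the same route as the paper: both directions rest on the unique representation from Lemma \ref{lem-uni-rep}, with the forward direction ruling out multiplicity $\ge 2$ and then eliminating the source-cycle generators $v_i(c_i-k)$ by exhibiting a nontrivial decomposition (you flow $v_i$ one step via the defining relation, the paper flows it once around the cycle to get $v_i(c)=v_i(c+p_i)+y$ — both hinge on the same fact, which you rightly justify by connectedness, that each source cycle emits at least one trail). Your hereditary-order-ideal argument for the converse is a welcome expansion of the paper's one-line ``follows from Lemma \ref{lem-uni-rep}.''
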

\begin{proof}  
($\Longrightarrow$). Assume that $x$ is an atom in $T_E$. For each $1\le i\le m$, let $v_i$ be the vertex on $C_i^E$ which is the source of all trails of $E$ starting from $C_i^E$. 
 By Lemma \ref{lem-uni-rep}, $x$ can be uniquely written in the form: 
\[
x=   \sum \limits_{i=1}^{m} \left( \sum \limits_{0 \le k <p_i}^{} a_{k,i} v_i(c_i - k) \right) + \sum \limits_{j=1}^{n} \left( \sum \limits_{0 \le k <q_j}^{} b_{k,j} w_j(k) \right),\] 	where $p_i= \left|C_i^{E}\right|$ and $a_{k,i}, b_{k,j} \in \mathbb{N}$. Since $x$ is an atom, we get that $$\sum \limits_{i=1}^{m} \left( \sum \limits_{0 \le k <p_i}^{} a_{k,i} v_i(c_i - k) \right) =0 \text{ or } 
\sum \limits_{j=1}^{n} \left( \sum \limits_{0 \le k <q_j}^{} b_{k,j} w_j(k) \right)=0.$$ If $\sum \limits_{i=1}^{m} \left( \sum \limits_{0 \le k <p_i}^{} a_{k,i} v_i(c_i - k) \right) \neq 0$, then $a_{k, i}\neq 0$ for some $1\le i\le m$ and for some $0\le k\le p_i-1$. We then have $v_i(c_i-k) = v_i(c_i-k + p_i) + y$, where $y$ is a nonzero element of the $\mathbb{Z}$-order ideal $\langle  w_1, \ldots, w_n\rangle$ of $T_E$ generated by 
$w_1\ldots, w_n$, and so $x$ can be represented as a sum of nonzero elements in $T_E$. This implies that $x$ is not an atom, a contradiction. Therefore, we must have $\sum \limits_{i=1}^{m} \left( \sum \limits_{0 \le k <p_i}^{} a_{k,i} v_i(c_i - k) \right) =0$. Then, since $x\neq 0$, we obtain that $x=\sum \limits_{j=1}^{n} \left( \sum \limits_{0 \le k <q_j}^{} b_{k,j} w_j(k) \right)\neq 0$, that means, $\sum \limits_{j=1}^{n} \left( \sum \limits_{k=0}^{q_{j}-1} b_{k,j} \right)$ is a positive integer. If $\sum \limits_{j=1}^{n} \left( \sum \limits_{k=0}^{q_{j}-1} b_{k,j} \right)\ge 2$, then $x$  can be represented as a sum of nonzero elements in $T_E$, and so $x$ is not an atom, a contradiction. Therefore, we have $\sum \limits_{j=1}^{n} \left( \sum \limits_{k=0}^{q_{j}-1} b_{k,j} \right) = 1$, equivalently,  $y= w_j (k)$ for some  $1 \le j \le n , 0 \le k \le q_j - 1.$ 

$(\Longleftarrow)$. It immediately follows from Lemma \ref{lem-uni-rep}, thus finishing the proof.
\end{proof}

We are now in a position to give the main result of this article, showing that both Williams' Conjecture and The Graded Classification Conjecture (Conjecture \ref{conjehfyhtr}) hold for graphs of Gelfand-Kirillov dimension three.

\begin{theorem}\label{mainthm}
Let $E$ and $F$ be essential graphs, where $E$ is a graph of Gelfand-Kirillov dimension three, and let $A_E$ and $A_F$ be the adjacency matrices of $E$ and $F$, respectively.  Let $K$ be an arbitrary field. Then the following are equivalent:
	
$(1)$ The Leavitt path algebras $L_K(E)$ and $L_K(F)$ are graded Morita equivalent;
 
$(2)$ There is an order-preserving $\mathbb{Z}\left[x, x^{-1}\right]$-module isomorphism $K_{0}^{\mathrm{gr}}(L_K(E)) \rightarrow K_{0}^{\mathrm{gr}}(L_K(F))$;

$(3)$ The talented monoids $T_{E}$ and $T_{F}$ are $\mathbb{Z}$-isomorphic;

$(4)$ $A_E\sim_{SE} A_F$;

$(5)$ $A_E\sim_{SSE} A_F$. 	
\end{theorem}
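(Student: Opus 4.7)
The implications $(1)\Rightarrow(2)$, $(2)\Leftrightarrow(3)$, $(2)\Leftrightarrow(4)$, $(5)\Rightarrow(4)$, and $(5)\Rightarrow(1)$ hold for every finite essential graph, by tools already available: $(1)\Rightarrow(2)$ is standard, $(2)\Leftrightarrow(3)$ follows from the isomorphism $\mathcal{V}^{\gr}(L_K(E))\cong T_E$ together with the fact that the cancellative monoid $T_E$ is precisely the positive cone of its group completion $K_0^{\gr}(L_K(E))$, $(2)\Leftrightarrow(4)$ is Corollary~\ref{h99}, $(5)\Rightarrow(4)$ is immediate, and $(5)\Rightarrow(1)$ combines Theorem~\ref{willimove} with the fact that the graph moves (I) and (O) and their inverses induce graded Morita equivalences of Leavitt path algebras. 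Hence only the Williams-type implication $(4)\Rightarrow(5)$ uses the GK-dimension-three hypothesis.

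Assume $A_E\sim_{SE}A_F$. Combining Corollary~\ref{h99} with cancellativity produces a $\mathbb{Z}$-monoid isomorphism $\phi:T_E\to T_F$. Theorem~\ref{corcycle1} then guarantees that $F$ is a graph with disjoint cycles whose cycle poset $\mathcal{C}_F$ is in order- and length-preserving bijection with $\mathcal{C}_E$; in particular $F$ has GK dimension three with the same number $m$ of source cycles and $n$ of sink cycles as $E$. After relabeling, write $p_i:=|C^E_i|=|C^F_i|$ and $q_j:=|D^E_j|=|D^F_j|$. Corollary~\ref{norform-cor} lets us further replace $E$ and $F$ by SSE-equivalent graphs in normal form, so that Theorem~\ref{numtheo-crite-sse1} becomes applicable. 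The remaining task is to exhibit integers $a_1,\dots,a_m,b_1,\dots,b_n$ with $N^E_{(i,j)}(c)=N^F_{(i,j)}(c+a_i+b_j)$ for every $c$.

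Lemma~\ref{lem-atom} identifies the atoms of $T_E$ (resp.\ $T_F$) as the shifts $w^E_j(k)$ (resp.\ $w^F_j(k)$); these form $n$ distinct $\mathbb{Z}$-orbits of sizes $q_1,\dots,q_n$. Since atomicity is purely algebraic, $\phi$ restricts to an isomorphism between the atom ideals $\langle w^E_1,\dots,w^E_n\rangle$ and $\langle w^F_1,\dots,w^F_n\rangle$ and bijects the atom orbits; after aligning the orbit bijection with the already-fixed bijection of sink cycles, we obtain integers $b_j$ with $\phi(w^E_j)=w^F_j(b_j)$, and therefore $\phi(w^E_{j,l})=w^F_{j,l+b_j}$ by $\mathbb{Z}$-equivariance. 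Simultaneously, $\phi$ descends to an isomorphism $T_E/\langle\text{atoms}\rangle\cong T_F/\langle\text{atoms}\rangle$, which by the proof of Lemma~\ref{lem-uni-rep} equals $\bigoplus_{i=1}^{m}T_{C^E_i}\cong\bigoplus_{i=1}^{m}T_{C^F_i}$; each summand is $\mathbb{Z}$-cyclic of rank $p_i$, so matching source-cycle summands via the order-preserving bijection and using that every $\mathbb{Z}$-automorphism of a $\mathbb{Z}$-cyclic monoid is a shift produces integers $a_i$ with $\overline{\phi(v^E_i)}=\overline{v^F_i(a_i)}$ in the quotient.

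The technical heart of the argument is the verification of the count identity. Since every trail in normal form is a single edge leaving $v^E_i$, running the defining relation once around $C^E_i$ gives, in $T_E$,
\[
v^E_i\;=\;v^E_i(p_i)\;+\;\sum_{j=1}^{n}\sum_{l=0}^{q_j-1}N^E_{i,j,l}\,w^E_{j,l}(1),
\]
where $N^E_{i,j,l}$ is the number of edges from $v^E_i$ to the $l$-th vertex of $D^E_j$, and $N^E_{(i,j)}(c)=\sum_{l\equiv c\,(d_{(i,j)})}N^E_{i,j,l}$. Applying $\phi$ and using $\phi(w^E_{j,l})=w^F_{j,l+b_j}$ yields an identity for $\phi(v^E_i)-\phi(v^E_i)(p_i)$ in the group completion of the atom ideal. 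Writing $\phi(v^E_i)=v^F_i(a_i)+y$ with $y$ in the atom ideal and comparing with the analogous once-around relation for $v^F_i(a_i)$ in $T_F$, the main obstacle is to eliminate the resulting correction $y(p_i)-y$. Expanding $y=\sum_{j,l}c_{j,l}[w^F_{j,l}]$ and noting that the shift by $p_i$ permutes each $d_{(i,j)}$-residue class on $D^F_j$ cyclically, one finds that the correction sums to zero on every such $d_{(i,j)}$-coset by a telescoping argument. The uniqueness clause of Lemma~\ref{lem-uni-rep} (equivalently, the linear independence of $[w^F_{j,l}]$ in the group completion of the atom ideal) then permits equating coefficients on each coset, and a reindexing yields precisely $N^E_{(i,j)}(c)=N^F_{(i,j)}(c+a_i+b_j)$. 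With $E\approx F$ established, Theorem~\ref{numtheo-crite-sse1} delivers $A_E\sim_{SSE}A_F$, completing the proof.
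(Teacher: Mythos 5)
Your proposal is correct, and it reaches the paper's conclusion by the same overall skeleton (the easy implications, then $(4)\Rightarrow(5)$ via a $\mathbb{Z}$-isomorphism $\phi:T_E\to T_F$, reduction to normal form, Lemmas~\ref{lem-uni-rep} and~\ref{lem-atom} to pin down $\phi$ on atoms and on the source-cycle quotient, and finally Theorem~\ref{numtheo-crite-sse1}). The genuine difference is in the technical heart: the paper extracts the counting identity $N^E_{(i,j)}(c)=N^F_{(i,j)}(c-a_i+b_j)$ by realizing $T_E$ as the inductive limit over $A_E$, building an injective intertwiner $C$ with $CA_E=A_FC$, and grinding through the block structure of $C$ to obtain the recursion among the $B_{x,t}$ that telescopes to Equation (10). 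You instead apply $\phi$ to the single ``once-around-the-cycle'' relation $v^E_i=v^E_i(p_i)+\sum_{j,l}N^E_{i,j,l}\,w^E_{j,l}(1)$, cancel against the analogous relation for $v^F_i(a_i)$ using cancellativity of $T_F$, and equate coefficients in the free submonoid $\langle w^F_1,\dots,w^F_n\rangle\cong\bigoplus_j\mathbb{N}^{q_j}$; the unknown sink-correction $y$ in $\phi(v^E_i)=v^F_i(a_i)+y$ contributes $c_{j,t}-c_{j,t+p_i}$ to the coefficient of $w^F_{j,t}$, and since $t\mapsto t+p_i$ permutes each residue class mod $d_{(i,j)}=\gcd(p_i,q_j)$ inside $\mathbb{Z}/q_j$, these terms cancel upon summing over a coset, leaving exactly the desired identity. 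I checked this computation and it is sound; it buys a substantially shorter and more conceptual derivation that never leaves the monoid (no intertwining matrix, no block bookkeeping), at the cost of being presented only in outline. Two small points you should make explicit if you write this up: (i) the lift from $\overline{\phi(v^E_i)}={}^{a_i}\overline{v^F_i}$ in the quotient to $\phi(v^E_i)=v^F_i(a_i')+y$ in $T_F$ uses the uniqueness clause of Lemma~\ref{lem-uni-rep} and only determines $a_i'$ modulo $p_i$, which is harmless since $d_{(i,j)}\mid p_i$; and (ii) the permutation of same-rank cyclic summands under $\bar\phi$ is fixed by the atom argument of Lemma~\ref{lem-atom} applied in the quotient (as in the paper), not automatically by the cycle bijection of Theorem~\ref{corcycle1}.
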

\begin{proof}
$(1) \Longrightarrow (2)$. By \cite[Theorem 2.3.7]{hazbk}, the graded Morita equivalence gives rise to invertible bimodules, which in turn induce an isomorphism on the level of graded $K^{\gr}_{0}$. 

$(2) \Longleftrightarrow (3)$. By \cite{hazli}, the positive cone of the graded Grothendieck group $K_{0}^{\operatorname{gr}}(L_K(E))$ is $\mathcal{V}^{\operatorname{gr}}(L_K(E))$ and $\mathcal{V}^{\operatorname{gr}}(L_K(E)) \cong T_{E}$ as $\mathbb{Z}$-monoids. This implies the equivalence.

$(3) \Longleftrightarrow (4)$. This immediately follows from Corollary \ref{h99} and $(2) \Longleftrightarrow (3)$.

$(4) \Longrightarrow (5)$.  Assume that $A_E\sim_{SE} A_F$. Since $E$ is a graph of Gelfand-Kirillov dimension three and by Theorem \ref{corcycle1}, $F$ is also a graph of Gelfand-Kirillov dimension three, and that the pointed graphs of $E$ and $F$ are, respectively, $(E, \{C^E_i\}^m_{i=1}, \{D^E_j\}^n_{j=1})$ and $(F, \{C^{F}_i\}^m_{i=1}, \{D^{F}_j\}^n_{j=1})$ with $p^E_i = |C^E_i|$, $p^F_i = |C^{F}_i|$ for all $1\le i\le m$ and $q^E_j = |D^E_j|$, $q^F_j = |D^{F}_j|$ for all $1\le j\le n$. By the equivalence of $(3)$ and $(4)$, the talented monoids $T_E$ and $T_E$ are $\mathbb{Z}$-isomorphic to each other. Assume that $\varphi: T_{E} \rightarrow T_{F}$ is a $\mathbb{Z}$-isomorphism. By Corollary \ref{norform-cor}, we may assume that $E$ and $F$ are graphs of Gelfand-Kirillov dimension three in normal form. For each $1\le i\le m$, let $v^E_i$ be the vertex on $C_i^E$ which is the source of all trails of $E$ starting from $C_i^E$, and  for each $1\le j\le m$, let $w^E_j$ be an arbitrary vertices on $D_j^E$. By Lemma \ref{lem-atom}, $w_j^E$ is an atom  in $T_E$ for all $1\le j\le n$, and so  $\phi(w_j^E)$ is an atom  in $T_E$ for all $1\le j\le n$. For each $1\le j\le n$, by Lemma \ref{lem-atom} again,  there is a unique integer $1\le k_j\le n$ such that 
\[\varphi\left(w_{j}^E\right)={ }^{b_j} w_{k_j}^{F}\]
for some  $w^F_{k_j}\in (D^F_{k_j})^0$ and for some $0 \leq b_j \leq q_{k_j}^{F}-1$. Suppose there exist distinct integers $1\le j_1 \neq j_2\le n$ such that $w^F_{k_{j_1}}$ and $w^F_{k_{j_2}}$ lie on the same cycle, say $D^F_{k_{j_1}}$ for example. Then, there exists an integer $c$ such that $w^F_{k_{j_2}}(k) = w^F_{k_{j_1}}(k+c)$ for all $k\in \mathbb{Z}$, and so $$\varphi(w^E_{j_2}(-b_{j_2})) = w^F_{k_{j_2}} = w^F_{k_{j_1}}(c) = \varphi(w^E_{j_1}(c-b_{j_1})),$$
this yields that $w^E_{j_2}(-b_{j_2}) = w^E_{j_1}(c-b_{j_1})$. On the other hand, by the above choice of the vertices $w^E_j$ ($1\le j\le n$) and Lemma \ref{lem-uni-rep}, we have $w^E_{j_2}(-b_{j_2}) \neq w^E_{j_1}(c-b_{j_1})$, a contradiction. Therefore, we get that all the vertices $w_{k_j}^{F}$ ($1\le j\le n$) lie on disjoint cycles. By renumbering all the cycles $D^F_j$ ($1\le j\le n$), without loss of generality, we can assume that 
\begin{equation}
\varphi\left(w_{j}^E\right)={ }^{b_j} w_{j}^{F},
\end{equation}
where $w^F_{j}\in (D^F_{k_j})^0$ and  $0 \leq b_j \leq q_{j}^{F}-1$ for all $1\le j\le n$. 

We note that $q_j^{E}$ and $q^F_j$ are, respectively, the smallest positive integers such that ${ }^{q_j^{E}} w_{j}^{E} = w_{j}^{E}$ and ${ }^{q_j^{F}} w_{j}^{F}=w_{j}^F$. Then, we have $w^F_j(b_j) = \varphi(w^E_j)= \varphi({ }^{q_j^{E}}w^E_j) = w^F_j(b_j + q^E_j)$, and so $q^F_j\le q^E_j$. Furthermore, $\varphi(w^E_j)= w^F_j(b_j) = w^F_j(b_j + q^F_j) =  \varphi({ }^{q_j^{F}}w^E_j)$ implies that $w^E_j = { }^{q_j^{F}}w^E_j$, and so $q^E_j\le q^F_j$, that means, 
we obtain that $q_j^{E}=q_j^{F}=:q_j$.

We next claim that
\begin{equation}
	\varphi\left(v_i^{E}\right)={ }^{a_i} v_i^{F}+\sum \limits_{j=1}^n \left(\sum_{k=0}^{q_j-1} b_{k, j}\left({}^{k} w_{j}^{F}\right) \right) 
\end{equation}
where $b_{k,j} \in \mathbb{N}$ and $v_i^F$ is the vertex on $C_i^F$ which is the source of all trails of $F$ starting from $C_i^F$. Indeed, by Lemma \ref{lem-uni-rep}, there exist $c_1, c_2, \ldots, c_m\in \mathbb{Z}$ such that 
$$\varphi\left(v_i^{E}\right)= \sum \limits_{t=1}^{m} \left( \sum \limits_{0 \le k <p_t}^{} a_{k,t} v_t^F(c_t - k) \right) + \sum \limits_{j=1}^{n} \left( \sum \limits_{0 \le k <q_j}^{} b_{k,j} w_j^F(k) \right),$$ where $a_{k,t}, b_{k,j} \in \mathbb{N}$. Consider the $\mathbb{Z}$-order ideal $\langle  w^E_1, \ldots, w^E_n\rangle$ of $T_E$ generated by $w^E_1\ldots, w^E_n$. We get that for every vertex $v$ on the source cycles of $E$, $v(t)\notin \langle  w^E_1, \ldots, w^E_n\rangle$ for all $t\in \mathbb{Z}$. By \cite[Lemma 3.5]{alfi}, the natural inclusion $T_{\sqcup^m_{i=1} C^E_i} \hookrightarrow T_{E}$ descends to an isomorphism $T_E/\langle  w^E_1, \ldots, w^E_n\rangle \cong T_{\sqcup^m_{i=1} C^E_i}\cong \bigoplus^m_{i=1}\mathbb{N}^{p_i}$ is a $\mathbb Z$-cyclic monoid of rank $\sum^m_{i=1}p_i$. 

Since $\varphi$ induces an isomorphism $\bar{\varphi}:  T_E/\langle  w^E_1, \ldots, w^E_n\rangle\rightarrow T_F/\langle  w^F_1, \ldots, w^F_n\rangle$, passing to the quotient we have that $\bar{\varphi}\left(v_i^{E}\right)=\sum \limits_{t=1}^{m} \left( \sum \limits_{0 \le k <p_t}^{} a_{k,t} v_t^F(c_t - k) \right) $. 
Since $v_i^{E}$ is an atom in $T_{\sqcup^m_{i=1} C^E_i}$, $\bar{\varphi} \left(v_{i}^E \right)$ is also an atom in $T_{\sqcup^m_{i=1} C^F_i}$. By Lemma \ref{lem-atom}, there is a unique integer $1\le k_i\le m$ such that $$\bar{\varphi} \left(v_{i}^E \right) = { }^{a_i} v_{k_i}^{F}$$ where $a_i\in \mathbb{N}$ and $v_{k_i}^F\in (C^E_{k_i})^0$ is the source of all trails of $F$ starting from $C_i^F$. Using the same argument which was done with the case of the elements $\{w^F_{k_j}\}^n_{j=1}$ cited above and replacing $\varphi$ by $\bar{\varphi}$, we get that all the vertices $v_{k_i}^{F}$ ($1\le i\le m$) lie on disjoint cycles. By renumbering all the cycles $C^F_j$ ($1\le 1\le m$), without loss of generality, we can assume that 
$$\bar{\varphi} \left(v_{i}^E \right) = { }^{a_i} v_{i}^{F}$$
where $v^F_{i}\in (C^F_{i})^0$ and  $a_i\in \mathbb{N}$ for all $1\le i\le m$, thus showing Equation (7). Since $p_i^E$ and $p_{i}^F$ are, respectively, the smallest natural numbers such that $^{p_i^E}v_i^E = v_i^E$ and $^{p_i^F} v_{i}^F = v_{i}^F $, it follows that $p_i^E =p_i^F := p_i$. 
	
We now prove that $N_{i,j}^E(c) = N_{i,j}^F(c-a_i+b_j)$ for all $1 \le i \le m, 1 \le j \le n$ and $c\in \mathbb{N}$, where $a_{i}$ $(1\le i\le m)$ and $b_{j}\ (1\le j\le n)$ are the integers defined in Equations (7) and (6), respectively. Indeed,
recall that $T_{E}=M_{\bar{E}}$ is the inductive limit of
$$\cdots \longrightarrow \mathbb{N}^{\sum \limits_{i=1}^m p_i + \sum \limits_{j=1}^n q_j} \xrightarrow{A_{E}} \mathbb{N}^{\sum \limits_{i=1}^m p_i + \sum \limits_{j=1}^n q_j} \xrightarrow{A_{E}} \mathbb{N}^{\sum \limits_{i=1}^m p_i + \sum \limits_{j=1}^n q_j} \longrightarrow \cdots$$
Furthermore, since $E$ is a graph of Gelfand-Kirillov dimension in normal form and by renumbering the vertices of $E$, $A_E$ can be written in the form
	\begin{displaymath}
	A_{E}=\left(\begin{array}{cccc|cccc}C_{p_1} & 0 & \ldots & 0   &  *_{1,1}^E & *_{1,2}^E & \ldots & *_{1,n}^E\\ 0 & C_{p_2} & \ldots & 0  & *_{2,1}^E & *_{2,2}^E & \ldots & *_{2,n}^E \\  \ldots & \ldots & \ldots & \ldots & \ldots & \ldots & \ldots & \ldots   \\
	0 & 0 & \ldots &  C_{p_m} & *_{m,1}^E & *_{m,2}^E & \ldots & *_{m,n}^E  \\ \hline 
	0 & 0 &   \ldots  &  0 & C_{q_1} & 0 & \ldots & 0  \\
	0 & 0 &\ldots & 0    & 0 & C_{q_2}& \ldots & 0  \\
	\ldots & \ldots & \ldots & \ldots & \ldots & \ldots & \ldots & \ldots   \\
	
	0 & 0 &\ldots & 0    & 0 & 0& \ldots & C_{q_n}     \end{array}\right),
	\end{displaymath}
where
$$C_{k}=\left(\begin{array}{cccc}
	0 & 1 & & \cdots \\
	& 0 & 1 & \\
	& & \ddots & \vdots \\
	1 & 0 & \cdots &
	\end{array}\right)$$
denotes the adjacency matrix of the cycle of length $k$, and $*_{i,,j}^E \in M_{p_i\times q_j}(\mathbb{N})$ has at most a nonzero row which is the first one for all $1\le i\le m$ and $1\le j\le n$, and so the matrix $A_{E}$ is invertible (as the source and sink cycles have lengths at least 1). 

For each $k\in \mathbb{Z}$, we denote by $\iota_{E}^{k}: \mathbb{N}^{\sum \limits_{i=1}^m p_i + \sum \limits_{j=1}^n q_j} \rightarrow T_{E}$ the canonical inclusion map associated to the inductive limit. Let $x$ and $y \in \mathbb{N}^{\sum \limits_{i=1}^m p_i + \sum \limits_{j=1}^n q_j} $ such that $\iota_E^k(x) = \iota_E^k (y)$. Let  $\iota_E^k(x) = x' $ and  $\iota _E^k(y) = y'$. By Lemma \ref{confuu}, there exists an element $z\in T_E$ such that $x' \to z$ and $y' \to z$. By choosing large enough interger $d$, we have  $z \to z'$ where  $z'  $ can be written as sum of terms in $ \{v(d)\mid  v \in E^0\} $. Then, $x' \to z'$ and $y' \to z'$ imply that  $A_{E}^{d-k}  x = A_{E}^{d-k}  y$, and so $x = y$.   This implies that  $\iota_{E}^{k}: \mathbb{N}^{\sum \limits_{i=1}^m p_i + \sum \limits_{j=1}^n q_j} \rightarrow T_{E}$  is injective for every $k \in \mathbb{Z}$. 

As the copies of $\mathbb{N}$ in $\mathbb{N}^{\sum \limits_{i=1}^m p_i + \sum \limits_{j=1}^n q_j}$ represent the vertices of $E$, these inclusions are related to the $\mathbb{Z}$-action via the formula 
	\begin{equation}
	\alpha_{1} \circ \iota_{E}^{k}=\iota_{E}^{k+1}, 
	\end{equation}
where $\alpha_{1}: T_{E} \rightarrow T_{E}$ denotes the action of $1 \in \mathbb{Z}$. Similar facts hold for the graph $F$. Let $1\le i\le m$ be an integer, choosing $k$ large enough so that $\im(\iota_{F}^{k}) \supseteq \im(\varphi \circ \iota_{E}^{0})$ and $p_1p_2\ldots p_m q_1 q_2 \ldots q_n \mid k-a_i $ , we obtain an injective map
	$$
	C:=\left(\iota_{F}^{k}\right)^{-1} \circ \varphi \circ\left(\iota_{E}^{0}\right): \mathbb{N}^{\sum \limits_{i=1}^m p_i + \sum \limits_{j=1}^n q_j} \longrightarrow \mathbb{N}^{\sum \limits_{i=1}^m p_i + \sum \limits_{j=1}^n q_j}.
	$$
	(In the equation above, we implicitly restrict the codomains of $\varphi$ and $\iota_{F}^{k}$ to the image of $\iota_{F}^{k}$.) 
	We claim that $C A_{E}=A_{F} C$. On one hand,
	$$
	\begin{aligned}
	\iota_{F}^{k+1} \circ C \circ A_{E} & =\iota_{F}^{k+1} \circ\left(\iota_{F}^{k}\right)^{-1} \circ \varphi \circ\left(\iota_{E}^{0}\right) \circ A_{E}  &\\
	& =\alpha_{1} \iota_{F}^{k} \circ\left(\iota_{F}^{k}\right)^{-1} \circ \varphi \circ\left(\iota_{E}^{0}\right) \circ A_{E}  &\text{ (Equation (8))}  \\
	& =\alpha_{1} \circ \varphi \circ\left(\iota_{E}^{0}\right) \circ A_{E} & \\
	& =\varphi \circ \alpha_{1} \circ\left(\iota_{E}^{0}\right) \circ A_{E} & \text{ ($\varphi$ a $\mathbb{Z}$-isomorphism) }  \\
	& =\varphi \circ\left(\iota_{E}^{1}\right) \circ A_{E} & \text{(Equation (8))}  \\
	& =\varphi \circ\left(\iota_{E}^{0}\right) & \text{($T_{E}$ is a direct limit)} .
	\end{aligned}
	$$ 
	On the other hand,
	$$
	\begin{aligned}
	\iota_{F}^{k+1} \circ A_{F} \circ C & =\iota_{F}^{k+1} \circ A_{F} \circ\left(\iota_{F}^{k}\right)^{-1} \circ \varphi \circ \iota_{E}^{0} & \\
	& =\iota_{F}^{k} \circ\left(\iota_{F}^{k}\right)^{-1} \circ \varphi \circ \iota_{E}^{0}  & \text{ $\left(T_{F}\right.$ as a direct limit)} \\
	& =\varphi \circ \iota_{E}^{0} &
	\end{aligned}
	$$ 
	This shows that $\iota_{F}^{k+1} \circ C \circ A_{E}=\varphi \circ \iota_{E}^{0}=\iota_{F}^{k+1} \circ A_{F} \circ C$. Since $\iota_{F}^{k+1}$ is injective, we immediately get that
	\begin{equation}
	C A_{E}=A_{F} C. 
	\end{equation}
	Now, as $C$ is induced from $\varphi$ and $\iota^k$, the above facts about $\varphi\left(v_{i}^{E}\right)$ and $\varphi\left(w_{j}^{E}\right)$ are reflected in $C$. Since $p_1p_2\ldots p_m q_1 q_2 \ldots q_n \mid k-a_i,$ we have   
	$$
	C =  \left(\begin{array}{ccccc|cccc} \tilde{C}_{a_1 - a_i} & 0 & 0 &\ldots  & 0  &  B_{1,1} & B_{1,2} & \ldots & B_{1,n}\\ 0 & \tilde{C}_{a_2  -a_1} &  0 & \ldots & 0  & B_{2,1} & B_{2,2} & \ldots & B_{2,n} \\  
	\ldots & \ldots &  \ldots & \ldots & \ldots & \ldots & \ldots & \ldots & \ldots   \\
	\ldots & \ldots &  \tilde{C}_{a_i - a_i}& \ldots & \ldots & \ldots & \ldots & \ldots & \ldots   \\
	\ldots & \ldots &  \ldots & \ldots & \ldots & \ldots & \ldots & \ldots & \ldots   \\
	0 & 0 &  0& \ldots & \tilde{C}_{a_m - a_1} & B_{m,1} & B_{m,2} & \ldots & B_{m,n}  \\ \hline 
	0 & 0 &  0 &   \ldots  &  0 & \tilde{C}_{b_1 - a_1} & 0 & \ldots & 0  \\
	0 & 0 & 0  &\ldots & 0    & 0 & \tilde{C}_{b_2 - a_1  }& \ldots & 0  \\
	\ldots & \ldots &  \ldots & \ldots & \ldots & \ldots & \ldots & \ldots & \ldots   \\
	
	0 & 0 & 0 &\ldots & 0    & 0 & 0& \ldots & \tilde{C}_{b_n - a_1}    \end{array}\right)
	$$
	where $\tilde{C}_{a_c - a_i}=\left(C_{p_i}\right)^{a_c - a_i}=\sum \limits_{t=1}^{p_i} E_{t, t + a_c - a_i \bmod p_c}$ and $\tilde{C}_{b_j - a_i}=\left(C_{q_j}\right)^{b_j - a_i}=\sum_{t=1}^{q_j} E_{t, t + b_j - a_i \bmod q_j}$ respectively represent the fact that $\varphi\left(v_i^{E}\right)={ }^{a_i} v_i^{F}+\sum \limits_{j=1}^n \left(\sum_{k=0}^{q_j-1} b_{k, j}\left({}^{k} w_{j}^{F}\right) \right)$ and $\varphi \left( w_j^{E}\right)={ }^{b_j} w_j^{F}$, and $B_{r, s}\in M_{p_r\times q_s}(\mathbb{N})$ for all $1\le r\le m$ and $1\le j\le n$. 
	
	For each $1 \leq t \leq q_j$, write $k_{t}^{E}$ for the number of edges starting from $C_i^E$ to  $D_j^E$ and range (the vertex on the sink-cycle equivalent to) $ ^t w_j^{E}$, and similarly for $F$. Then, the upper right block $*_{i,j}^E$  in $A_{E}$ has at most one nonzero row (the first row): $\left(k_{q_j}^E\ k_{1}^{E}\ k_{2}^{E}\ \cdots \ k_{q_j-1}^{E}\right)$.
	
	Notice that Equation (9) implies that	
	$C_{p_i} B_{i,j}+*_{i,j}^{F} \tilde{C}_{b_j - a_i}=*_{i,j}^{E}+B_{i,j} C_{q_j} $ for any $1 \le j \le n$.
	Let $r:= b_j - a_i$. We  then have
	$$
	\begin{array}{lr}
	B_{2, t}+k_{t-r-1}^{F}=k_{t-1}^{E}+B_{1, t-1} & \text { for } 1 \leq t \leq q_j \\
	B_{x+1, t}=B_{x, t-1} & \text { for } 2 \leq x \leq p_i-1,1 \leq t \leq q_j \\
	B_{1, t}=B_{p_i, t-1} & \text { for } 1 \leq t \leq q_j
	\end{array}
	$$
	We conclude that for any $1 \leq t \leq q_j$, we get that
	$$
	\begin{aligned}
	B_{p_i, t-1}=B_{1, t}=B_{2, t+1}+k_{t-r}^{F}-k_{t}^{E} & =B_{3, t+2}+k_{t-r}^{F}-k_{t}^{E}=\cdots \\
	\cdots & =B_{p_i, t+p_i-1}+k_{t-r}^{F}-k_{t}^{E}
	\end{aligned}
	$$
	where the addition on the indices is performed modulo $q_j$.
	Applying the same argument to $B_{p_i, t+p_i-1}$, we obtain that	
	\begin{align*}
	B_{p_i, t-1} & =B_{p_i, t+2 p_i-1}+k_{t-r}^{F}-k_{t}^{E}+k_{t+p_i-r}^{F}-k_{t+p_i}^{E} \\
	& = B_{p_i, t+3p_i-1} + k_{t-r}^{F}-k_{t}^{E}+k_{t+p_i-r}^{F}-k_{t+p_i}^{E} +  k_{t+2p_i-r}^F - k_{t+2p_i}^E \\
	& \ldots \\
	&= B_{p_i, t+lp_i -1}  + \sum \limits_{x=0}^{l-1} k_{t+xp_i - r}^F - \sum \limits_{x=0}^{l-1} k_{t+xp_i }^E  \tag{*}
	\end{align*}
for all $l \ge 1$.	Write $d_{(i,j)}=\operatorname{gcd}(p_i, q_j)$, and write $p_i=c d_{(i, j)}$ and $q_j=d d_{(i, j)}$, where $\operatorname{gcd}(c, d)=1$. Then
	$$
	B_{p_i, t-1}=B_{p_i, t+c q_j-1}=B_{p_i, t+d p_i -1}.
	$$
Now, replacing $l$ by $d$ in Equation (*), we immediately get that
	\begin{equation}
	\sum_{x=0}^{d-1} k_{x p_i+t-r}^{F}=\sum_{x=0}^{d-1} k_{t+x p_i}^{E}
	\end{equation}
for all $1\le t\le q_j$. Observe that for any $x$ we have $x p_i=x c d_{(i, j)} \equiv 0 \bmod d_{(i, j)}$. For each $1\le t\le q_j$, since the addition on the indices is performed modulo $q_j$, we conclude that the right-hand sum of Equation (10) is precisely the number of trails of $E$ starting from $C_i^E$  and ending at a vertex on $D_j^E$ with index $t \pmod{d_{(i, j)}}$, and the left-hand sum of Equation (10) is precisely the number of trails of $F$ starting from $C_i^F$  and ending at a vertex on $D_j^F$ with index $(t-r) \pmod{d_{(i, j)}}$. Therefore, we  have $$N_{i,j}^E(c) = N_{i,j}^F(c -r) = N_{i,j}^F(c-a_i+b_j)$$  for all $1\le i\le m$, $1 \le j \le n$ and $c\in \mathbb{N}$. By Theorem \ref{numtheo-crite-sse1}, we obtain that $A_E\sim_{SSE} A_F$.

(5) $\Longrightarrow$ (1). Assume that $A_E\sim_{SSE} A_F$. By Theorem \ref{willimove}, $E$ can be obtained from $F$ by a sequence of in-splittings, out-splittings,  in-amalgamations, and out-amalgamations. By \cite[Proposition 15]{hazd}, it follows that $L_K(E)$ is graded Morita equivalent to $L_K(F)$, thus finishing the proof.
\end{proof}

Putting together our result with several established results in the literature, we can now relate the notions of Morita theory in algebras and operator algebras for the class of graphs of Gelfand-Kirillov dimension three. The following result is related to the questions on the relationship between the Morita theory of Leavitt path algebras and graph $C^*$-algebras  (see \cite[Section 5.6]{lpabook} and \cite{eilers}), which shows the equivalence of (2) and (3) of \cite[Conjecture 8.8.2]{CortHaz} for  graphs of Gelfand-Kirillov dimension three.

\begin{cor}\label{maintheo-cor1}
Let $E$ and $F$ be essential graphs, where $E$ is a graph of Gelfand-Kirillov dimension three. Let $K$ be an arbitrary field. Then, $L_K(E)$ is graded Morita equivalent to $L_K(F)$ if and only if the graph $C^*$-algebras $C^*(E)$ and $C^*(F)$ are equivariant Morita equivalent.
\end{cor}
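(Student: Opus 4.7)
The plan is to route the equivalence through the symbolic-dynamical condition $A_E\sim_{SSE}A_F$, using Theorem~\ref{mainthm} on the algebraic side and the classical work of Cuntz--Krieger, Bates--Pask, and others on the analytic side. Since $E$ is assumed to have Gelfand-Kirillov dimension three, any graph for which the corresponding invariants agree with those of $E$ will also have Gelfand-Kirillov dimension three (by Theorem~\ref{corcycle1} together with Theorem~\ref{mainthm}), so the main theorem may be applied throughout without further hypothesis on $F$.

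For the forward direction, suppose $L_K(E)$ and $L_K(F)$ are graded Morita equivalent. By Theorem~\ref{mainthm} this is equivalent to $A_E\sim_{SSE}A_F$, hence by Williams' Theorem~\ref{willimove} the graph $F$ can be obtained from $E$ by a finite sequence of in-splittings, out-splittings, in-amalgamations and out-amalgamations. It is a well-known fact in the analytic setting (going back to Cuntz--Krieger~\cite{CunKri80} and formalised through the work of Bates--Pask and others, see the discussion in \cite{CortHaz,ef}) that each of these four graph moves preserves the graph $C^*$-algebra up to equivariant (gauge-action) Morita equivalence. Chaining these equivalences gives $C^*(E)\sim_{\mathbb T\text{-Morita}}C^*(F)$.

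For the converse, assume $C^*(E)$ and $C^*(F)$ are equivariantly Morita equivalent. A gauge-equivariant Morita equivalence induces an order-preserving $\mathbb{Z}[x,x^{-1}]$-module isomorphism $K^{\mathbb{T}}_0(C^*(E))\cong K^{\mathbb{T}}_0(C^*(F))$. The analytic analogue of Theorem~\ref{mmpags} (again part of the Krieger-dimension-group/Wagoner circle of ideas, and recorded in the form we need in~\cite{CortHaz}) identifies this equivariant $K$-group with the Krieger dimension triple $(\Delta_{A_E^t},\Delta_{A_E^t}^+,\delta_{A_E^t})$. Hence Krieger's Theorem~\ref{kriegeror} yields $A_E\sim_{SE}A_F$. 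Since $E$ has Gelfand-Kirillov dimension three, so does $F$ (Theorem~\ref{corcycle1}), and we may apply Theorem~\ref{mainthm} to conclude first that $A_E\sim_{SSE}A_F$ and then that $L_K(E)$ and $L_K(F)$ are graded Morita equivalent.

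The main obstacle in making this plan work is not any new computation but rather the careful citation of the two ``transfer'' results: the $C^*$-algebraic fact that in/out-splittings and amalgamations preserve equivariant Morita equivalence of graph $C^*$-algebras, and the $K$-theoretic fact that equivariant Morita equivalence descends to an order- and $\mathbb{Z}[x,x^{-1}]$-module-isomorphism of $K_0^{\mathbb{T}}$ which coincides with the Krieger dimension group. Both are available in the literature (\cite{CortHaz,eilers,ef}), so the argument amounts to stringing them together with Theorem~\ref{mainthm}, which supplies the crucial step $SE\Rightarrow SSE$ that is not known in general but holds in the Gelfand-Kirillov dimension three setting.
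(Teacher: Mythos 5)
Your proposal is correct and follows essentially the same route as the paper: both directions pass through the shift-equivalence/SSE conditions of Theorem~\ref{mainthm}, with the forward implication using Williams' moves and the Bates--Pask/Eilers--Ruiz results that these moves preserve (equivariant) Morita equivalence of graph $C^*$-algebras, and the converse transferring the equivariant Morita equivalence to an order-preserving $\mathbb{Z}[x,x^{-1}]$-module isomorphism of $K$-groups. The only cosmetic difference is that in the converse you detour through the Krieger dimension triple and Theorem~\ref{kriegeror} to reach shift equivalence, whereas the paper identifies $K_0^{\mathbb{T}}(C^*(G))$ directly with $K_0^{\gr}(L_K(G))$ via the covering graph and invokes condition (2) of Theorem~\ref{mainthm}; these are interchangeable by Theorem~\ref{mmpags} and Corollary~\ref{h99}.
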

\begin{proof}
($\Longrightarrow$). Assume that $L_K(E)$ is graded Morita equivalent to $L_K(F)$. Since $E$ is a graph of Gelfand-Kirillov dimension three and by Theorem \ref{mainthm}, we have $A_E\sim_{SSE} A_F$, where $A_E$ and $A_F$ are the adjacency matrices of $E$ and $F$, respectively. By Theorem \ref{willimove}, $E$ can be obtained from $F$ by a sequence of in-splittings, out-splittings,  in-amalgamations, and out-amalgamations. Then, by \cite[Theorem 3.2 and Corollary 5.4]{pask},  $C^*(E)$ is strongly Morita equivalent to $C^*(F)$. Moreover, \cite[Sections 2.1 and 2.2]{ef} guarantees that this equivalence is equivariant.

($\Longleftarrow$). Assume that the graph $C^*$-algebras $C^*(E)$ and $C^*(F)$ are equivariant Morita equivalent. We then have $K_0^{\mathbb{T}}(C^*(E))\cong K_0^{\mathbb{T}}(C^*(F))$  as order-preserving $\mathbb{Z}[x, x^{-1}]$-modules (see \cite[p. 297]{comb} and \cite[Proposition 2.9.1]{chrisp}). For any finite graph E, there are
canonical order isomorphisms of For a finite graph $G$, there are
canonical order isomorphisms of $\mathbb{Z}[x, x^{-1}]$-modules
\[K_0^{\gr}(L_K(G)) \cong K_0(\overline{G})\cong K_0(C^*(\overline{G}))\cong K_0^{\mathbb{T}}(C^*(G)),\] where $\overline{G}$ is the covering graph of $G$ (see, e.g., \cite[p. 275]{mathann} and \cite[Proof of Theorem A]{eilers2}). Form these notes, we obtain that $K_0^{\gr}(L_K(E)) \cong K_0^{\gr}(L_K(F))$ as order-preserving $\mathbb{Z}[x, x^{-1}]$-modules. By Theorem \ref{mainthm}, we immediately get that $L_K(E)$ is graded Morita equivalent to $L_K(F)$, thus finishing the proof.
\end{proof}

Let $K$ be a field and $A$ a finite dimensional $K$-algebra. We denote by $A$-mod the category of finitely generated left $A$-modules, and denote by $\text{D}^b(A\text{-mod})$ the bounded derived category of $A$-mod. Recall a complex in $\text{D}^b(A\text{-mod})$ is {\it perfect} provided that it is isomorphic to a bounded complex consisting of projective modules.  The full subcategory consisting of perfect complexes is denoted by $\text{perf}(A)$, which is a triangulated subcategory of $\text{D}^b(A$-$\text{mod})$ and is closed under direct summands (see \cite[Lemma 1.2.1]{Buchweitz}). Following \cite{Orlov}, the {\it singularity category} of $A$ is defined to be the quotient triangulated category \[\text{D}_{\text{sg}}(A)=\text{D}^b(A\text{-mod})/\text{perf}(A).\]
The category $\text{D}_{\text{sg}}(A)$ is a triangulated category.
One can show that the global dimension of $A$  is finite if and only if $\text{D}_{\text{sg}}(A)$ is trivial. In \cite{Serre}, Serre showed that an affine variety $V\subseteq \mathbb{C}^n$ is smooth if and only if the algebra $\mathcal{O}(V)$ of polynomial functions on $V$ satisfies $\text{gl.dim}(\mathcal{O}(V)) < \infty$. In this case, $\dim(V) = \text{gl.dim}(\mathcal{O}(V))$. Consequently, $V$ is singular if and only if $\text{D}_{\text{sg}}(\mathcal{O}(V))$ is non-trivial. Hence, from a homological perspective, $\text{D}_{\text{sg}}(\mathcal{O}(V))$ captures the singularity of $V$.

For a non-commutative algebra $A$, the category $\text{D}_{\text{sg}}(A)$ is a homological invariant of $A$, a
measure on how far $A$ is from having finite global dimension.

Let $E$ be a finite graph. The path algebra $KE$ of $E$ over $K$ is defined as follows. As a $K$-vector space, it has a basis given by all the paths $\text{Path}(E)$ in $E$. For two paths $p$ and
$q$, their multiplication is given by the concatenation $pq$ if $s(p) = t(q)$, and it is zero, otherwise.

It was proved by Chen and Yang~\cite{chen}, using the results of P. Smith~\cite{Smith1} and Hazrat \cite{hazd}, that for the finite graphs $E$ and $F$, $\text{D}_{\text{sg}}(KE/J_E^2)$ is triangulated equivalent to $\text{D}_{\text{sg}}(KF/J_E^2)$ if and only if $L_K(E)$ is graded Morita equivalent to $L_K(F)$, where $J_E$ and $J_F$ are  the two-sided ideals of $KE$ and $KF$ generated by paths in $E$ and $F$ of length greater or equal to $1$, respectively. From this note and \cite[Proposition 15(3)]{hazd}, we immediately get that if $\text{D}_{\text{sg}}(KE/J_E^2)$ is triangulated equivalent to $\text{D}_{\text{sg}}(KF/J_F^2)$, then $A_E$ is shift equivalent to $A_F$. Based on Chen and Yang's work \cite{chen} and Bratteli's classification theorem on the ultramatricial algebras $L_K(E)_0$, one expects that the converse might to be true (see \cite[Conjecture 8.8.2]{CortHaz}). The following result shows that the conjecture holds for the class of graphs of Gelfand-Kirillov dimension three.

\begin{cor}\label{maintheo-cor2}Let $E$ and $F$ be essential graphs, where $E$ is a graph of Gelfand-Kirillov dimension three. Let $K$ be an arbitrary field, and let $KE$ and $KF$ be are the path algebras of $E$ and $F$, respectively. Then, $A_E\sim_{SE} A_F$ if and only if $\textnormal{D}_{\text{sg}}(KE/J_E^2)$ is triangulated equivalent to $\textnormal{D}_{\text{sg}}(KF/J_F^2)$.	
\end{cor}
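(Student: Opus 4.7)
The plan is to reduce the corollary directly to Theorem~\ref{mainthm} combined with the Chen--Yang theorem (and the supporting results of P.~Smith and Hazrat) recalled in the paragraph immediately preceding the statement. The key point is that Theorem~\ref{mainthm} has already upgraded shift equivalence to graded Morita equivalence of the associated Leavitt path algebras whenever one of the graphs has Gelfand--Kirillov dimension three, so the corollary becomes a bridge between the algebraic invariant ($L_K(E)$ up to graded Morita) and the triangulated invariant ($\text{D}_{\text{sg}}(KE/J_E^2)$ up to triangulated equivalence).

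For the implication ($\Longrightarrow$), I would start from $A_E \sim_{SE} A_F$. Since $E$ is a graph of Gelfand--Kirillov dimension three, Theorem~\ref{mainthm} (equivalence of (1) and (4)) gives that $L_K(E)$ and $L_K(F)$ are graded Morita equivalent. Applying the Chen--Yang theorem \cite{chen} (as recalled in the paragraph before the statement) then yields a triangulated equivalence $\text{D}_{\text{sg}}(KE/J_E^2)\simeq \text{D}_{\text{sg}}(KF/J_F^2)$.

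For the converse implication ($\Longleftarrow$), I would not need the Gelfand--Kirillov dimension hypothesis at all: if the singularity categories are triangulated equivalent, then by Chen--Yang the Leavitt path algebras $L_K(E)$ and $L_K(F)$ are graded Morita equivalent. By \cite[Proposition~15(3)]{hazd} (or equivalently by the equivalence (1)$\Longleftrightarrow$(2) of Theorem~\ref{mainthm} together with Corollary~\ref{h99}), graded Morita equivalence of the Leavitt path algebras induces an order-preserving $\mathbb{Z}[x,x^{-1}]$-module isomorphism $K_0^{\gr}(L_K(E))\cong K_0^{\gr}(L_K(F))$, and Corollary~\ref{h99} then gives $A_E\sim_{SE} A_F$.

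There is no real obstacle here; the content is essentially bookkeeping. The one place that requires care is making sure the direction ``shift equivalence $\Longrightarrow$ triangulated equivalence of singularity categories'' genuinely uses the Gelfand--Kirillov dimension three hypothesis on $E$ (it does, through Theorem~\ref{mainthm}, since in general shift equivalence is strictly weaker than strong shift equivalence and hence a priori weaker than graded Morita equivalence of Leavitt path algebras), while the opposite direction does not. Once both chains of implications are written down, the corollary follows immediately.
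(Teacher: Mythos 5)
Your proposal is correct and matches the paper's own argument, which likewise deduces the corollary immediately from the Chen--Yang equivalence (triangulated equivalence of the singularity categories $\Leftrightarrow$ graded Morita equivalence of the Leavitt path algebras) combined with the equivalence of (1) and (4) in Theorem~\ref{mainthm}. Your additional observation that the converse direction does not require the Gelfand--Kirillov dimension three hypothesis is accurate but does not change the substance of the proof.
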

\begin{proof} 
It immediately follows from Chen and Yang's result cited above and Theorem \ref{mainthm}.
\end{proof}	

\section{Acknowledgements}
The first author and the third author were supported by the Vietnam Academy of Science and Technology.  The second author acknowledges Australian Research Council Discovery Project DP23010318. 





\bigskip
\bigskip

\end{document}